\documentclass[11pt]{amsart}

\usepackage[english]{babel}
\usepackage[T1]{fontenc}

\usepackage[utf8]{inputenc}
\usepackage{amsmath,amssymb,mathrsfs}
\usepackage{amscd}
\usepackage{pgf,tikz}

\usepackage{hyperref}

\numberwithin{equation}{section}
\newtheorem{theo}{Theorem}[section]
\newtheorem{cor}{Corollary}[section]
\newtheorem{cnj}{Conjecture}[section]
\newtheorem{prop}{Proposition}[section]
\newtheorem{lem}{Lemma}[section]

\theoremstyle{remark} 
\newtheorem{rem}{\textbf{Remark}}[section]
\numberwithin{figure}{section}

\newcommand{\bn}{{\mathbb{N}}}
\newcommand{\bz}{{\mathbb{Z}}}
\newcommand{\brrr}{{\mathbb{R}}}
\newcommand{\bc}{{\mathbb{C}}}

\newcommand{\sqq}{{\textbf{\textup{S}}_q}}
\newcommand{\sd}{{\textbf{\textup{S}}_2}}
\newcommand{\sinf}{{\textbf{\textup{S}}_\infty}}
\newcommand{\one}{{\bf 1}}

\newcommand{\im}{\operatorname{Im}}
\newcommand{\re}{\operatorname{Re}}


\newcommand{\xp}{x_\perp}



\begin{document}

\title[A criterion for the existence of non-real eigenvalues]
{A criterion for the existence of non-real eigenvalues for a Dirac operator}

\author{Diomba \textsc{Sambou}}

\address{Departamento de Matem\'aticas, Facultad de Matem\'aticas,
 Pontificia Universidad Cat\'olica de Chile, Vicu\~na Mackenna 4860, 
 Santiago de Chile}
 
\email{disambou@mat.uc.cl}

\thanks{This work is partially supported by the Chilean 
Program \textit{N\'ucleo Milenio de F\'isica Matem\'atica
RC$120002$}. The author wishes to express his thanks to R. 
Tiedra de Aldecoa for several helpful comments during the
preparation of the paper, and to R. L. Frank for bringing 
to his attention the reference \cite{cuen}.}

\subjclass{Primary 35P20; Secondary 81Q12, 35J10}

\keywords{Dirac operators, complex perturbations, 
discrete spectrum, non-real eigenvalues}


\begin{abstract}
The aim of this work is to explore the discrete spectrum 
generated by complex perturbations in $ L^{2}(\brrr^3,\bc^4)$ 
of the $3d$ Dirac operator $\alpha \cdot 
(-i\nabla - \textbf{A}) + m \beta$ with variable magnetic 
field. Here, $\alpha := (\alpha_1,\alpha_2,\alpha_3)$ and 
$\beta$ are $4 \times 4$ Dirac matrices, and $m > 0$ is 
the mass of a particle. We give a simple criterion for the 
potentials to generate discrete spectrum near $\pm m$. In 
case of creation of non-real eigenvalues, this criterion 
gives also their location.
\end{abstract}

\maketitle

\section{Introduction}\label{s1}

In this paper, we consider a Dirac operator
$D_m(b,V)$ defined as follows. Denoting 
$x = (x_1,x_2,x_3)$ the usual variables of 
$\brrr^3$, let 
\begin{equation}
\textbf{B} = (0,0,b)
\end{equation} 
be a nice scalar magnetic field with constant 
direction such that $b= b(x_1,x_2)$ 
is \textit{an admissible magnetic field}. That is, 
there exists a constant $b_{0} > 0$ satisfying
\begin{equation} 
b(x_1,x_2) = b_{0} + \tilde{b}(x_1,x_2),
\end{equation} 
where $\tilde{b}$ is a function such that 
the Poisson equation 
$\Delta \tilde{\varphi} = \tilde{b}$
admits a solution $\tilde{\varphi} 
\in C^{2}(\mathbb{R}^{2})$ verifying 
$\sup_{(x_1,x_2) \in \mathbb{R}^{2}} \vert 
D^{\alpha} \tilde{\varphi}(x_1,x_2) \vert < 
\infty$, $\alpha \in \mathbb{N}^{2}$, $\vert 
\alpha \vert \leq 2$. Define on $\brrr^2$ 
the function $\varphi_0$ by $\varphi_{0} 
(x_1,x_2):= \frac{1}{4}b_{0}(x_1^2 + x_2^2)$
and set
\begin{equation}  
\varphi (x_1,x_2) := \varphi_{0} (x_1,x_2) + 
\tilde{\varphi} (x_1,x_2).
\end{equation}
We obtain a magnetic potential 
$\textbf{A} : \brrr^3 \longrightarrow \brrr^3$ 
generating the magnetic field $\textbf{B}$ 
(i.e. $\textbf{B} = \text{curl} \hspace{0.6mm} 
\textbf{A}$) by setting 
\begin{equation}
\begin{split}
A_{1}(x_1,x_2,x_3) & = A_{1}(x_1,x_2) = - \partial_{x_{2}} 
\varphi (x_1,x_2), \\
A_{2}(x_1,x_2,x_3) & = A_{2}(x_1,x_2) = \partial_{x_{1}}
\varphi (x_1,x_2), \\
A_{3}(x_1,x_2,x_3) & = 0.
\end{split}
\end{equation}
Then, for a $4 \times 
4$ complex matrix $V = \big\lbrace V_{\ell 
k}(x) \big\rbrace_{\ell,k = 1}^4$, the Dirac 
operator $D_m(b,V)$ acting on $L^{2}(\brrr^3) 
:= L^{2}(\brrr^3,\bc^4)$ is defined by 
\begin{equation}\label{eq1,7}
D_m(b,V) := \alpha \cdot (-i\nabla - \textbf{A}) 
+ m \beta  + V,
\end{equation}
where $m > 0$ is the mass of a particle. Here,
$\alpha = (\alpha_1,\alpha_2,\alpha_3)$ and 
$\beta$ are the Dirac matrices defined by the 
following relations:
\begin{equation}\label{eq1,8}
\alpha_{j}\alpha_{k} + \alpha_{k}\alpha_{j} = 
2\delta_{jk}\textbf{1}, \quad \alpha_{j} 
\beta + \beta\alpha_{j} = \textbf{0}, 
\quad \beta^{2} = \textbf{1}, \quad j, k 
\in \lbrace 1,2,3 \rbrace,
\end{equation}
$\delta_{jk}$ being the Kronecker symbol 
defined by $\delta_{jk} = 1$ if $j = k$ and 
$\delta_{jk} = 0$ otherwise, (see e.g. 
the book \cite[Appendix of Chapter 1]{tha} 
for other possible representations).
\\

\noindent
For $V = 0$, it is known that the spectrum of 
$D_m(b,0)$ is $(-\infty,-m] \cup [m,+\infty)$ 
(see for instance \cite{tda,diom}). Throughout 
this paper, we assume that $V$ satisfies
\\

\noindent
\textbf{Assumption 1.1.} $V_{\ell k}(x) \in
\bc$ for $1 \leq \ell,k \leq 4$ with
\begin{equation}\label{eq1,17}
\begin{split}
& \bullet \hspace{0.6mm}  0 \not\equiv V_{\ell k} \in 
L^{\infty} (\brrr^3), \, \vert V_{\ell k}(x) 
\vert \lesssim F_{\perp} (x_1,x_2) \hspace{0.5mm} 
G(x_3), \\
& \bullet \hspace{0.6mm} F_{\perp} \in \bigl( 
L^\frac{q}{2} \cap L^\infty \bigr) \big( 
\brrr^{2},\brrr_{+}^{\ast} \big) \hspace{0.5mm} 
\textup{for some $q \geq 4$}, \\
& \bullet \hspace{0.6mm} 0 < G(x_3) \lesssim 
\langle x_3 \rangle^{-\beta}, \beta > 3, 
\hspace{0.4mm} \textup{where} \hspace{0.4mm} 
\langle y \rangle := \sqrt{1 + \vert y 
\vert^2} \hspace{1mm} \textup{for} 
\hspace{1mm} y \in \brrr^d.
\end{split}
\end{equation}

\begin{rem}
Assumption $1.1$ is naturally satisfied 
by matrix-valued perturbations $V : \brrr^3 
\rightarrow \bc^4$ (not necessarily 
Hermitian) such that
\begin{equation}\label{eqe}
\vert V_{\ell k}(x) \vert \lesssim 
\langle (x_1,x_2) \rangle^{-\beta_{\perp}} 
\langle x_3 \rangle^{-\beta}, 
\quad \beta_{\perp} > 0, \quad  \beta > 3, 
\quad 1 \leq \ell,k \leq 4.
\end{equation}
We also have the matrix-valued perturbations 
$V : \brrr^3 \rightarrow \bc^4$ (not 
necessarily Hermitian) such that
\begin{equation}\label{eq1,191}
\vert V_{\ell k}(x) \vert \lesssim \langle 
x \rangle^{-\gamma}, 
\quad \gamma > 3, \quad 1 \leq \ell,k \leq 4.
\end{equation}
Indeed, it follows from \eqref{eq1,191} that 
\eqref{eqe} holds 
with any $\beta \in (3,\gamma)$ and 
$\beta_{\perp} = \gamma - \beta > 0$.
\end{rem}

\noindent
Since we will deal with non-self-adjoint 
operators, it is useful to precise the 
notion used of discrete and essential 
spectrum of an operator acting on a 
separable Hilbert space $\mathscr{H}$. 
Consider $S$ a closed such operator. Let 
$\mu$ be an isolated 
point of ${\rm sp}\,(S)$, and $\mathscr{C}$ 
be a small positively oriented circle 
centred at $\mu$, containing $\mu$ as the 
only point of ${\rm sp}\,(S)$. The point 
$\mu$ is said to be a discrete 
eigenvalue of $S$ if it's algebraic 
multiplicity
\begin{equation}\label{eq1,19}
\text{mult}(\mu) := \text{rank} \left( 
\frac{1}{2i\pi} \int_{\mathscr{C}} 
(S - z)^{-1}dz \right)
\end{equation}
is finite. The discrete spectrum of 
$S$ is then defined by 
\begin{equation}\label{eq1,20}
{\rm sp}\,_{\text{\textbf{\textup{disc}}}}(S) 
:= \big\lbrace \mu \in {\rm sp}\,(S) : \mu 
\hspace*{0.1cm} \textup{is a discrete 
eigenvalue of $S$} \big\rbrace.
\end{equation}
Notice that the geometric multiplicity 
$\text{dim} \big( \text{Ker}(S - \mu) \big)$ 
of $\mu$ is such that $\text{dim} \big( 
\text{Ker}(S - \mu) \big) \leq 
\text{mult}(\mu)$. Equality holds if $S$ is 
self-adjoint. 
The essential spectrum of $S$ is defined by
\begin{equation}\label{eq1,21}
{\rm sp}\,_{\text{\textbf{\textup{ess}}}}(S) 
:= \big\lbrace \mu \in \bc : \textup{$S - \mu$ 
is not a Fredholm operator} \big\rbrace.
\end{equation} 
It's a closed subset of ${\rm sp}\,(S)$.
\\

\noindent
Under Assumption $1.1$, we show (see 
Subsection \ref{s4,1}) that $V$ is relatively 
compact with respect to $D_m(b,0)$. Therefore, 
according to the Weyl criterion on the 
invariance of the essential spectrum, we have 
\begin{equation}
\begin{split}
{\rm sp}\,_{\text{\textbf{ess}}} & \big( 
D_m(b,V) \big) = {\rm sp}\,_{\text{\textbf{ess}}} 
\big( D_m(b,0) \big) \\
& = {\rm sp}\, \big( D_m(b,0) 
\big) = (-\infty,-m] \cup [m,+\infty).
\end{split}
\end{equation}
However, $V$ may generate complex eigenvalues 
(or discrete spectrum) that can only accumulate 
on $(-\infty,-m] \cup [m,+\infty)$ 
(see \cite[Theorem 2.1, p. 373]{goh}). The 
situation near $\pm m$ is the most interesting 
since they play the role of spectral thresholds 
of this spectrum. 
For the quantum Hamiltonians, many studies on 
the distribution of the discrete spectrum near 
the essential spectrum have been done for 
self-adjoint perturbations, see for instance 
\cite[Chap. 11-12]{iv}, \cite{pus,sob,tam,roz,
diom,tda} and the references therein.
Recently, there has been an increasing 
interest in the spectral theory of 
non-self-adjoint differential operators. 
We quote for instance the papers 
\cite{wan,fra,bru1,bor,dem,demu,han,gol,dio}, 
see also the references therein. In most of 
these papers, (complex) eigenvalues estimates 
or Lieb-Thirring type inequalities are 
established. However, the problem of the 
existence and the localisation of the 
complex eigenvalues near the essential 
spectrum of the operators is not addressed.
We can think that this is probably due to the 
technical difficulties caused by the 
non-self-adjoint aspect of the perturbation.
By the same time, there are few results 
concerning non-self-adjoint Dirac operators, 
\cite{syr,syr1,cue,dub,cuen}. In this article, 
we will examine the problem 
of \textit{the existence}, \textit{the 
distribution} and \textit{the localisation} 
of the non-real eigenvalues of the Dirac 
operator $D_m(b,V)$ near $\pm m$. The case 
of the non-self-adjoint Laplacian 
$-\Delta + V(x)$ in $L^2(\brrr^n)$, 
$n \geq 2$, \textit{near the origin}, is 
studied by Wang in \cite{wan}. In particular, 
he proves that for slowly decaying potentials,
$0$ is the only possible accumulation point 
of de complex eigenvalues and if $V(x)$ decays
more rapidly the $\vert x \vert^{-2}$, then 
there are no clusters of eigenvalues near the
points of $[0,+\infty)$.
Actually, in Assumption $1.1$, the 
condition
\begin{equation}\label{eq1,22}
0 < G(x_3) \lesssim 
\langle x_3 \rangle^{-\beta}, \quad \beta 
> 3, \quad x_3 \in \brrr,
\end{equation}
is required in such a way we include 
perturbations decaying polynomially (as 
$\vert x_3 \vert \longrightarrow + \infty$) 
along the direction of the magnetic field. 
In more restrictive setting, if we replace 
\eqref{eq1,22} by perturbations decaying 
exponentially along the direction of the 
magnetic field, i.e. satisfying
\begin{equation}\label{eq1,23}
0 < G(x_3) \lesssim 
e^{-\beta \langle x_3 \rangle}, \quad \beta 
> 0, \quad x_3 \in \brrr,
\end{equation}
then our third main result (Theorem 
\ref{t2,4}) can be improved to get non-real
eigenvalues asymptotic behaviours near 
$\pm m$. However, this topic is beyond 
these notes in the sense that it requires 
the use of resonance approach, by defining in 
Riemann surfaces the resonances of the 
non-self-adjoint operator $D_m(b,V)$ near 
$\pm m$, and it will be considered 
elsewhere. Here, we extend and generalize 
to non-self-adjoint matrix case the methods 
of \cite{diom,bon}. And, the problem
studied is different.
Moreover, due to the structure of the 
essential spectrum of the Dirac operator 
considered here (symmetric with respect to 
the origin), technical difficulties appear.
In particular, these difficulties are 
underlying to the choice of the complex 
square root and the parametrization 
of the discrete eigenvalues in a neighbourhood 
of $\pm m$ (see \eqref{eq2,7}, Remarks \ref{r2,1} 
and \ref{r5,1}). To prove our main 
results, we reduce the study of the 
complex eigenvalues to 
the investigation of zeros of holomorphic
functions. This allows us to essentially use
complex analysis methods to solve our
problem. Firstly, we obtain sharp upper 
bounds on the number of complex
eigenvalues in small annulus near 
$\pm m$ (see Theorem \ref{t2,1}). Secondly, 
under appropriate hypothesis, we prove the 
absence of non-real eigenvalues in certain 
sectors adjoining $\pm m$ (see Theorem \ref{t2,3}). 
By this way, we derive from Theorem \ref{t2,3} 
a relation between the properties of the 
perturbation $V$ and the finiteness of the 
number of non-real eigenvalues of $D_m(b,V)$ 
near $\pm m$ (see Corollary \ref{c1}). Under 
additional conditions, we prove lower bounds 
implying the existence of non-real eigenvalues 
near $\pm m$ (see Theorem \ref{t2,4}). 
In more general setting, we conjecture a 
criterion of non-accumulation of the discrete 
spectrum of $D_m(b,V)$ near $\pm m$ 
(see Conjecture \ref{cj2,1}). This conjecture 
is in the spirit of the Behrndt conjecture 
\cite[Open problem]{ber} on Sturm-Liouville 
operators. More precisely, he says the 
following: there exists non-real eigenvalues 
of singular indefinite Sturm-Liouville 
operators accumulate to the real axis 
whenever the eigenvalues of the corresponding
definite Sturm-Liouville operator accumulate
to the bottom of the essential spectrum 
from below.
\\

\noindent
The paper is organized as follows.
We present our main results in Section \ref{s2}. 
In Section \ref{s3}, we estimate the Schatten-von 
Neumann norms (defined in Appendix A) of the
(weighted) resolvent of $D_m(b,0)$. We also reduce 
the study of the discrete spectrum to that of  
zeros of holomorphic functions. In Section 
\ref{s4}, we give a suitable decomposition 
of the (weighted) resolvent of $D_m(b,0)$. Section
\ref{s5} is devoted to the proofs of the main results. 
Appendix A is a summary on basic properties 
of the Schatten-von Neumann classes. In Appendix B,
we briefly recall the notion of the index of a finite 
meromorphic operator-valued function along a 
positive oriented contour.

\section{Formulation of the main results}\label{s2}

In order to state our results, some 
additional notations are needed. Let $p = p(b)$ 
be the spectral projection of $L^2 ( \brrr^2)$ 
onto the (infinite-dimensional) kernel of
\begin{equation}\label{eqh}
H_\perp^- := (-i\partial_{x_1} - A_1)^2 + (-i\partial_{x_1} 
- A_2)^2 - b,
\end{equation}
(see \cite[Subsection 2.2]{raik}).
For a complex $4 \times 4$ matrix $M = M(x)$, 
$x \in \brrr^3$, $\vert M \vert$ 
defines the multiplication  operator in 
$L^2(\brrr^3)$ by the matrix $\sqrt{M^\ast M}$.
Let $\textbf{\textup{V}}_{\pm m}$ be the 
multiplication operators by the functions 
\begin{equation}\label{eq2,2}
\begin{split}
\displaystyle \textbf{\textup{V}}_m(x_1,x_2) & = 
\frac{1}{2} \int_\brrr v_{11} (x_1,x_2,x_3) 
dx_3, \\ 
\displaystyle \textbf{\textup{V}}_{-m}
(x_1,x_2) & = \frac{1}{2} \int_\brrr v_{33}
(x_1,x_2,x_3) dx_3,
\end{split}
\end{equation}
where $v_{\ell k}$, $1 \leq \ell,k \leq 4$, 
are the coefficients of the matrix $\vert V 
\vert$. Clearly, Assumption $1.1$ implies that 
\begin{equation}
0 \leq \textbf{\textup{V}}_{\pm m}(x_1,x_2)
\lesssim \sqrt{F_\perp(x_1,x_2)},
\end{equation} 
since $F_\perp$ is bounded. This together with
\cite[Lemma 2.4]{raik} give that the 
self-adjoint Toeplitz operators 
$p \textbf{\textup{V}}_{\pm m} p$ are compacts. 
Defining 
\begin{equation}
\bc_\pm := \big\lbrace z \in \bc : \pm 
\im(z) > 0 \big \rbrace,
\end{equation} 
we will adopt the following choice of 
the complex square root
\begin{equation}\label{eq2,7}
\bc \setminus (-\infty,0] \overset{\sqrt{\cdot}}
{\longrightarrow} \bc_+.
\end{equation}
Let $\eta$ be a fixed constant such that 
$0 < \eta < m$. For $\widetilde{m} 
\in \lbrace \pm m \rbrace$, we set
\begin{equation}\label{eq2,8}
\mathcal{D}_{\widetilde{m}}^\pm (\eta) 
:= \big\lbrace z \in \bc_\pm : 0 < \vert z - 
\widetilde{m} \vert < \eta \big\rbrace.
\end{equation}
If $0 < \gamma < 1$ and $0 < \epsilon < 
\min \left( \gamma,\frac{\eta(1 - \gamma)}{2m} 
\right)$, we define the domains
\begin{equation}\label{eq2,10}
\mathcal{D}_\pm^\ast (\epsilon) 
:= \big\lbrace k \in \bc_\pm : 0 < \vert k 
\vert < \epsilon : \re(k) > 0 \big\rbrace.
\end{equation}
Note that $0 < \epsilon < \eta/m$. Actually, 
the singularities of the resolvent 
of $D_m(b,0)$ at $\pm m$  are induced by 
those of the resolvent of the one-dimensionnal 
Laplacian $-\partial_{x_3}^2$ at zero 
(see \eqref{eq4,2}-\eqref{eq4,4}). Therefore,
the complex eigenvalues $z$ of $D_m(b,V)$ near 
$\pm m$ are naturally parametrized by 
\begin{equation}\label{eq2,11}
\begin{split}
\bc \setminus & {\rm sp}\, \big( D_m(b,0) \big) 
\ni z = z_{\pm m}(k) := \frac{\pm m (1 + k^2)}{1 
- k^2} \\
& \Leftrightarrow k^2 = \frac{z \mp m}{z 
\pm m} \in \bc \setminus [0,+\infty).
\end{split}
\end{equation}

\begin{rem}\label{r2,1}
\textbf{(i)} Observe that 
\begin{equation}\label{eq2,111}
\bc \setminus {\rm sp}\,\big( D_m(b,0) \big) \ni z 
\longmapsto \Psi_\pm (z) = \frac{z \mp m}{z \pm m}
\in \bc \setminus [0,+\infty)
\end{equation}
are Möbius transformations with inverses  
$\Psi_\pm^{-1}(\lambda) = \frac{\pm m (1 + 
\lambda)}{1 - \lambda}$.

\textbf{(ii)} For any $k \in \bc 
\setminus \lbrace \pm 1 \rbrace$, we have
\begin{equation}\label{eq2,12}
z_{\pm m}(k) = \pm m \pm 
\frac{2mk^2}{1 - k^2} \quad \text{and} \quad 
\im \big( z_{\pm m}(k) \big) = 
\pm \frac{2m \im(k^2)}{\vert 1 - k^2 \vert^2}.
\end{equation}

\textbf{(iii)} According to \eqref{eq2,12},
$\pm \im \big( z_m(k) \big) > 0$ if and 
only if $\pm \im(k^2) > 0$. Then, it is easy 
to check that any $z_m(k) \in \bc_\pm$ is 
respectively associated to a unique 
$k \in \bc_\pm \cap \big\lbrace k \in \bc: 
\re(k) > 0 \big\rbrace$. Moreover,
\begin{equation}\label{eq2,13}
\textup{$z_m(k) \in \mathcal{D}_{m}^\pm (\eta)$ 
\textup{whenever} $k \in \mathcal{D}_\pm^\ast 
(\epsilon)$}.
\end{equation}

\textbf{(iv)} Similarly, according to 
\eqref{eq2,12}, we have $\pm \im \big( z_{-m}(k) 
\big) > 0$ if and only if $\mp \im(k^2) > 0$. 
Then, any 
$z_{-m}(k) \in \bc_\pm$ is respectively 
associated to a unique $k \in \bc_\mp \cap 
\big\lbrace k \in \bc: \re(k) > 0 \big\rbrace$. 
Furthermore,
\begin{equation}\label{eq2,14}
\textup{$z_{-m}(k) \in \mathcal{D}_{-m}^\pm (\eta)$ 
\textup{whenever} $k \in \mathcal{D}_\mp^\ast 
(\epsilon)$}.
\end{equation}
\end{rem}

\noindent
In the sequel, to simplify the notations,
we set
\begin{equation}\label{eq2,140}
\begin{split}
& {\rm sp}\,_{\textup{\textbf{disc}}}^+ \big( 
D_m(b,V) \big) := {\rm sp}\,_{\textup{\textbf{disc}}} 
\big( D_m(b,V) \big) \cap \mathcal{D}_{\pm m}^+ 
(\eta), \\
& {\rm sp}\,_{\textup{\textbf{disc}}}^- \big( 
D_m(b,V) \big) := {\rm sp}\,_{\textup{\textbf{disc}}} 
\big( D_m(b,V) \big) \cap \mathcal{D}_{\pm m}^- 
(\eta).
\end{split}
\end{equation}

\noindent
We can now state our first main result. 

\begin{theo}[\textbf{Upper bound}]\label{t2,1}
Assume that Assumption 1.1 holds with $m > \Vert V \Vert $ small enough. 
Then, we have
\begin{equation}\label{eq2,15}
\begin{split}
\small{\displaystyle \sum_{\substack{z_{\pm m}(k) 
\hspace{0.5mm} \in \hspace{0.5mm} 
{\rm sp}\,_{\textup{\textbf{disc}}}^+ \big( D_m(b,V) 
\big) \\ k \hspace{0.5mm} \in \hspace{0.5mm} 
\Delta_\pm}}} & 
\small{\textup{mult} \big( z_{\pm m}(k) \big)
\quad + \sum_{\substack{z_{\pm m}(k) 
\hspace{0.5mm} \in \hspace{0.5mm} 
{\rm sp}\,_{\textup{\textbf{disc}}}^- \big( D_m(b,V) 
\big) \\ k \hspace{0.5mm} \in \hspace{0.5mm} 
\Delta_\mp}} 
 \textup{mult} \big( z_{\pm m}(k) \big)} \\
& \small{= \mathcal{O} \Big( 
\textup{Tr} \hspace{0.4mm} \one_{(r,\infty)} \big( 
p \textbf{\textup{V}}_{\pm m} 
p \big) \vert \ln r \vert \Big) + \mathcal{O}(1),}
\end{split}
\end{equation}
for some $r_{0} > 0$ small enough and any $r > 0$ with 
$r < r_{0} < \sqrt{\frac{3}{2}}r$,  
where $\textup{mult} \big( z_{\pm m}(k) \big)$ is 
defined by \eqref{eq1,19} and
$$
\Delta_\pm := \left\lbrace r < \vert k 
\vert < 2r : \vert \re(k) \vert > \sqrt{\nu} : 
\vert \im(k) \vert > \sqrt{\nu} : 0 < \nu \ll 1\right\rbrace \cap 
\mathcal{D}_\pm^\ast(\epsilon).
$$
\end{theo}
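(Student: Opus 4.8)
The plan is to reduce the eigenvalue counting to a count of zeros of a holomorphic function built from the Birman--Schwinger principle, and then apply a Jensen-type estimate adapted to the half-disk/annulus geometry. First I would recall (from Section \ref{s3}) that $z = z_{\pm m}(k)$ is a discrete eigenvalue of $D_m(b,V)$ near $\pm m$ if and only if $1$ is an eigenvalue of the Birman--Schwinger operator $\vert V \vert^{1/2} \bigl( D_m(b,0) - z \bigr)^{-1} \widetilde{V}$, where $\widetilde V = \vert V\vert^{-1/2} V$; equivalently $k \in \mathcal{D}_\pm^\ast(\epsilon)$ (resp.\ $\mathcal{D}_\mp^\ast(\epsilon)$) is a zero, with multiplicity equal to $\textup{mult}(z_{\pm m}(k))$ via the index of a finite-meromorphic operator-valued function (Appendix B), of a holomorphic function $\mathscr{D}_\pm(k) := \det_{\lceil q \rceil}\bigl( I - \mathcal{T}_\pm(k) \bigr)$, where $\mathcal{T}_\pm(k)$ is the (Schatten-regularized) Birman--Schwinger operator pulled back through the parametrization \eqref{eq2,11} and $\det_{\lceil q\rceil}$ is the regularized determinant of the appropriate Schatten class $\textbf{\textup{S}}_{\lceil q\rceil}$.

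Next I would use the decomposition of the weighted resolvent from Section \ref{s4}: near $k = 0$ one isolates the singular part coming from the one-dimensional resolvent $(-\partial_{x_3}^2 - k^2\,(\text{something}))^{-1}$ tensored with the Landau projection $p$, so that $\mathcal{T}_\pm(k) = \frac{1}{k}\, a(k)\, p\, \textbf{\textup{V}}_{\pm m}\, p \otimes (\cdot) + b(k)$, with $a(k), b(k)$ holomorphic and bounded in Schatten norm on the relevant half-disk (this is where the Schatten bounds of Section \ref{s3} and the hypothesis $q \ge 4$ are used, and where $m > \Vert V\Vert$ small enough guarantees the remainder terms are genuinely controlled). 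From this one derives the two-sided bound
\begin{equation}\label{eqpf}
\ln \bigl\vert \mathscr{D}_\pm(k) \bigr\vert \le C\, \textup{Tr}\,\one_{(r,\infty)}\bigl( p\,\textbf{\textup{V}}_{\pm m}\,p \bigr)\, \vert \ln r\vert + C
\end{equation}
uniformly for $\vert k\vert$ of order $r$ in the sector defining $\Delta_\pm$, together with a lower bound $\ln\vert \mathscr{D}_\pm(k_0)\vert \ge -C$ at a suitably chosen reference point $k_0$ with $\vert k_0\vert \asymp r$ and $\re(k_0), \im(k_0)$ comparable to $\vert k_0\vert$ (so that $z_{\pm m}(k_0)$ is well inside the resolvent set and $\mathcal{T}_\pm(k_0)$ has small norm). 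The constants here absorb the $\mathcal O(1)$ term, which also accounts for the finitely many zeros that could in principle sit on the artificial boundary pieces $\vert \re(k)\vert = \sqrt\nu$, $\vert\im(k)\vert = \sqrt\nu$.

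Finally I would invoke a Jensen-type inequality for holomorphic functions on a domain conformally equivalent to a disk — concretely, map the region $\{ r < \vert k\vert < 2r\} \cap \mathcal{D}_\pm^\ast(\epsilon)$ into a fixed annulus/disk by an explicit conformal map whose distortion is uniformly bounded in $r$, push forward $\mathscr{D}_\pm$, and apply the standard bound: the number of zeros in the inner region is at most a constant times the difference of $\ln\vert\mathscr{D}_\pm\vert$ between its boundary maximum and its value at $k_0$. Combining this with \eqref{eqpf} and the lower bound yields exactly the right-hand side of \eqref{eq2,15}, after summing the contributions from the $+$ and $-$ half-planes (the parametrization \eqref{eq2,13}–\eqref{eq2,14} matching $\mathcal{D}_{\pm m}^\pm(\eta)$ to $\mathcal{D}_\pm^\ast(\epsilon)$ or $\mathcal{D}_\mp^\ast(\epsilon)$ respectively). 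The main obstacle I anticipate is twofold: first, getting the logarithmic factor $\vert\ln r\vert$ with the correct constant requires carefully tracking how the singular factor $\frac1k$ interacts with the regularized determinant and the eigenvalue-counting function $\textup{Tr}\,\one_{(r,\infty)}(p\textbf{\textup{V}}_{\pm m}p)$ — essentially one needs that the number of eigenvalues of $\mathcal{T}_\pm(k)$ exceeding a threshold is governed, up to $\mathcal O(1)$, by that of $\tfrac1{\vert k\vert}\,p\textbf{\textup{V}}_{\pm m}p$; second, the conformal map sending the truncated sector-annulus to a standard domain must have distortion constants independent of $r$ and of $\nu$, which forces the somewhat delicate constraint $r < r_0 < \sqrt{3/2}\,r$ so that the ratio of radii stays pinned and the Jensen estimate's implied constant does not blow up.
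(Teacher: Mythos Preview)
Your overall architecture is the paper's: Birman--Schwinger reduction to a holomorphic determinant, upper and lower control of its logarithm on an annulus of scale $r$, and a Jensen-type zero count. But the execution has a real gap exactly where you suspect.

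The paper does \emph{not} apply $\det_{\lceil q\rceil}$ directly to the full sandwiched resolvent $\mathcal{T}_V(z_{\pm m}(k))=\frac{i\tilde J}{k}\mathscr{B}_{\pm m}+\mathscr{A}_{\pm m}(k)$. If you do that, the generic Schatten bound gives only $\ln|\det_{\lceil q\rceil}(I+\mathcal{T})|\le C\|\mathcal{T}\|_{\sqq}^q$, and since $\|\tfrac1k\mathscr{B}_{\pm m}\|_{\sqq}\asymp r^{-1}\|\mathscr{B}_{\pm m}\|_{\sqq}$ for $|k|\asymp r$, this blows up like $r^{-q}$, not like $\textup{Tr}\,\one_{(r,\infty)}(p\textbf{\textup{V}}_{\pm m}p)\,|\ln r|$. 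And your lower-bound claim that ``$\mathcal{T}_\pm(k_0)$ has small norm'' at $|k_0|\asymp r$ is simply false for the same reason: the singular piece $\tfrac1{k_0}\mathscr{B}_{\pm m}$ is large when $r$ is small.

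The missing step is a finite-rank reduction. The paper splits $\mathscr{B}_{\pm m}=\mathscr{B}_{\pm m}\one_{[0,s/2]}(\mathscr{B}_{\pm m})+\mathscr{B}_{\pm m}\one_{(s/2,\infty)}(\mathscr{B}_{\pm m})$ and splits $\mathscr{A}_{\pm m}(k)$ into a fixed finite-rank piece plus a remainder of norm $<\tfrac14$. For $s<|k|<s_0$ the ``small'' pieces combine into an invertible factor of zero index, and one is left with an ordinary determinant $\det(I+\mathscr{K}_{\pm m}(k,s))$ of a \emph{finite-rank} operator whose rank is $\mathcal{O}\bigl(\textup{Tr}\,\one_{(s,\infty)}(p\textbf{\textup{V}}_{\pm m}p)+1\bigr)$ and whose norm is $\mathcal{O}(s^{-1})$. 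The upper bound is then just a product of that many factors each of size $\mathcal{O}(s^{-1})$, giving precisely the $\exp\bigl(\mathcal{O}(\textup{Tr}\,\one_{(s,\infty)}(\cdot))|\ln s|\bigr)$ you want. For the lower bound one does not argue that $\mathcal{T}$ is small; instead one writes $(I+\mathscr{K}_{\pm m})^{-1}$ back in terms of $(I+\mathcal{T}_V)^{-1}$, uses the resolvent identity to express the latter via $\bigl(D_m(b,V)-z_{\pm m}(k_0)\bigr)^{-1}$, and bounds this by the reciprocal distance of $z_{\pm m}(k_0)$ to the numerical range $N\bigl(D_m(b,V)\bigr)$. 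The hypothesis $m>\|V\|$ and the choice of $k_0$ with $|\im k_0|$ comparable to $|k_0|$ are what guarantee this distance is of order $r^2$, so the lower bound matches the upper one up to constants. Without the finite-rank factorisation neither bound goes through.
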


\bigskip

\noindent
In order to state the rest of the results, 
we put some restrictions on $V$.
\\

\noindent
\textbf{Assumption 2.1.} $V$ satisfies 
Assumption $1.1$ with
\begin{equation}\label{eq2,17}
V = \Phi W, \hspace{0.1cm} \Phi \in \bc 
\setminus \brrr, \hspace{0.1cm} \text{and} 
\hspace{0.1cm} W = \big\lbrace W_{\ell k} 
(x) \big\rbrace_{\ell,k=1}^4 \hspace{0.1cm} 
\text{is Hermitian}.
\end{equation}
The potential $W$ will be said to be 
of definite sign if $\pm W(x) \geq 0$ 
for any $x \in \brrr^3$.
Let $J := sign(W)$ denote the matrix sign of $W$.
Without loss of generality, we will say that $W$ 
is of definite sign $J = \pm$. For any 
$\delta > 0$, we set
\begin{equation}\label{eq2,18}
\mathcal{C}_\delta(J) := \big\lbrace k \in \bc : 
- \delta J \im(k) \leq \vert \re(k) \vert 
\big\rbrace, \quad J = \pm.
\end{equation}

\begin{rem}\label{r2,2}
For $W \geq 0$ and $\pm \sin({\rm Arg}\, \Phi) 
> 0$, the non-real eigenvalues $z$ of $D_m(b,V)$ 
verify $\pm \im(z) > 0$. Then, according 
to \textbf{(iii)}-\textbf{(iv)} of Remark 
\ref{r2,1}, they satisfy near $\pm m$:

\textbf{(i)} $z = z_{\pm m}(k) = \frac{\pm m 
(1 + k^2)}{1 - k^2} \in \mathcal{D}_{\pm m}^+ 
(\eta)$, $k \in \mathcal{D}_\pm^\ast(\epsilon)$ 
if $\sin({\rm Arg}\, \Phi) > 0$,

\textbf{(ii)} $z = z_{\pm m}(k) = \frac{\pm m 
(1 + k^2)}{1 - k^2} \in \mathcal{D}_{\pm m}^- 
(\eta)$, $k \in \mathcal{D}_\mp^\ast(\epsilon)$
if $\sin({\rm Arg}\, \Phi) < 0$.
\end{rem}

\begin{theo}[\textbf{Absence of non-real eigenvalues}]\label{t2,3}
Assume that $V$ satisfies Assumptions 1.1 and
2.1 with $W \geq 0$. Then, for any $\delta > 0$ 
small enough, there exists $\varepsilon_0 > 0$ 
such that for any $0 < \varepsilon \leq \varepsilon_0$, 
$D_m(b,\varepsilon V)$ has no non-real eigenvalues in
\begin{equation}\label{e2,27}
\small{\left\lbrace z = z_{\pm m}(k) \in
\begin{cases}
 \mathcal{D}_{\pm m}^+(\eta) :
 k \in \Phi \mathcal{C}_\delta(J) \cap 
 \mathcal{D}_\pm^\ast (\epsilon) & \text{for } 
 {\rm Arg}\, \Phi \in (0,\pi), \\
 \mathcal{D}_{\pm m}^-(\eta) :
 k \in -\Phi \mathcal{C}_\delta(J) \cap 
 \mathcal{D}_\mp^\ast(\epsilon) & \text{for } 
 {\rm Arg}\, \Phi \in -(0,\pi),
 \end{cases}
\right\rbrace}
\end{equation}
for $0 < \vert k \vert \ll 1$.
\end{theo}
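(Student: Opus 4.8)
The plan is to reduce the eigenvalue equation to a statement about zeros of a holomorphic operator-valued function and then show that function is invertible in the claimed region by a Birman–Schwinger / Neumann-series argument, using that the weighted resolvent is small there.

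First I would recall from Section~\ref{s3} the Birman–Schwinger principle: for $z \in \bc \setminus {\rm sp}\,(D_m(b,0))$, $z$ is an eigenvalue of $D_m(b,\varepsilon V) = D_m(b,0) + \varepsilon V$ if and only if $-1$ is an eigenvalue of the operator $\varepsilon\, |V|^{1/2}\,(D_m(b,0) - z)^{-1}\, |V|^{1/2}\, U$, where $V = U |V|$ is the polar-type decomposition (here with $V = \Phi W$, $W \ge 0$, one has $|V| = |\Phi|\, W$ and $U = \Phi/|\Phi|$ a scalar phase). Writing $z = z_{\pm m}(k)$ and using the decomposition of the weighted resolvent from Section~\ref{s4} — in particular the splitting \eqref{eq4,2}--\eqref{eq4,4} that isolates the singular part carried by the $1d$ resolvent of $-\partial_{x_3}^2$ at zero — I would express the Birman–Schwinger operator in the form
\begin{equation*}
\varepsilon\, \mathscr{B}_{\pm m}(k) = \varepsilon \left( \frac{\Phi\, \omega\, p\, \textbf{\textup{V}}_{\pm m}\, p}{k} + \mathcal{A}_{\pm m}(k) \right),
\end{equation*}
where $\omega$ is an explicit nonzero constant coming from the residue of the $1d$ resolvent, $p\,\textbf{\textup{V}}_{\pm m}\, p \ge 0$ is the compact Toeplitz operator appearing in Theorem~\ref{t2,1}, and $\mathcal{A}_{\pm m}(k)$ is holomorphic and uniformly bounded (in fact $\mathcal{S}_\infty$-valued and continuous up to $k = 0$) on $\mathcal{D}_\pm^\ast(\epsilon)$.

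Next I would exploit the sign and phase hypotheses. The key algebraic point is that on the ray $k \in \Phi\,\mathcal{C}_\delta(J)$ with $J = {\rm sign}(W) = +$, the leading term $\Phi\, \omega\, (p\,\textbf{\textup{V}}_{\pm m}\, p)/k$ is, up to the fixed sign of $\omega$, a nonpositive (or nonnegative) multiple of the nonnegative operator $p\,\textbf{\textup{V}}_{\pm m}\, p$: indeed $\Phi/k$ has argument confined to a half-plane avoiding the direction that would make $-1$ attainable. Concretely, ${\rm Re}\,\big( \langle u, -\mathscr{B}_{\pm m}(k) u \rangle \big)$ stays away from $\|u\|^2$ for $k$ in the prescribed sector, because $\mathcal{C}_\delta(J)$ was defined in \eqref{eq2,18} precisely so that $-\delta J \,{\rm Im}\,(k) \le |{\rm Re}\,(k)|$, which after multiplication by $\Phi$ pins down ${\rm Arg}\,(\Phi/k)$ to an interval on which $\Phi\,\omega/k$ has the right half-plane sign relative to $p\,\textbf{\textup{V}}_{\pm m}\, p \ge 0$. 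Hence $I + \varepsilon\, \mathscr{B}_{\pm m}(k)$ is invertible there: its numerical range, controlled by the $O(1/|k|)$ term with the good sign plus the genuinely bounded $O(1)$ remainder times $\varepsilon$, cannot contain $0$ once $\varepsilon \le \varepsilon_0(\delta)$ is small enough (the smallness absorbs $\varepsilon\,\|\mathcal{A}_{\pm m}\|$ against the distance from the numerical range of the leading term to $-1/\varepsilon$). This is exactly the regime $m > \|V\|$ small / $\varepsilon$ small used in Theorem~\ref{t2,1}, so the needed resolvent estimates from Section~\ref{s3} apply verbatim.

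Finally I would translate back: since $I + \varepsilon\, \mathscr{B}_{\pm m}(k)$ is invertible for every $k$ in $\Phi\,\mathcal{C}_\delta(J) \cap \mathcal{D}_\pm^\ast(\epsilon)$ (resp. $-\Phi\,\mathcal{C}_\delta(J) \cap \mathcal{D}_\mp^\ast(\epsilon)$), the Birman–Schwinger principle gives that $z_{\pm m}(k) \notin {\rm sp}\,_{\textbf{disc}}(D_m(b,\varepsilon V))$, which is the assertion \eqref{e2,27}, the case split on ${\rm Arg}\,\Phi \in \pm(0,\pi)$ matching parts \textbf{(i)}--\textbf{(ii)} of Remark~\ref{r2,2} via Remark~\ref{r2,1}. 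I expect the main obstacle to be the sign bookkeeping in the second step: one must check carefully that the constant $\omega$ from the $1d$ residue has the sign making $\Phi\,\omega\,(p\,\textbf{\textup{V}}_{\pm m}\, p)/k$ point away from $-1$ precisely on $\mathcal{C}_\delta(J)$ and not on its complement — i.e. that the geometry of the forbidden sector in \eqref{eq2,18} is the correct one, including the degenerate directions where ${\rm Re}\,(k) = 0$ — and to handle uniformity as $|k| \to 0$, where the leading term blows up; there the argument is that blow-up with the right half-plane sign only helps invertibility, while the compactness of $p\,\textbf{\textup{V}}_{\pm m}\, p$ is what prevents the bounded remainder from spoiling it after multiplication by the small $\varepsilon$.
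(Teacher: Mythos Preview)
Your strategy coincides with the paper's: Birman--Schwinger reduction (Proposition~\ref{p4,1}), the splitting $\mathcal{T}_{\varepsilon V}(z_{\pm m}(k)) = \tfrac{iJ\varepsilon\Phi}{k}\mathscr{B}_{\pm m} + \varepsilon\mathscr{A}_{\pm m}(k)$ with $\mathscr{B}_{\pm m}\ge 0$ self-adjoint and $\mathscr{A}_{\pm m}$ uniformly bounded near $k=0$ (this is Proposition~\ref{p5,1} and Remark~\ref{r5,2}\textbf{(ii)}--\textbf{(iii)}; your $\omega$ is $iJ$, and the leading operator is $\mathscr{B}_{\pm m}=K_{\pm m}^\ast K_{\pm m}$ on $L^2(\brrr^3)$ rather than $p\textbf{\textup{V}}_{\pm m}p$ on $L^2(\brrr^2)$, though the two share nonzero spectrum), followed by the observation that on $\Phi\mathcal{C}_\delta(J)$ the phase of $iJ\Phi/k$ keeps the leading term from producing $-1$.

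The one tactical difference is in the invertibility step. The paper does not run a numerical-range argument on the full operator; instead it inverts the leading part $I+\tfrac{iJ\varepsilon\Phi}{k}\mathscr{B}_{\pm m}$ directly by the spectral theorem for the positive $\mathscr{B}_{\pm m}$, obtaining the explicit uniform bound $\big\|(I+\tfrac{iJ\varepsilon\Phi}{k}\mathscr{B}_{\pm m})^{-1}\big\|\le\sqrt{1+\delta^{-2}}$ for $k\in\Phi\mathcal{C}_\delta(J)$, and then factorises $I+\mathcal{T}_{\varepsilon V}=(I+A_{\pm m}(k))\big(I+\tfrac{iJ\varepsilon\Phi}{k}\mathscr{B}_{\pm m}\big)$ with $\|A_{\pm m}(k)\|\le \varepsilon C\sqrt{1+\delta^{-2}}<1$ for $\varepsilon<\varepsilon_0:=(C\sqrt{1+\delta^{-2}})^{-1}$. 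This sidesteps the geometry you are worried about and gives an explicit $\varepsilon_0$. Your numerical-range route can also be made to work, but one sentence needs correcting: the assertion that ``$\re\langle u,-\mathscr{B}_{\pm m}(k)u\rangle$ stays away from $\|u\|^2$'' is false as written, since the real part of $\tfrac{iJ\Phi}{k}\langle\mathscr{B}_{\pm m}u,u\rangle$ scales like $|k|^{-1}$ and sweeps all of $\brrr$. What is true, and what your next sentence correctly uses, is that the \emph{full} numerical range of the leading term lies on a segment whose argument avoids $\pi$ by at least $\arctan\delta$ when $k\Phi^{-1}\in\mathcal{C}_\delta(J)$, so that its distance to $-1$ is $\ge \delta/\sqrt{1+\delta^2}$ uniformly in $k$ and $\varepsilon$; the $\varepsilon C$ perturbation from $\varepsilon\mathscr{A}_{\pm m}$ then cannot reach $-1$ once $\varepsilon$ is small.
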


\noindent
For $\Omega$ a small pointed 
neighbourhood of $\widetilde{m} 
\in \lbrace \pm m \rbrace$, let us 
introduce the counting function of 
complex eigenvalues of the operator 
$D_m(b,V)$ lying in $\Omega$, 
taking into account the multiplicity:
\begin{equation}\label{defNq}
\begin{split}
 {\mathcal N}_{\widetilde{m}} & \big( \big( 
 D_m(b,V) \big),\Omega \big) := \\
  \# 
 & \big\lbrace z = z_{\widetilde{m}}(k) \in 
 {\rm sp}\,_
 {\textup{\textbf{disc}}} \big( D_m(b,V) 
\big) \cap \bc_\pm \cap \Omega: 0 < \vert k \vert \ll 1
\big\rbrace.
\end{split}
\end{equation}
As an immediate consequence of Theorem 
\ref{t2,3}, we have the following

\begin{cor}[\textbf{Non-accumulation of non-real
eigenvalues}]\label{c1}
Let the assumptions of Theorem \ref{t2,3} 
hold. Then, for any $0 < \varepsilon \leq 
\varepsilon_0$ and any domain $\Omega$ as
above, we have
\begin{equation}
\begin{cases}
 {\mathcal N}_{m} \big( \big( D_m(b,\varepsilon V) 
 \big),\Omega \big) < \infty & \text{for } 
 {\rm Arg}\, \Phi \in \pm \big( 0,\frac{\pi}{2} 
 \big), \\
 {\mathcal N}_{-m} \big( \big( D_m(b,\varepsilon V) 
 \big),\Omega \big) < \infty & \text{for } 
 {\rm Arg}\, \Phi \in \pm \big( \frac{\pi}{2},\pi 
 \big).
 \end{cases}
\end{equation}
\end{cor}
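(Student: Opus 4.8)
The plan is to read off Corollary \ref{c1} as a direct translation of Theorem \ref{t2,3} via the parametrizations set up in Remark \ref{r2,1} and Remark \ref{r2,2}. First I would observe that by Remark \ref{r2,2}, when $W \geq 0$ the non-real eigenvalues $z$ of $D_m(b,\varepsilon V)$ near $\widetilde{m} \in \{\pm m\}$ lie in $\mathcal{D}_{\widetilde{m}}^+(\eta)$ with $k \in \mathcal{D}_\pm^\ast(\epsilon)$ precisely when $\sin({\rm Arg}\, \Phi) > 0$, i.e.\ ${\rm Arg}\, \Phi \in (0,\pi)$, and symmetrically in $\mathcal{D}_{\widetilde{m}}^-(\eta)$ with $k \in \mathcal{D}_\mp^\ast(\epsilon)$ when ${\rm Arg}\, \Phi \in -(0,\pi)$. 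Hence the counting function ${\mathcal N}_{\widetilde{m}}\big(\big(D_m(b,\varepsilon V)\big),\Omega\big)$ counts discrete eigenvalues $z = z_{\widetilde{m}}(k)$ with $k$ ranging in a pointed neighbourhood of $0$ inside $\mathcal{D}_\pm^\ast(\epsilon)$ (resp.\ $\mathcal{D}_\mp^\ast(\epsilon)$), and it suffices to show that only finitely many such $k$ occur.

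Next I would invoke Theorem \ref{t2,3}: for $\delta > 0$ small and $0 < \varepsilon \leq \varepsilon_0$, the operator $D_m(b,\varepsilon V)$ has no non-real eigenvalue $z = z_{\pm m}(k)$ with $k$ in the sector $\Phi\,\mathcal{C}_\delta(J) \cap \mathcal{D}_\pm^\ast(\epsilon)$ (for ${\rm Arg}\, \Phi \in (0,\pi)$) or $-\Phi\,\mathcal{C}_\delta(J)\cap \mathcal{D}_\mp^\ast(\epsilon)$ (for ${\rm Arg}\, \Phi \in -(0,\pi)$), once $0 < |k| \ll 1$. The key point is now a geometric one: I must check that the excluded sector $\Phi\,\mathcal{C}_\delta(J)$ actually contains a full punctured neighbourhood of $0$ inside the relevant half-plane region $\mathcal{D}_\pm^\ast(\epsilon)$ exactly in the stated ${\rm Arg}\, \Phi$ ranges. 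Writing out $\mathcal{C}_\delta(J) = \{k : -\delta J\,\im(k) \leq |\re(k)|\}$, this set is the complement of a thin cone around the positive or negative imaginary axis (depending on the sign $J = \pm$); multiplying by $\Phi$ rotates that cone by ${\rm Arg}\, \Phi$. When ${\rm Arg}\, \Phi \in \pm(0,\frac{\pi}{2})$ one checks that $\Phi\,\mathcal{C}_\delta(J)$ (with the relevant sign of $J$ coming from $W \geq 0$, so $J = +$) covers $\mathcal{D}_\pm^\ast(\epsilon)$ near $0$ for the threshold $\widetilde{m} = m$; when ${\rm Arg}\, \Phi \in \pm(\frac{\pi}{2},\pi)$ the same holds for $\widetilde{m} = -m$, because the correspondence $k \leftrightarrow z_{-m}(k)$ swaps $\bc_\pm$ with $\bc_\mp$ as recorded in Remark \ref{r2,1}(iv). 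Combining: in each of the two cases the set of $k$ with $z_{\widetilde{m}}(k)$ a non-real eigenvalue is eventually empty as $k \to 0$, so the eigenvalues cannot accumulate at $\widetilde{m}$, and since $\mathrm{sp}_{\textbf{disc}}$ is discrete away from the essential spectrum, only finitely many lie in any fixed small pointed neighbourhood $\Omega$ of $\widetilde{m}$.

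Finally, I would package this: given $\Omega$, shrink it (or equivalently restrict to $|k| < \epsilon'$ for small $\epsilon'$) so that the non-existence statement of Theorem \ref{t2,3} applies throughout $\Omega \cap \bc_\pm$; the complex eigenvalues of $D_m(b,\varepsilon V)$ in the remaining compact part of $\Omega$ are isolated with finite multiplicity and cannot accumulate except possibly at $\widetilde{m}$, which has just been excluded, so their number is finite. This yields ${\mathcal N}_{m}\big(\big(D_m(b,\varepsilon V)\big),\Omega\big) < \infty$ for ${\rm Arg}\, \Phi \in \pm(0,\frac{\pi}{2})$ and ${\mathcal N}_{-m}\big(\big(D_m(b,\varepsilon V)\big),\Omega\big) < \infty$ for ${\rm Arg}\, \Phi \in \pm(\frac{\pi}{2},\pi)$.

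I expect the main obstacle to be the bookkeeping in the geometric step — correctly matching the sign $J$ (fixed to $+$ by $W \geq 0$), the rotation by ${\rm Arg}\, \Phi$, and the half-plane $\bc_\pm$ versus $\bc_\mp$ selected by Remark \ref{r2,2} and Remark \ref{r2,1}(iii)--(iv), so that the excluded sector $\pm\Phi\,\mathcal{C}_\delta(J)$ genuinely swallows a punctured neighbourhood of $0$ in $\mathcal{D}_\pm^\ast(\epsilon)$ (resp.\ $\mathcal{D}_\mp^\ast(\epsilon)$) exactly for the ${\rm Arg}\, \Phi$ ranges in the two cases. Everything else is a formal consequence of discreteness of $\mathrm{sp}_{\textbf{disc}}$ off the essential spectrum.
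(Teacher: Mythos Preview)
Your proposal is correct and follows essentially the same route as the paper: the paper's argument consists precisely of the geometric observation that for ${\rm Arg}\,\Phi \in \pm(0,\frac{\pi}{2})$ (resp.\ $\pm(\frac{\pi}{2},\pi)$) and $\delta$ small one has $\pm\Phi\,\mathcal{C}_\delta(J)\cap\mathcal{D}_\pm^\ast(\epsilon)=\mathcal{D}_\pm^\ast(\epsilon)$ (resp.\ $\pm\Phi\,\mathcal{C}_\delta(J)\cap\mathcal{D}_\mp^\ast(\epsilon)=\mathcal{D}_\mp^\ast(\epsilon)$), after which Theorem~\ref{t2,3} together with \eqref{eq2,13}--\eqref{eq2,14} finishes. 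Your write-up is simply a more detailed unpacking of this same geometric check plus the standard discreteness remark.
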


\noindent
Indeed, near $m$, for ${\rm Arg}\, \Phi \in 
\pm \big( 0,\frac{\pi}{2} \big)$ and $\delta$
small enough, we have respectively $\pm \Phi 
\mathcal{C}_\delta(J) \cap \mathcal{D}_\pm^\ast 
(\epsilon) = \mathcal{D}_\pm^\ast (\epsilon)$.
Near $-m$, for ${\rm Arg}\, \Phi \in \pm \big( 
\frac{\pi}{2},\pi \big)$ and $\delta$
small enough, we have respectively $\pm \Phi 
\mathcal{C}_\delta(J) \cap \mathcal{D}_\mp^\ast 
(\epsilon) = \mathcal{D}_\mp^\ast (\epsilon)$.
Therefore, Corollary \ref{c1} follows according 
to \eqref{eq2,13} and \eqref{eq2,14}.
\\

\noindent
Similarly to \eqref{eq2,2}, let 
$\textbf{\textup{W}}_{\pm m}$ define the 
multiplication operators by the functions 
$\textbf{\textup{W}}_{\pm m} : \brrr^2 
\longrightarrow \brrr$ with respect to 
the matrix $\vert W \vert$. Hence, let us
consider the following
\\

\noindent
\textbf{Assumption 2.2.}
The functions $\textbf{\textup{W}}_{\pm m}$
satisfy $0 < \textbf{\textup{W}}_{\pm m}(x_1,x_2) 
\leq e^{-C \langle (x_1,x_2) \rangle^2}$
for some positive constant $C$.
\\

\noindent
For $r_{0} > 0$, $\delta > 0$ two fixed 
constants, and $r > 0$ which tends to 
zero, we define
\begin{equation}\label{eq2,26}
\Gamma^{\delta}(r,r_{0}) := \big\lbrace x + 
iy \in \mathbb{C} : r < x < r_{0}, -\delta x 
< y < \delta x \big\rbrace.
\end{equation}

\begin{theo}[\textbf{Lower bounds}]\label{t2,4}
Assume that $V$ satisfies Assumptions 1.1, 
2.1 and 2.2 with $W \geq 0$. Then, for any 
$\delta > 0$ small enough, there exists 
$\varepsilon_0 > 0$ such that for any 
$0 < \varepsilon \leq \varepsilon_0$, there is 
an accumulation of non-real eigenvalues 
$z_{\pm m}(k)$ of $D_m(b,\varepsilon V)$ near 
$\pm m$ in a sector around the semi-axis 
$\footnote{For $r \in \brrr$, we set $r_\pm := 
\max(0,\pm r)$.}$
\begin{equation}
\begin{cases} 
 z = \pm m \pm e^{i(2{\rm Arg}\, \Phi - \pi)} 
 ]0,+\infty) & \text{for } {\rm Arg}\, \Phi 
 \in \left( \frac{\pi}{2} \right)_\pm + \left( 0,
 \frac{\pi}{2}\right), \\
 z = \pm m \pm e^{i(2{\rm Arg}\, \Phi + \pi)} 
 ]0,+\infty) & \text{for } {\rm Arg}\, \Phi 
 \in -\left( 
 \frac{\pi}{2} \right)_\pm - \left( 0,
 \frac{\pi}{2}\right).
 \end{cases}
\end{equation} 
More precisely, for 
\begin{equation}
{\rm Arg}\, \Phi \in \left( \frac{\pi}{2} 
\right)_\pm + \left( 0,\frac{\pi}{2} \right),
\end{equation}
there exists a decreasing 
sequence of positive numbers $(r_\ell^{\pm m})$, 
$r_\ell^{\pm m} \searrow 0$, such that
\begin{equation}\label{eq2,30}
\displaystyle 
\sum_{\substack{z_{\pm m}(k) \hspace{0.5mm} 
\in \hspace{0.5mm} 
{\rm sp}\,_{\textup{\textbf{disc}}}^+ 
\big( D_m(b,\varepsilon V) \big) \\ k \hspace{0.5mm} 
\in \hspace{0.5mm} -iJ \Phi
\Gamma^\delta(r_{\ell +1}^{\pm m},r_\ell^{\pm m}) 
\cap \mathcal{D}_\pm^\ast(\epsilon)}} \textup{mult} 
\big( z_{\pm m}(k) \big) \geq \textup{Tr} 
\hspace{0.4mm} 
\one_{(r_{\ell +1}^{\pm m},r_\ell^{\pm m})} 
\big( p \textbf{\textup{W}}_{\pm m} p \big).
\end{equation}
For 
\begin{equation}
{\rm Arg}\, \Phi \in - \left( \frac{\pi}{2} 
\right)_\pm - \left( 0,\frac{\pi}{2} \right),
\end{equation}
\eqref{eq2,30} holds again with  
${\rm sp}\,_{\textup{\textbf{disc}}}^+ \big( 
D_m(b,\varepsilon V) \big)$ replaced by 
${\rm sp}\,_{\textup{\textbf{disc}}}^- \big( 
D_m(b,\varepsilon V) \big)$, $k$ by $-k$, and 
$\mathcal{D}_\pm^\ast(\epsilon)$ by 
$\mathcal{D}_\mp^\ast(\epsilon)$.
\end{theo}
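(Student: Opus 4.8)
The plan is to combine the Birman--Schwinger reduction of Sections~\ref{s3}--\ref{s4} with a Rouché-type argument for the index of a finite meromorphic operator-valued function (Appendix~B), comparing the full characteristic-value problem with its leading singular part. First I would use the reduction from those sections (in particular the decomposition \eqref{eq4,2}--\eqref{eq4,4} of the weighted resolvent of $D_m(b,0)$): $z = z_{\pm m}(k)$ with $k \in \mathcal{D}_\pm^\ast(\epsilon)$ is a discrete eigenvalue of $D_m(b,\varepsilon V)$, with multiplicity equal to the index at $k$ of the function below, if and only if $k$ is a characteristic value of
\[
\mathcal{T}_{\pm m}^\varepsilon(k) \;:=\; I + \varepsilon \Phi \Bigl( \tfrac{i}{4mk}\, \mathcal{A}_{\pm m} + \mathcal{B}_{\pm m}(k) \Bigr),
\]
where $\mathcal{A}_{\pm m} \geq 0$ is compact with the same non-zero eigenvalues as $c_0\, p \textbf{\textup{W}}_{\pm m} p$ for a constant $c_0 > 0$ (this is the leading term at $\zeta = 0$ of the one-dimensional resolvent $(-\partial_{x_3}^2 - \zeta)^{-1}$ sandwiched by $\vert W \vert^{1/2}$, together with the Dirac projections that produce the fibre averages \eqref{eq2,2}), and $\mathcal{B}_{\pm m}$ is holomorphic on $\mathcal{D}_\pm^\ast(\epsilon)$ and $\spp$-valued up to $k = 0$. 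Since $W \geq 0$ we have $J = +$, so $\mathcal{A}_{\pm m}$ is non-negative; by Remark~\ref{r2,2} all the characteristic values in question lie in $\bc_+$ when ${\rm Arg}\, \Phi \in (0,\pi)$ and in $\bc_-$ when ${\rm Arg}\, \Phi \in -(0,\pi)$, matching the regions of the statement. All the operator-valued functions involved are $\spp$-valued, so the index of Appendix~B is well defined, and the number of eigenvalues of $D_m(b,\varepsilon V)$ lying in a sub-domain equals the index of $\mathcal{T}_{\pm m}^\varepsilon$ along the boundary of that sub-domain.

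Next I would isolate the leading part $\mathcal{T}_{\pm m}^{0,\varepsilon}(k) := I + \tfrac{i\varepsilon\Phi}{4mk}\,\mathcal{A}_{\pm m}$. Its characteristic values are exactly $k_j := -\tfrac{i\varepsilon\Phi a_j}{4m}$, where $a_1 \geq a_2 \geq \cdots \searrow 0$ are the eigenvalues of $\mathcal{A}_{\pm m}$ repeated with multiplicity; they lie on the half-line $-iJ\Phi\,(0,+\infty)$ with modulus $b_j := \tfrac{\varepsilon\vert\Phi\vert a_j}{4m}$. Off this discrete set one has $\mathcal{T}_{\pm m}^\varepsilon(k) = \mathcal{T}_{\pm m}^{0,\varepsilon}(k)\bigl(I + \mathcal{R}_{\pm m}^\varepsilon(k)\bigr)$ with $\mathcal{R}_{\pm m}^\varepsilon(k) := \varepsilon\Phi\,\mathcal{T}_{\pm m}^{0,\varepsilon}(k)^{-1}\mathcal{B}_{\pm m}(k)$, still $\spp$-valued. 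By multiplicativity of the index, for any admissible contour $\partial\Gamma$ carrying no characteristic value,
\[
\mathrm{Ind}_{\partial\Gamma}\,\mathcal{T}_{\pm m}^\varepsilon \;=\; \mathrm{Ind}_{\partial\Gamma}\,\mathcal{T}_{\pm m}^{0,\varepsilon} \;+\; \mathrm{Ind}_{\partial\Gamma}\bigl(I + \mathcal{R}_{\pm m}^\varepsilon\bigr),
\]
and if $\Vert \mathcal{R}_{\pm m}^\varepsilon(k)\Vert < 1$ on all of $\partial\Gamma$, then $t \mapsto I + t\,\mathcal{R}_{\pm m}^\varepsilon(\cdot)$ is a homotopy within functions invertible on $\partial\Gamma$, so the last index vanishes and the left-hand side of \eqref{eq2,30} equals $\mathrm{Ind}_{\partial\Gamma}\,\mathcal{T}_{\pm m}^{0,\varepsilon} = \#\{\, j : k_j \in \Gamma \,\}$ counted with multiplicity.

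It then remains to pick $\Gamma$ and the sequence $(r_\ell^{\pm m})$. Since $\textbf{\textup{W}}_{\pm m} > 0$ and $p$ is infinite-dimensional, $p \textbf{\textup{W}}_{\pm m} p$ has infinitely many positive eigenvalues $\mu_j \searrow 0$; by Assumption~2.2 they decay at least geometrically, so $r \mapsto \textup{Tr}\,\one_{(r,\infty)}(p \textbf{\textup{W}}_{\pm m} p) = \mathcal{O}(\vert\ln r\vert)$ and each dyadic annulus contains only $\mathcal{O}(1)$ of the $b_j$. Hence, after absorbing the explicit constant $c_0\,\varepsilon\vert\Phi\vert/(4m)$ into the notation, I can select $r_\ell^{\pm m} \searrow 0$ (one per dyadic scale) lying in gaps of $\{b_j\}$ of relative width bounded below, and, for $\delta$ small, set $\Gamma_\ell := -iJ\Phi\,\Gamma^\delta(r_{\ell+1}^{\pm m},r_\ell^{\pm m}) \cap \mathcal{D}_\pm^\ast(\epsilon)$, a thin cone-annulus of half-angle $\approx \arctan\delta$ about the ray $-iJ\Phi\,(0,+\infty)$ on which the $k_j$ sit. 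On $\partial\Gamma_\ell$ --- two circular arcs at radii $r_{\ell+1}^{\pm m}$, $r_\ell^{\pm m}$ staying a fixed relative distance from every $b_j$ by the gap choice, and two radial segments at angular distance $\arctan\delta$ from every $k_j$ --- one gets $\Vert \mathcal{T}_{\pm m}^{0,\varepsilon}(k)^{-1}\Vert \leq C/\delta$, hence $\Vert \mathcal{R}_{\pm m}^\varepsilon(k)\Vert \leq (C/\delta)\,\varepsilon\,\vert\Phi\vert\sup\Vert\mathcal{B}_{\pm m}\Vert < 1$ once $\varepsilon \leq \varepsilon_0 = \varepsilon_0(\delta)$. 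The previous step then gives \eqref{eq2,30} --- in fact with equality --- its right-hand side being $\#\{\, j : \mu_j \in (r_{\ell+1}^{\pm m},r_\ell^{\pm m}) \,\} = \textup{Tr}\,\one_{(r_{\ell+1}^{\pm m},r_\ell^{\pm m})}(p \textbf{\textup{W}}_{\pm m} p)$; since infinitely many $\ell$ contribute and this is eventually positive, this yields the asserted accumulation, and $z_{\pm m}(k) = \pm m \pm 2mk^2/(1-k^2)$ with $k \in -iJ\Phi\,(0,+\infty)$ places the cluster on the semi-axis $\pm m \pm e^{i(2{\rm Arg}\, \Phi \mp \pi)}\,(0,+\infty)$, as stated. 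The range ${\rm Arg}\, \Phi \in -(\pi/2)_\pm - (0,\pi/2)$ follows verbatim after the substitutions $k \mapsto -k$, $\bc_\pm \mapsto \bc_\mp$ of Remark~\ref{r2,1}\,\textbf{(iii)}--\textbf{(iv)}.

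The hard part will be the last step: forcing $\Vert\mathcal{R}_{\pm m}^\varepsilon\Vert < 1$ on the shrinking contours $\partial\Gamma_\ell$ for a \emph{fixed} small $\varepsilon$ requires $(r_\ell^{\pm m})$ to lie in sufficiently wide spectral gaps of $p \textbf{\textup{W}}_{\pm m} p$, which is precisely where the Gaussian bound of Assumption~2.2 --- through $\textup{Tr}\,\one_{(r,\infty)}(p \textbf{\textup{W}}_{\pm m} p) = \mathcal{O}(\vert\ln r\vert)$, hence $\mathcal{O}(1)$ eigenvalues per dyadic block --- is essential; a secondary, more bookkeeping difficulty is to track the explicit constants from the one-dimensional resolvent and the Dirac projections, together with the branch and parametrisation choices of Remarks~\ref{r2,1} and \ref{r5,1}, so as to land in exactly the sectors of the statement.
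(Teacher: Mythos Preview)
Your proposal is correct and follows essentially the same route as the paper: decompose the sandwiched resolvent into the rank-one singular part $\tfrac{iJ\varepsilon\Phi}{k}\mathscr{B}_{\pm m}$ plus a holomorphic remainder, factor $I+\mathcal{T}_{\varepsilon V}$ accordingly, bound the inverse of the singular factor on contours $\partial\Gamma_\ell$ built from relative spectral gaps of $p\textbf{\textup{W}}_{\pm m}p$, and conclude by a Rouché/index argument that the eigenvalue count inside $\Gamma_\ell$ equals that of the model problem. The only cosmetic differences are that the paper writes the factorisation in the opposite order and invokes the Lipschitz bound \eqref{eq3,5} on $\det_{\lceil q\rceil}$ rather than your direct homotopy $t\mapsto I+t\,\mathcal{R}_{\pm m}^\varepsilon$, and it records the gap property as ``$\#\{j:\mu_j-\mu_{j+1}>\nu\mu_j\}=\infty$'' rather than deducing it from the $\mathcal{O}(|\ln r|)$ counting asymptotics; both variants are equivalent here.
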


\noindent
A graphic illustration of 
Theorems \ref{t2,3} and \ref{t2,4} near 
$m$ with $V = \Phi W$, $W \geq 0$, is 
given in Figure 2.1.

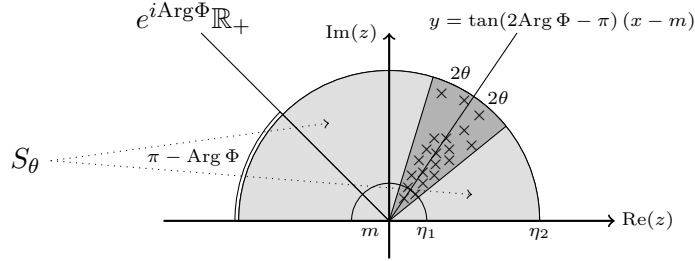
\begin{figure}[h]\label{fig 2}
\begin{center}
\tikzstyle{+grisEncadre}=[fill=gray!60]
\tikzstyle{blancEncadre}=[fill=white!100]
\tikzstyle{grisEncadre}=[fill=gray!25]
\tikzstyle{dEncadre}=[dotted]

\begin{tikzpicture}[scale=1]

\draw (0,0) -- (0:2) arc (0:180:2) -- cycle;

\node at (-0.25,-0.15) {\tiny{$m$}};
\node at (0.5,-0.16) {\tiny{$\eta_1$}};
\node at (2,-0.17) {\tiny{$\eta_2$}};

\draw [grisEncadre] (0,0) -- (0:2) arc (0:180:2) -- cycle;


\draw [+grisEncadre] (0,0) -- (39:2) arc (39:73:2) -- cycle;

\draw (0,0) -- (0:0.5) arc (0:180:0.5) -- cycle;

\draw (0,0) -- (-2.5,2.5) -- cycle;
\draw (-2.6,2.35) node[above] {$e^{i{\rm Arg} \Phi} \mathbb{R}_+$};
\draw (-2.6,0.6) node[above] {\tiny{$\pi - {\rm Arg}\, \Phi$}};
\draw (0,0) -- (135:2.05) arc (135:180:2.05) -- cycle;

\draw[->] [thick] (-3,0) -- (3,0);
\draw (2.96,0) node[right] {\tiny{$\re(z)$}};

\draw[->] [thick] (0,-0.5) -- (0,2.5);
\node at (-0.5,2.5) {\tiny{$\im(z)$}};

\draw (0,0) -- (1.7,2.5);
\draw (2.3,2.4) node[above] {\tiny{$y = \tan (2{\rm Arg}\, \Phi - \pi) 
\hspace{0.5mm} (x - m)$}};

\draw (0.95,1.75) node[above] {\tiny{$2\theta$}};
\draw (1.45,1.42) node[above] {\tiny{$2\theta$}};

\draw [dEncadre] [->] (-4.5,0.8) -- (-0.8,1.3);
\draw [dEncadre] [->] (-4.5,0.8) -- (1.1,0.35);
\draw (-4.5,0.8) node[left] {$S_\theta$};

\node at (0.45,0.5) {\tiny{$\times$}};
\node at (0.45,0.65) {\tiny{$\times$}};
\node at (0.63,0.9) {\tiny{$\times$}};
\node at (0.76,1.1) {\tiny{$\times$}};
\node at (0.8,0.95) {\tiny{$\times$}};
\node at (0.62,0.75) {\tiny{$\times$}};

\node at (0.3,0.6) {\tiny{$\times$}};
\node at (0.4,0.8) {\tiny{$\times$}};
\node at (0.6,1.1) {\tiny{$\times$}};
\node at (0.5,0.95) {\tiny{$\times$}};

\node at (0.35,0.35) {\tiny{$\times$}};
\node at (0.2,0.3) {\tiny{$\times$}};
\node at (0.25,0.45) {\tiny{$\times$}};
\node at (0.6,0.6) {\tiny{$\times$}};
\node at (0.8,0.8) {\tiny{$\times$}};
\node at (1.2,1.4) {\tiny{$\times$}};
\node at (1,1.6) {\tiny{$\times$}};
\node at (0.7,1.7) {\tiny{$\times$}};
\node at (1,1.2) {\tiny{$\times$}};
\node at (1.1,1) {\tiny{$\times$}};

\node at (0,4) {$V = \Phi W$};
\node at (0,4.48) {${\rm Arg}\, \Phi \in (\frac{\pi}{2},\pi), \hspace{0.5mm} W \geq 0$};

\end{tikzpicture}
\caption{\textbf{Localisation of the non-real
eigenvalues near $m$ with $0 < \eta_1 < \eta_2 
< \eta$ small enough:} For $\theta$ small 
enough and $0 < \varepsilon \leq \varepsilon_0$,
$D_m(b,\varepsilon V) := D_m(b,0) + \varepsilon V$ 
has no eigenvalues in $S_\theta$ 
(Theorem \ref{t2,3}). They are concentrated 
around the semi-axis $z = m + e^{i(2{\rm Arg}\, 
\Phi - \pi)} ]0,+\infty)$ (Theorem \ref{t2,4}).}
\end{center}
\end{figure}

\begin{figure}[h]\label{fig 1}
\begin{center}

\vspace*{-1.5cm}

\hspace*{-3.3cm} \includegraphics[scale=0.7]{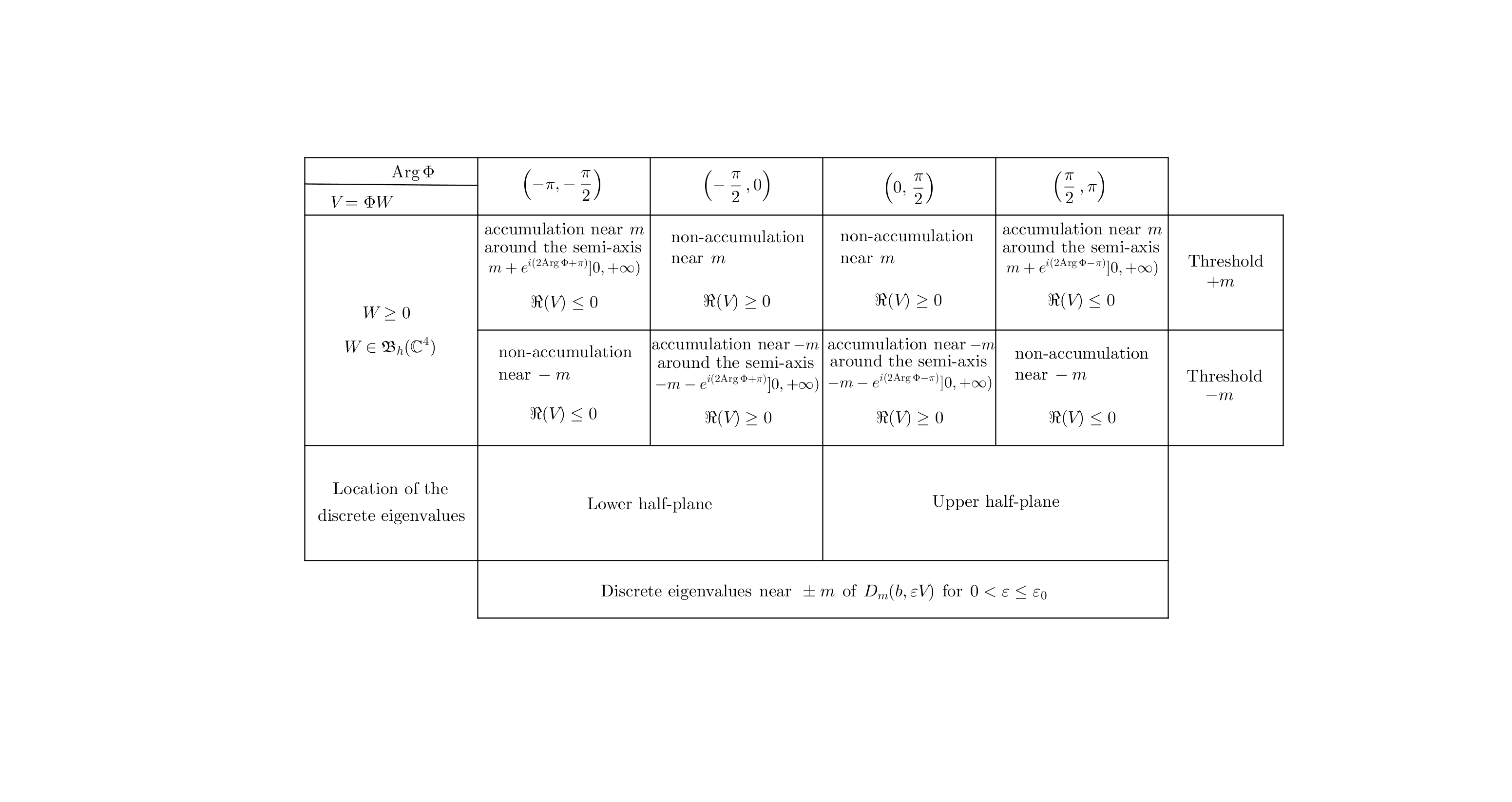}

\vspace*{-1.5cm}

\caption{Summary of results.}
\end{center}
\end{figure}

\medskip

\noindent
In the above results, the accumulation of 
the non-real eigenvalues of $D_m(b,\varepsilon 
V)$ near $\pm m$ holds for any $0 < \varepsilon 
\leq \varepsilon_0$. We expect this to be 
a general phenomenon in the sense of the 
following conjecture:

\begin{cnj}\label{cj2,1}
Let $V = \Phi W$ satisfy Assumption 1.1 
with ${\rm Arg}\, \Phi \in \bc \setminus 
\brrr e^{ik \frac{\pi}{2}}$, $k \in \bz$, 
and $W$ Hermitian of definite sign. 
Then, for any domain $\Omega$ as in 
$\eqref{defNq}$, we have
\begin{equation}
\mathcal{N}_{\pm m} \big( D_m(b,V),\Omega 
\big) < \infty
\end{equation} 
if and only if
$\pm \re (V) > 0$.
\end{cnj}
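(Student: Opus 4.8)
The plan is to push the analysis behind Theorems \ref{t2,1}--\ref{t2,4} to this level of generality. As in Section \ref{s3}, I would write $V=\Phi W=\Phi J|V|$ with $J=\mathrm{sign}(W)\in\{+,-\}$ and pass to the Birman--Schwinger form: for $z\notin{\rm sp}\,(D_m(b,0))$, the number $z$ is a non-real eigenvalue of $D_m(b,V)$ iff $-1\in{\rm sp}\,\big(\mathcal{B}(z)\big)$, where $\mathcal{B}(z):=\Phi\,|V|^{1/2}J\,(D_m(b,0)-z)^{-1}|V|^{1/2}$ is compact under Assumption 1.1. Parametrising $z=z_{\pm m}(k)$ as in \eqref{eq2,11} and inserting the decomposition of the weighted resolvent from Section \ref{s4} (whose singularity at $\pm m$ comes from that of $(-\partial_{x_3}^2-\cdot)^{-1}$ at $0$), one isolates the leading term: as $k\to0$,
\[
\mathcal{B}\big(z_{\pm m}(k)\big)=\frac{c_\pm\,\Phi}{k}\,p\,\textbf{\textup{V}}_{\pm m}\,p+\mathcal{A}_{\pm m}(k),
\]
with $c_\pm\ne0$ explicit and $\mathcal{A}_{\pm m}(\cdot)$ holomorphic and bounded near $k=0$. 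Since $p\,\textbf{\textup{V}}_{\pm m}\,p$ is a \emph{non-negative} compact operator, the condition $-1\in{\rm sp}\,\mathcal{B}(z_{\pm m}(k))$ forces, at leading order, $k$ into a thin conic neighbourhood of the ray $\arg k={\rm Arg}(-c_\pm\Phi J)$; equivalently, by \eqref{eq2,12}, it forces $z-(\pm m)$ into a thin cone about a ray of direction essentially $2\,{\rm Arg}\,\Phi$ (up to the fixed phase carried by $c_\pm$). This is exactly the dichotomy made quantitative by Theorems \ref{t2,3} and \ref{t2,4}: the forced cone either misses the relevant region $\mathcal{D}_\pm^\ast(\epsilon)$ (or $\mathcal{D}_\mp^\ast(\epsilon)$, according to Remark \ref{r2,2}) entirely, or lies inside it.

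\medskip

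\noindent\textbf{The direction $\pm\re(V)>0\Rightarrow$ finiteness.}
Here the hypothesis ${\rm Arg}\,\Phi\notin\brrr e^{ik\pi/2}$ with $\pm W\ge0$ gives $\pm\re(\Phi)>0$, which is precisely the sign configuration for which the forced cone above is disjoint (near $k=0$) from the admissible quarter-disk, as in the computation following Corollary \ref{c1} where $\pm\Phi\,\mathcal{C}_\delta(J)\cap\mathcal{D}_\pm^\ast(\epsilon)=\mathcal{D}_\pm^\ast(\epsilon)$. Concretely I would upgrade Theorem \ref{t2,3} by removing the smallness of the coupling: rather than treating $\mathcal{A}_{\pm m}$ perturbatively, one shows directly that $I+\mathcal{B}(z_{\pm m}(k))$ is invertible for $0<|k|\ll1$ on that region, by an accretivity / numerical-range argument — the dominant term $c_\pm\Phi J\,k^{-1}\,p\,\textbf{\textup{V}}_{\pm m}\,p$ is a non-real multiple (of argument bounded away from $\pi$ there) of a non-negative operator of norm $\gtrsim|k|^{-1}$, which the bounded remainder $\mathcal{A}_{\pm m}$ cannot offset. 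Outside a pointed neighbourhood of $\pm m$ only finitely many eigenvalues occur (compact resolvent perturbation), whence $\mathcal{N}_{\pm m}\big(D_m(b,V),\Omega\big)<\infty$.

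\medskip

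\noindent\textbf{The direction finiteness $\Rightarrow\pm\re(V)>0$.}
By contraposition and the same hypothesis, if $\pm\re(V)\not>0$ then $\pm\re(V)<0$ strictly (because ${\rm Arg}\,\Phi\notin\brrr e^{ik\pi/2}$ rules out $\re(V)\equiv0$ while $W$ has definite sign), and then I would show $\mathcal{N}_{\pm m}=\infty$ by extending Theorem \ref{t2,4}. Assumption 2.2 is used there only to obtain the \emph{rate} of accumulation through the known spectral asymptotics of the Toeplitz operator $p\,\textbf{\textup{W}}_{\pm m}\,p$; for mere infiniteness it suffices that $p\,\textbf{\textup{W}}_{\pm m}\,p$ have infinitely many positive eigenvalues $r_1>r_2>\cdots\searrow0$, which holds for every admissible $W\not\equiv0$ by a standard support argument for Landau-type Toeplitz operators. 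Feeding these $r_\ell$ into a lower bound for the number of zeros of the holomorphic function governing the eigenvalues — via Jensen's inequality and the index formula of Appendix B, exactly as for \eqref{eq2,30} — one obtains at least one non-real eigenvalue in each thin sector $-iJ\Phi\,\Gamma^\delta(r_{\ell+1},r_\ell)\cap\mathcal{D}_\pm^\ast(\epsilon)$; summing over $\ell$ produces accumulation at $\pm m$, i.e. $\mathcal{N}_{\pm m}=\infty$.

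\medskip

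\noindent\textbf{Main obstacle.}
In both directions the real difficulty is dispensing with the smallness $\varepsilon\le\varepsilon_0$ used in Theorems \ref{t2,3}--\ref{t2,4}, whose proofs linearise in $\varepsilon$ and treat the nonlinear remainder as negligible. For the ``if'' part this means proving the non-perturbative accretivity estimate for $I+\mathcal{B}(z)$ on the full sector — the step I expect to be hardest, since the bounded part $\mathcal{A}_{\pm m}$ is no longer small; for the ``only if'' part it means a lower bound on the zero-counting function that survives the non-singular perturbation uniformly. The exclusion ${\rm Arg}\,\Phi\notin\brrr e^{ik\pi/2}$ is genuinely needed: real $\Phi$ makes $D_m(b,V)$ self-adjoint (no non-real eigenvalues at all), while purely imaginary $\Phi$ forces $\re(V)\equiv0$, so the forced ray of eigenvalues becomes tangential to $\bc_\pm$ and the clean dichotomy degenerates.
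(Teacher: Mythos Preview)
The statement you are addressing is labelled a \emph{conjecture} in the paper, not a theorem: the paper explicitly leaves it open and provides no proof. There is therefore no paper proof to compare against; your proposal has to be judged on its own as a strategy.

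What you have written is a reasonable roadmap that follows the paper's own machinery (the Birman--Schwinger reduction of Section~\ref{s3}, the resolvent splitting of Section~\ref{s4}, and the counting arguments behind Theorems~\ref{t2,3}--\ref{t2,4}), and you correctly isolate the decisive obstruction yourself: both Theorem~\ref{t2,3} and Theorem~\ref{t2,4} require the coupling $\varepsilon\le\varepsilon_0$ to be small, and your ``accretivity / numerical-range argument'' for the if-direction and your ``lower bound that survives the non-singular perturbation'' for the only-if direction are precisely the steps that are \emph{missing}, not supplied. These are not technicalities. In the if-direction the remainder $\mathscr{A}_{\pm m}(k)$ is merely bounded, not small, so the singular term $\frac{iJ\Phi}{k}\mathscr{B}_{\pm m}$ need not control invertibility of $I+\mathcal{T}_V(z_{\pm m}(k))$ on the whole sector without a genuinely new estimate; your sentence ``which the bounded remainder $\mathcal{A}_{\pm m}$ cannot offset'' is an assertion, not an argument. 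In the only-if direction the Rouch\'e step \eqref{eq8,22} in Subsection~5.3 again uses smallness of $\varepsilon$ in an essential way.

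There is a secondary gap in your only-if sketch. You claim that Assumption~2.2 is needed only for the \emph{rate} of accumulation and that mere infinitude of the eigenvalues of $p\,\textbf{\textup{W}}_{\pm m}\,p$ suffices. But the proof of Theorem~\ref{t2,4} does not just use that the spectrum is infinite: it uses the spectral \emph{gap} property \eqref{eq8,16}--\eqref{eq8,17} to build the contours $\widetilde{\Sigma}_\ell^{\pm m}$ on which the uniform resolvent bound \eqref{eq8,18} holds. Infinitely many eigenvalues alone do not give such gaps, and the Gaussian decay in Assumption~2.2 is exactly what produces them for Landau--Toeplitz operators. Your ``standard support argument'' yields infinite rank, not the gap condition.

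In short, your outline is faithful to the paper's philosophy and pinpoints the right difficulty, but it remains a programme rather than a proof --- which is consistent with the statement's status as a conjecture.
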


\section{Characterisation of the discrete eigenvalues}\label{s3}

From now on, for $\widetilde m \in \lbrace 
\pm m \rbrace$, $\mathcal{D}^\pm_{\widetilde m} 
(\eta)$ and $\mathcal{D}_\pm^\ast (\epsilon)$ 
are the domains given by \eqref{eq2,8} and 
\eqref{eq2,10} respectively.

\subsection{Local properties of the (weighted) free resolvent}\label{s4,1}

In this subsection, we show in particular 
that under Assumption $1.1$, $V$ is relatively 
compact with respect to $D_m(b,0)$.
\\

\noindent
Let $P := p \otimes 1$ define the orthogonal 
projection onto $\text{Ker} \hspace{0.5mm} H_\perp^- 
\otimes L^2(\brrr)$, where $H_\perp^-$ is the 
two-dimensional magnetic Schrödinger operator 
defined by \eqref{eqh}. Denote $\textup{\textbf{P}}$ 
the orthogonal projection onto the union of 
the eigenspaces of $D_m(b,0)$ corresponding 
to $\pm m$. Then, we have
\begin{equation}\label{eq4,1}
\textup{\textbf{P}} = 
\left( \begin{smallmatrix}
   P & 0 & 0 & 0 \\
   0 & 0 & 0 & 0 \\
   0 & 0 & P & 0 \\
   0 & 0 & 0 & 0
\end{smallmatrix} \right) \quad \text{and} \quad 
\textup{\textbf{Q}} := 
\textup{I} - \textup{\textbf{P}} = 
\left( \begin{smallmatrix}
   I - P & 0 & 0 & 0 \\
   0 & I & 0 & 0 \\
   0 & 0 & I - P & 0 \\
   0 & 0 & 0 & I
\end{smallmatrix} \right),
\end{equation}
(see \cite[Section 3]{diom}). Moreover, if 
$z \in \bc \setminus (-\infty,-m] \cup 
[m,+\infty)$, then 
\begin{equation}\label{eq4,2}
\big( D_m(b,0) - z \big)^{-1} = \big( D_m(b,0) - 
z \big)^{-1} \textup{\textbf{P}} + \big( D_m(b,0) 
- z \big)^{-1} \textup{\textbf{Q}}
\end{equation}
with 
\begin{equation}\label{eq4,4}
\begin{split}
\big( D_m(b,0) - z \big)^{-1} \textup{\textbf{P}}
 = \Big[ p \otimes & \mathscr{R}(z^2 - m^2) \Big] 
 \left( \begin{smallmatrix}
   z + m & 0 & 0 & 0 \\
   0 & 0 & 0 & 0 \\
   0 & 0 & z - m & 0 \\
   0 & 0 & 0 & 0
\end{smallmatrix} \right) \\
& + \Big[ p \otimes (-i\partial_{x_3}) 
\mathscr{R}(z^{2} - m^{2}) \Big] 
\left( \begin{smallmatrix}
   0 & 0 & 1 & 0 \\
   0 & 0 & 0 & 0 \\
   1 & 0 & 0 & 0 \\
   0 & 0 & 0 & 0
\end{smallmatrix} \right).
\end{split}
\end{equation}
Here, the resolvent $\mathscr{R}(z) := \left( 
-\partial_{x_3}^2 - z \right)^{-1}$, $z \in 
\bc \setminus [0,+\infty)$, acts in $L^{2}(\brrr)$.
It admits the integral kernel
\begin{equation}\label{eq4,5}
I_z(x_3,x_3') := -\frac{e^{i\sqrt{z} \vert x_3 - 
x_3' \vert}}{2i\sqrt{z}},
\end{equation}  
with $\im \big( \sqrt{z} \big) > 0$. In what 
follows below, the definition of the 
Schatten-von Neumann class ideals $\sqq$ is 
recalled in Appendix A.

\begin{lem}\label{l4,1}
Let $U \in L^q(\brrr^2)$, $q \in [2,+\infty)$ 
and $\tau > \frac{1}{2}$. Then, the 
operator-valued function 
$$
\bc \setminus {\rm sp}\, \big( D_m(b,0) \big) \ni 
z \longmapsto U \langle x_3 \rangle^{-\tau} 
\big( D_m(b,0) - z \big)^{-1} \textup{\textbf{P}}
$$ 
is holomorphic with values in $\sqq 
\big( L^{2}(\brrr^3) \big)$. Moreover, we have
\begin{equation}\label{eq4,6}
\small{\Big\Vert U \langle x_3 \rangle^{-\tau}
\big( D_m(b,0) - z \big)^{-1} \textup{\textbf{P}} 
\Big\Vert_\sqq^q \leq
C \Vert U \Vert_{L^q}^q M(z,m)^q,}
\end{equation}
where
\begin{equation}\label{eq4,61}
\begin{split}
\small{M(z,m) := \Vert \langle x_3 \rangle^{-\tau} 
\Vert_{L^q} \hspace{0.5mm} \big(} & \small{\vert z 
+ m \vert + \vert z - m \vert \big) \textup{sup}_{s 
\in [0,+\infty)} \left\vert \frac{s + 1}{s - z^2 + 
m^2} \right\vert} \\
& \small{+ \frac{\Vert \langle x_3 \rangle^{-\tau} 
\Vert_{L^2}}{\big( \im\sqrt{z^2 - m^2} 
\big)^{\frac{1}{2}}},}
\end{split}
\end{equation}
$C = C(q,b)$ being a constant depending on 
$q$ and $b$.
\end{lem}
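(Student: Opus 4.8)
The plan is to split the operator $U\langle x_3\rangle^{-\tau}\big(D_m(b,0)-z\big)^{-1}\textup{\textbf{P}}$ into the two summands dictated by \eqref{eq4,4}, namely the piece built from $p\otimes\mathscr{R}(z^2-m^2)$ (with a bounded matrix prefactor of norm $\lesssim |z+m|+|z-m|$) and the piece built from $p\otimes(-i\partial_{x_3})\mathscr{R}(z^2-m^2)$ (with a bounded matrix prefactor of norm $\lesssim 1$). Since $U$ acts only on the $(x_1,x_2)$ variables and $\langle x_3\rangle^{-\tau}$ only on $x_3$, each summand factors as $\big(Up\big)\otimes\big(\langle x_3\rangle^{-\tau}\mathscr{R}(z^2-m^2)\big)$ or $\big(Up\big)\otimes\big(\langle x_3\rangle^{-\tau}(-i\partial_{x_3})\mathscr{R}(z^2-m^2)\big)$ composed with a bounded matrix multiplier; holomorphy in $z$ on $\bc\setminus{\rm sp}\,(D_m(b,0))$ is then inherited from holomorphy of $z\mapsto\mathscr{R}(z^2-m^2)$ there, and the claimed $\sqq$-membership and bound reduce to estimating the two tensor factors separately using the tensor-product property of Schatten norms recalled in Appendix A.

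For the $(x_1,x_2)$-factor, the key input is that $U\in L^q(\brrr^2)$ and $p$ is the Landau-type projection associated to $H_\perp^-$; by the standard Cwikel-type estimate for $\|Up\|_{\sqq}$ (as in \cite{raik}), one has $\|Up\|_{\sqq}\le C(q,b)\|U\|_{L^q}$. For the $x_3$-factor, I would use the explicit integral kernel \eqref{eq4,5} of $\mathscr{R}(w)$ with $w=z^2-m^2$: the operator $\langle x_3\rangle^{-\tau}\mathscr{R}(w)$ has Hilbert–Schmidt norm controlled by $\big(\int\int \langle x_3\rangle^{-2\tau}\,|I_w(x_3,x_3')|^2\,dx_3\,dx_3'\big)^{1/2}$, and since $|I_w(x_3,x_3')|=\frac{e^{-\im(\sqrt w)|x_3-x_3'|}}{2|\sqrt w|}\le\frac{1}{2|\sqrt w|}$ with decay in $|x_3-x_3'|$, a crude bound gives $\|\langle x_3\rangle^{-\tau}\mathscr{R}(w)\|_{\sd}\lesssim \|\langle x_3\rangle^{-\tau}\|_{L^2}\cdot\|\langle x_3\rangle^{-\tau}\|_{L^q}\,\sup_{s\ge0}\big|\tfrac{s+1}{s-w}\big|$ after interpolating against the resolvent identity $\mathscr{R}(w)=\frac{s+1}{s-w}$-type rearrangement used to reconcile the $L^q$ and $L^2$ weights (this is exactly where the factor $\sup_{s\in[0,\infty)}\big|\frac{s+1}{s-z^2+m^2}\big|$ in \eqref{eq4,61} comes from). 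Similarly, $\langle x_3\rangle^{-\tau}(-i\partial_{x_3})\mathscr{R}(w)$ has kernel $\mp\tfrac12\,\mathrm{sgn}(x_3-x_3')\,e^{i\sqrt w|x_3-x_3'|}\langle x_3\rangle^{-\tau}$, which is bounded pointwise by $\tfrac12\langle x_3\rangle^{-\tau}e^{-\im(\sqrt w)|x_3-x_3'|}$, giving $\|\langle x_3\rangle^{-\tau}(-i\partial_{x_3})\mathscr{R}(w)\|_{\sd}\lesssim \|\langle x_3\rangle^{-\tau}\|_{L^2}\big(\im\sqrt w\big)^{-1/2}$ — the second term in \eqref{eq4,61}. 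Then I would combine the two tensor factors via $\|A\otimes B\|_{\sqq}=\|A\|_{\sqq}\|B\|_{\sd}$ (using $q\ge2$ so that $\sd\subset\sqq$ with $\|\cdot\|_{\sqq}\le\|\cdot\|_{\sd}$ on the $x_3$-factor), absorb the bounded matrix multipliers, sum the two contributions, raise to the $q$-th power, and read off \eqref{eq4,6}.

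For holomorphy, I would note that $z\mapsto z^2-m^2$ maps $\bc\setminus{\rm sp}\,(D_m(b,0))$ into $\bc\setminus[0,+\infty)$ by the parametrization discussion around \eqref{eq2,11}, that $w\mapsto\mathscr{R}(w)$ is $\mathcal{B}(L^2(\brrr))$-holomorphic on $\bc\setminus[0,+\infty)$, and that composing with the fixed bounded operators $Up$ (on the left) and the entire matrix-valued polynomial prefactors does not affect holomorphy; to upgrade $\mathcal{B}$-holomorphy to $\sqq$-holomorphy I would use that the $\sqq$-valued function is locally bounded in $\sqq$-norm by the estimate just proved and weakly holomorphic, hence strongly $\sqq$-holomorphic (a standard argument, e.g.\ via Cauchy's formula applied componentwise).

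The main obstacle, and the step I would handle most carefully, is the $x_3$-factor estimate reconciling the two different weights $\|\langle x_3\rangle^{-\tau}\|_{L^q}$ and $\|\langle x_3\rangle^{-\tau}\|_{L^2}$ with the two distinct resolvent behaviours: near the continuous spectrum $w\to0^+$, the naive Hilbert–Schmidt bound $\lesssim|\sqrt w|^{-1}\|\langle x_3\rangle^{-\tau}\|_{L^2}^2$ blows up like $|w|^{-1}$, which is too singular, so one must exploit the oscillatory/decaying exponential $e^{i\sqrt w|x_3-x_3'|}$ to trade one power of $|\sqrt w|$ for the weight, producing exactly the $\big(\im\sqrt{z^2-m^2}\big)^{-1/2}$ term; meanwhile for $|w|$ away from zero one wants the milder $\sup_{s\ge0}\big|\frac{s+1}{s-w}\big|$ factor, obtained by writing $\mathscr{R}(w)$ against the model resolvent $(-\partial_{x_3}^2+1)^{-1}$ via $\mathscr{R}(w)=(-\partial_{x_3}^2+1)^{-1}\,\frac{(-\partial_{x_3}^2+1)}{(-\partial_{x_3}^2-w)}$ and using the spectral bound $\big\|\frac{-\partial_{x_3}^2+1}{-\partial_{x_3}^2-w}\big\|\le\sup_{s\ge0}\big|\frac{s+1}{s-w}\big|$ together with $\|\langle x_3\rangle^{-\tau}(-\partial_{x_3}^2+1)^{-1}\|_{\sd}\lesssim\|\langle x_3\rangle^{-\tau}\|_{L^q}$ for $\tau>1/2$. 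Getting the constants and the interplay of these two regimes to line up with \eqref{eq4,61} is the delicate bookkeeping; everything else is routine tensor-product Schatten-norm manipulation.
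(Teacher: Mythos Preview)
Your plan is essentially the paper's own proof: split according to \eqref{eq4,4} into the two summands $L_1$ (with $p\otimes\mathscr{R}$) and $L_2$ (with $p\otimes(-i\partial_{x_3})\mathscr{R}$), use $\|Up\|_{\sqq}\le C(q,b)\|U\|_{L^q}$ on the transverse factor, and estimate the longitudinal factor separately. Your treatment of the $L_2$ piece---explicit kernel, Hilbert--Schmidt bound yielding $\|\langle x_3\rangle^{-\tau}\|_{L^2}\big(\im\sqrt{z^2-m^2}\big)^{-1/2}$---matches the paper exactly.

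Where you drift off is the $L_1$ piece. There is no interpolation, no ``two regimes,'' and no mixing of the $L^2$ and $L^q$ weights: the split is clean. The paper bounds $\|\langle x_3\rangle^{-\tau}\mathscr{R}(z^2-m^2)\|_{\sqq}$ (not $\sd$) directly by the factorisation you yourself write at the end,
\[
\langle x_3\rangle^{-\tau}\mathscr{R}(z^2-m^2)=\langle x_3\rangle^{-\tau}(-\partial_{x_3}^2+1)^{-1}\cdot(-\partial_{x_3}^2+1)\mathscr{R}(z^2-m^2),
\]
with the second factor bounded in operator norm by $\sup_{s\ge 0}\big|\tfrac{s+1}{s-z^2+m^2}\big|$ via spectral mapping, and the first by Simon's criterion $\|f(x)g(-i\partial)\|_{\sqq}\le C\|f\|_{L^q}\|g\|_{L^q}$, giving $\|\langle x_3\rangle^{-\tau}\|_{L^q}$. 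That is where the $L^q$ weight in the first term of $M(z,m)$ comes from---not from any reconciliation with $L^2$. Your intermediate claim $\|\langle x_3\rangle^{-\tau}\mathscr{R}(w)\|_{\sd}\lesssim \|\langle x_3\rangle^{-\tau}\|_{L^2}\cdot\|\langle x_3\rangle^{-\tau}\|_{L^q}\cdot\sup(\ldots)$ is not correct (and not needed), and your claim $\|\langle x_3\rangle^{-\tau}(-\partial_{x_3}^2+1)^{-1}\|_{\sd}\lesssim\|\langle x_3\rangle^{-\tau}\|_{L^q}$ has the wrong Schatten index: for $\sd$ the right-hand side would carry $\|\langle x_3\rangle^{-\tau}\|_{L^2}$. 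Once you use $\sqq$ on the $L_1$ piece throughout, the two terms of $M(z,m)$ fall out separately from $L_1$ and $L_2$, and the ``delicate bookkeeping'' you worry about disappears.
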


\begin{proof}
The holomorphicity on $\bc \setminus {\rm sp}\, 
\big( D_m(b,0) \big)$ is evident. Let us 
prove the bound \eqref{eq4,6}. 
Constants are generic (i.e. changing from 
a relation to another).
Set 
\begin{equation}\label{eq4,7}
L_1(z) := \Big[ p \otimes \mathscr{R}(z^2 - 
m^2) \Big]  \left( \begin{smallmatrix}
   z + m & 0 & 0 & 0 \\
   0 & 0 & 0 & 0 \\
   0 & 0 & z - m & 0 \\
   0 & 0 & 0 & 0
\end{smallmatrix} \right)
\end{equation}
and 
\begin{equation}\label{eq4,8}
L_2(z) := \Big[ p \otimes (-i\partial_{x_3}) 
\mathscr{R}(z^{2} - m^{2}) \Big] 
\left( \begin{smallmatrix}
   0 & 0 & 1 & 0 \\
   0 & 0 & 0 & 0 \\
   1 & 0 & 0 & 0 \\
   0 & 0 & 0 & 0
\end{smallmatrix} \right).
\end{equation}
Then, from \eqref{eq4,4}, we get
\begin{equation}\label{eq4,9}
U \langle x_3 \rangle^{-\tau} \big( D_m(b,0) 
- z \big)^{-1} \textup{\textbf{P}} = 
U \langle x_3 \rangle^{-\tau} L_1(z) + 
U \langle x_3 \rangle^{-\tau} L_2(z).
\end{equation}

First, we estimate the $\sqq$-norm of the
first term of the RHS of \eqref{eq4,9}. 
Thanks to \eqref{eq4,7}, we have
\begin{equation}\label{eq4,10}
U \langle x_3 \rangle^{-\tau} L_1(z) = 
\Big[ U p \otimes \langle x_3 \rangle^{-\tau} 
\mathscr{R}(z^2 - m^2) \Big]  
\left( \begin{smallmatrix}
   z + m & 0 & 0 & 0 \\
   0 & 0 & 0 & 0 \\
   0 & 0 & z - m & 0 \\
   0 & 0 & 0 & 0
\end{smallmatrix} \right).
\end{equation}  
By an easy adaptation of 
\cite[Proof of Lemma 2.4]{raik}, it can be 
similarly proved that the operator $U p$
satisfies $U p \in \sqq \big( L^{2}(\brrr^2) 
\big)$ with
\begin{equation}\label{eq4,11}
\begin{split}
& \Vert U p \Vert_\sqq^q \leq
\frac{b_{0}}{2\pi} \textup{e}^{2 \hspace{0.2mm} \textup{osc} \hspace{0.5mm} \tilde{\varphi}} \Vert U \Vert_{L^q}^q, \\
& \textup{osc} \hspace{0.5mm} \tilde{\varphi}:= \displaystyle\sup_{(x_1,x_2) \in \mathbb{R}^{2}} \tilde{\varphi} (x_1,x_2) \, - \displaystyle\inf_{(x_1,x_2) \in \mathbb{R}^{2}} \tilde{\varphi} (x_1,x_2).
\end{split}
\end{equation}
On the other hand, we have
\begin{equation}\label{eq4,12}
\begin{split}
\left\Vert \langle x_3 \rangle^{-\tau} 
\mathscr{R}(z^2 - m^2) \right\Vert_\sqq^q 
\leq & \left\Vert \langle x_3 \rangle^{-\tau} 
\left( -\partial_{x_3}^2 + 1 \right)^{-1} 
\right\Vert_\sqq^q \\ 
& \times \left\Vert \left( -\partial_{x_3}^2 + 
1 \right) \mathscr{R}(z^2 - m^2) \right\Vert^q.
\end{split}
\end{equation}
By the Spectral mapping theorem, we have
\begin{equation}\label{eq4,13}
\left\Vert \left( -\partial_{x_3}^2 + 1 \right)
\mathscr{R}(z^2 - m^2) \right\Vert^q \leq 
\textup{sup}_{s \in [0,+\infty)}^q 
\left\vert \frac{s + 1}{s - z^2 + m^2} 
\right\vert,
\end{equation}
and by the standard criterion
\cite[Theorem 4.1]{sim}, we have
\begin{equation}\label{eq4,14}
\left\Vert \langle x_3 \rangle^{-\tau} \left( 
-\partial_{x_3}^2 + 1 \right) \right\Vert^q_\sqq 
\leq C \Vert \langle x_3 \rangle^{-\tau} 
\Vert_{L^q}^q \left\Vert \Bigl( \vert \cdot 
\vert^{2} + 1 \Bigr)^{-1} \right\Vert_{L^q}^q.
\end{equation}
By combining \eqref{eq4,10}, \eqref{eq4,11}, 
\eqref{eq4,12}, \eqref{eq4,13} with \eqref{eq4,14},
we get
\begin{equation}\label{eq4,15}
\begin{split}
\big\Vert U \langle x_3 \rangle^{-\tau} & L_1(z) 
\big\Vert_\sqq^q \leq C(q,b) \Vert U 
\Vert_{L^q}^q \Vert \langle x_3 \rangle^{-\tau} 
\Vert_{L^q}^q \\
& \times \big( \vert z + m \vert + \vert z - m 
\vert \big)^q \textup{sup}_{s \in [0,+\infty)}^q 
\left\vert \frac{s + 1}{s - z^2 + m^2} \right\vert.
\end{split}
\end{equation}

Now, we estimate the $\sqq$-norm of the
second term of the RHS of \eqref{eq4,9}.
Thanks to \eqref{eq4,8}, we have
\begin{equation}\label{eq4,16}
U \langle x_3 \rangle^{-\tau} L_2(z) = 
\Big[ U p \otimes \langle x_3 \rangle^{-\tau} 
(-i\partial_{x_3}) \mathscr{R}(z^2 - m^2) \Big]  
\left( \begin{smallmatrix}
   0 & 0 & 1 & 0 \\
   0 & 0 & 0 & 0 \\
   1 & 0 & 0 & 0 \\
   0 & 0 & 0 & 0
\end{smallmatrix} \right).
\end{equation}  
According to \eqref{eq4,5}, the operator 
$\langle x_3 \rangle^{-\tau} (-i\partial_{x_3}) 
\mathscr{R}(z^2 - m^2)$ admits the integral 
kernel
\begin{equation}\label{eq4,17}
- \langle x_3 \rangle^{-\tau} \frac{x_3 - x_3'}
{2\vert x_3 - x_3' \vert} e^{i\sqrt{z^2 - m^2} 
\vert x_3 - x_3' \vert}.
\end{equation} 
An estimate of the $L^2(\brrr^2)$-norm of 
\eqref{eq4,17} shows that $\langle x_3 
\rangle^{-\tau} (-i\partial_{x_3}) \mathscr{R}
(z^2 - m^2) \in \sd \big( L^2(\brrr) \big)$
with
\begin{equation}\label{eq4,18}
\big\Vert \langle x_3 \rangle^{-\tau} 
(-i\partial_{x_3}) \mathscr{R}
(z^2 - m^2) \big\Vert_\sd^2 \leq 
\frac{C \Vert \langle x_3 \rangle^{-\tau} 
\Vert_{L^2}^2}{\im\sqrt{z^2 - m^2}}.
\end{equation}  
By combining \eqref{eq4,16}, \eqref{eq4,11} 
with \eqref{eq4,18}, we get
\begin{equation}\label{eq4,19}
\big\Vert U \langle x_3 \rangle^{-\tau} 
L_2(z) \big\Vert_\sqq \leq C(q,b)^{\frac{1}{q}} 
\frac{\Vert U \Vert_{L^q} \Vert 
\langle x_3 \rangle^{-\tau} \Vert_{L^2}}{\big( 
\im\sqrt{z^2 - m^2} \big)^{\frac{1}{2}}}.
\end{equation}
Then, \eqref{eq4,6} follows immediately 
from \eqref{eq4,9}, \eqref{eq4,15} and 
\eqref{eq4,19}, which gives the proof.
\end{proof}

\noindent
For simplicity of notation in the sequel, 
we set
\begin{equation}\label{eq4,22}
H^\pm := (-i\nabla - \textbf{A})^{2} \pm b
= H_\perp^\pm \otimes 1 + 1 \otimes
(-\partial_{x_3}^2),
\end{equation}
where $H_\perp^-$ is the operator 
defined by \eqref{eqh}, $H_\perp^+$ being the 
corresponding operator with $-b$ replaced 
by $b$. 
We recall
from \cite[Subsection 2.2]{raik} that 
we have 
\begin{equation}\label{eqs}
\begin{split}
& \dim \hspace{0.5mm} \textup{Ker} 
\hspace{0.5mm} H_{\perp}^{-} = \infty, 
\quad \dim \hspace{0.5mm} \textup{Ker} 
\hspace{0.5mm} H_{\perp}^{+} = 0, \\
& \text{and} \quad \sigma (H_{\perp}^{\pm}) 
\subset \lbrace 0 \rbrace \cup [\zeta,+\infty), 
\end{split}
\end{equation}
with
\begin{equation}
\zeta := 2 b_{0} \textup{e}^{-2 \textup{osc} 
\hspace{0.5mm} \tilde{\varphi}} > 0,
\end{equation}
$\textup{osc} 
\hspace{0.5mm} \tilde{\varphi}$ being defined 
by \eqref{eq4,11}. Since the spectrum of
the one-dimensional Laplacian $-\partial_{x_3}^2$ 
coincides with $[0,+\infty)$, we deduce
from \eqref{eq4,22} and \eqref{eqs} that, on 
one hand, the spectrum of the operator $H^+$ 
belongs to $[\zeta,+\infty)$ (notice that in the 
constant magnetic field case $b = b_0$, we have 
$\zeta = 2b_0$, the first Landau level of $H^+$). 
On the other hand, that the spectrum of the
operator $H^-$ coincides with $[0,+\infty)$.

\begin{lem}\label{l4,2} 
Let $g \in L^q(\brrr^3)$, $q \in [4,+\infty)$. 
Then, the operator-valued function 
\begin{equation}\label{eq4,191}
\small{\bc \setminus \left\lbrace \left( -\infty,
-\sqrt{m^{2} + \zeta} \right] \cup \left[ 
\sqrt{m^2 + \zeta},+\infty \right) \right\rbrace 
\ni z \longmapsto g \big( D_m(b,0) 
- z \big)^{-1} \textup{\textbf{Q}}}
\end{equation}
is holomorphic with values in $\sqq 
\big( L^{2}(\brrr^3) \big)$. Moreover, we have
\begin{equation}\label{eq4,20}
\left\Vert g \big( D_m(b,0) - z \big)^{-1} 
\textup{\textbf{Q}} \right\Vert_\sqq^q \leq C \Vert 
g \Vert_{L^{q}}^q \widetilde{M}(z,m)^q,
\end{equation}
where
\begin{equation}\label{eq4,21}
\small{\widetilde{M}(z,m) :=} \small{\textup{sup}_{s \in [\zeta,+\infty)} 
\left\vert \frac{s + 1}{s + m^2} \right\vert^{\frac{1}{2}}} 
 \small{+ \hspace*{0.1cm} \big( \vert z \vert + \vert z \vert^2 \big) 
\textup{sup}_{s \in [\zeta,+\infty)} 
\left\vert \frac{s + 1}{s + m^2 - z^2} \right\vert,}
\end{equation}
$C = C(q)$ being a constant depending on $q$.
\end{lem}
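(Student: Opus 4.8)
The plan is to prove Lemma \ref{l4,2} in close analogy with Lemma \ref{l4,1}, exploiting that the range of $\textup{\textbf{Q}}$ avoids the singular part of the spectrum near $\pm m$ so that no inverse powers of $\im\sqrt{z^2-m^2}$ appear. The starting point is the operator decomposition \eqref{eq4,2}: on the range of $\textup{\textbf{Q}}$ the operator $D_m(b,0)$ is built from the magnetic Schrödinger operators $H_\perp^\pm$ restricted away from their kernels (together with the full operator in the ``lower'' components), so that $D_m(b,0)^2$ restricted to $\operatorname{Ran}\textup{\textbf{Q}}$ acts like $H^\pm + m^2$ with $H^\pm$ as in \eqref{eq4,22}. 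The holomorphy claim on the stated set is then immediate: $\textup{\textbf{Q}}\big(D_m(b,0)-z\big)^{-1}$ is analytic precisely where $z^2$ avoids the spectrum of $D_m(b,0)^2\big|_{\operatorname{Ran}\textup{\textbf{Q}}}$, and by \eqref{eqs} that spectrum is contained in $\{m^2\}\cup[m^2+\zeta,+\infty)$ — wait, more precisely in $[m^2,+\infty)$ with the genuinely singular thresholds sitting at $\pm\sqrt{m^2+\zeta}$; since $z\mapsto z^2$ maps $\bc\setminus\big((-\infty,-\sqrt{m^2+\zeta}]\cup[\sqrt{m^2+\zeta},+\infty)\big)$ into $\bc\setminus[m^2+\zeta,+\infty)$, holomorphy with values in bounded operators is clear, and the $\sqq$-valued statement follows once the norm bound \eqref{eq4,20} is established.

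For the Schatten-norm estimate I would proceed exactly as in the proof of Lemma \ref{l4,1}. First I write $g\big(D_m(b,0)-z\big)^{-1}\textup{\textbf{Q}} = g\big(D_m(b,0)+z\big)\big(D_m(b,0)^2-z^2\big)^{-1}\textup{\textbf{Q}}$, so that the resolvent of the scalar operator $H^\pm+m^2$ appears and one is reduced to estimating $g\,(H^\pm + m^2 - z^2)^{-1}$ and $g\,(D_m(b,0))(H^\pm+m^2-z^2)^{-1}$ on the range of $\textup{\textbf{Q}}$. The key inputs are: (i) the diamagnetic-type bound $\|g(H^\pm+1)^{-1}\|_{\sqq}^q \le C\|g\|_{L^q}^q \|(|\cdot|^2+1)^{-1}\|_{L^q(\brrr^3)}^q$, which holds for $q\ge 4$ by the standard Schatten criterion \cite[Theorem 4.1]{sim} combined with the diamagnetic inequality (or, in the reduced form, by the tensor-product argument already used for $Up$ in \eqref{eq4,11} together with \eqref{eq4,14}); and (ii) the spectral-mapping bounds
\begin{equation*}
\big\|(H^\pm+1)(H^\pm+m^2-z^2)^{-1}\big\| \le \sup_{s\in[\zeta,+\infty)}\left|\frac{s+1}{s+m^2-z^2}\right|,\qquad
\big\|(H^\pm+1)^{1/2}(H^\pm+m^2)^{-1/2}\big\| \le \sup_{s\in[\zeta,+\infty)}\left|\frac{s+1}{s+m^2}\right|^{1/2},
\end{equation*}
where the supremum is over $[\zeta,+\infty)$ by \eqref{eqs} (the kernel of $H_\perp^-$ being killed by $\textup{\textbf{Q}}$ precisely in the first and third components). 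Interpolating $g(H^\pm+m^2-z^2)^{-1} = \big[g(H^\pm+1)^{-1/2}\big]\big[(H^\pm+1)^{1/2}(H^\pm+m^2-z^2)^{-1/2}\big]\big[(H^\pm+1)^{1/2}(H^\pm+m^2-z^2)^{-1/2}\big]$ — or, more simply, writing $g(H^\pm+m^2-z^2)^{-1} = g(H^\pm+1)^{-1}\cdot(H^\pm+1)(H^\pm+m^2-z^2)^{-1}$ and using Hölder for Schatten classes — yields the term $(|z|+|z|^2)\sup_{s\in[\zeta,+\infty)}|\frac{s+1}{s+m^2-z^2}|$ after accounting for the factor $D_m(b,0)+z$ (whose ``$H^\pm$-part'' contributes the $|z|^2$, the $z$ contributes the linear term, and the bounded matrix factors $m\beta$ etc.\ are harmless), and the term $\sup_{s\in[\zeta,+\infty)}|\frac{s+1}{s+m^2}|^{1/2}$ from the part of $D_m(b,0)+z$ that behaves like $(H^\pm)^{1/2}$.

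Assembling these pieces with the triangle inequality and absorbing all constants into $C=C(q)$ gives \eqref{eq4,20} with $\widetilde M(z,m)$ as in \eqref{eq4,21}. The main obstacle, as in Lemma \ref{l4,1}, is not any single estimate but the bookkeeping needed to track how the $4\times4$ matrix structure of $D_m(b,0)$ interacts with the projection $\textup{\textbf{Q}}$ from \eqref{eq4,1}: one must check that on $\operatorname{Ran}\textup{\textbf{Q}}$ the relevant scalar operators really are $H^+$ (in the components carrying $I-P$) and $H^-$ or the full magnetic Laplacian (in the components carrying $I$), so that in every block the spectral parameter $s$ ranges over $[\zeta,+\infty)$ rather than $[0,+\infty)$ — this is exactly what removes the singularity at $z^2=m^2$ and is the whole point of separating off $\textup{\textbf{P}}$ in \eqref{eq4,2}. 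Once that block decomposition is pinned down, the remaining work is routine and mirrors \eqref{eq4,10}–\eqref{eq4,19} verbatim.
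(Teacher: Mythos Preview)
Your strategy is the same as the paper's: pass to the squared operator, use the block-diagonal form \eqref{eq4,26} of $(D_m(b,0)^2-z^2)^{-1}\textup{\textbf{Q}}$, insert $(H^\pm+1)^{-1}$, control $g(H^\pm+1)^{-1}$ in $\sqq$ by the diamagnetic inequality plus the criterion \cite[Theorem 4.1]{sim}, and handle the remaining factor by spectral mapping on $[\zeta,+\infty)$. Two points, however, need repair.

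First, your account of how the two terms of $\widetilde M(z,m)$ arise is incoherent. From your identity $(D_m-z)^{-1}=(D_m+z)(D_m^2-z^2)^{-1}$ it is not the case that ``the $H^\pm$-part of $D_m+z$ contributes the $|z|^2$'' --- that part carries no $z$ at all --- nor can the same piece simultaneously furnish the $\sup_{s\ge\zeta}|(s+1)/(s+m^2)|^{1/2}$ term. The paper instead uses the resolvent identity
\[
(D_m(b,0)-z)^{-1}=D_m(b,0)^{-1}+z\bigl(1+zD_m(b,0)^{-1}\bigr)(D_m(b,0)^2-z^2)^{-1},
\]
which cleanly separates a $z$-independent summand $gD_m(b,0)^{-1}\textup{\textbf{Q}}$ (yielding, via \eqref{eq4,36}--\eqref{eq4,37} with $\gamma=1$, the first supremum in $\widetilde M$) from a remainder $gL_3(z)\textup{\textbf{Q}}$ whose prefactor $z(1+zD_m^{-1})$ gives exactly $|z|+|z|^2$. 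Your identity reduces to this one after writing $D_m(D_m^2-z^2)^{-1}=D_m^{-1}+z^2D_m^{-1}(D_m^2-z^2)^{-1}$, but you should carry that out rather than guess at the bookkeeping.

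Second, you assert the diamagnetic-type bound $\|g(H^\pm+1)^{-1}\|_{\sqq}\le C\|g\|_{L^q}$ ``for $q\ge4$'' without comment. The form of the diamagnetic inequality used here (\cite[Theorem 2.13]{sim}) is only available for \emph{even} $q$; the paper therefore proves \eqref{eq4,20} first for even $q\ge4$ and then reaches general $q\in[4,\infty)$ by Riesz--Thorin interpolation of the map $g\mapsto g(D_m(b,0)-z)^{-1}\textup{\textbf{Q}}$ between $L^{q_0}\to\textup{\textbf{S}}_{q_0}$ and $L^{q_1}\to\textup{\textbf{S}}_{q_1}$. You omit this step entirely.

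A minor correction: in the block structure of \eqref{eq4,26} the components carrying $I-P$ see $H^-$ (restricted to $(I-P)$), and the components carrying $I$ see $H^+$ --- you have this reversed.
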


\begin{proof}
For $z \in \rho \big( D_m(b,0) \big)$ \big(the 
resolvent set of $D_m(b,0)$\big), we have
\begin{equation}\label{eq4,23}
\small{\big( D_m(b,0) - z \big)^{-1} = D_m(b,0)^{-1} + z 
\big( 1 + z D_m(b,0)^{-1} \big) \big( D_m(b,0)^{2} 
- z^{2} \big)^{-1}.}
\end{equation} 
By setting 
\begin{equation}\label{eq4,24}
L_3(z) := z 
\big( 1 + z D_m(b,0)^{-1} \big) \big( D_m(b,0)^{2} 
- z^{2} \big)^{-1},
\end{equation} 
we get from \eqref{eq4,23}
\begin{equation}\label{eq4,25}
g \big( D_m(b,0) - z \big)^{-1} \textup{\textbf{Q}} 
= g D_m(b,0)^{-1}\textup{\textbf{Q}} + g L_3(z) 
\textup{\textbf{Q}}.
\end{equation} 
It can be proved that
\begin{equation}\label{eq4,26}
\begin{split}
& \tiny{\big( D_m(b,0)^{2} -z^2 \big)^{-1} 
\textup{\textbf{Q}}}
\\ 
& = \tiny{\left( \begin{smallmatrix}
 \big( H^- + m^2 -z^2 \big)^{-1}(I - P) 
 & 0 & 0 & 0 \\
 0 & \big( H^+ + m^2 - z^2 \big)^{-1} & 0 & 0 \\
 0 & 0 & \big( H^- + m^2 -z^2 \big)^{-1}(I - P) 
 & 0 \\
 0 & 0 & 0 & \big( H^+ + m^2 - z^2 \big)^{-1}
\end{smallmatrix} \right)},
\end{split}
\end{equation}
(see for instance \cite[Identity (2.2)]{tda}). 
The set $\bc \setminus [\zeta,+\infty)$ is 
included in the resolvent set of $H^-$ 
defined on $(I - P)Dom(H^-)$. Similarly, it is
included in the resolvent set of $H^+$
defined on $Dom(H^+)$. Then,
\begin{equation}\label{eq4,27}
\small{\bc \setminus \left\lbrace \left( 
-\infty,-\sqrt{m^{2} + \zeta} \right] \cup 
\left[ \sqrt{m^2 + \zeta},+\infty \right) 
\right\rbrace \ni z \longmapsto \big( D_m(b,0)^2 
- z^2 \big)^{-1} \textup{\textbf{Q}}}
\end{equation}
is well defined and holomorphic. Therefore, so 
is the operator-valued function \eqref{eq4,191} 
thanks to \eqref{eq4,24} and \eqref{eq4,25}.

It remains to prove the bound \eqref{eq4,20}. 
As in the proof of the previous lemma, the 
constants change from a relation to another. 
First, we prove that \eqref{eq4,20} is true 
for $q$ even. 

Let us focus on the second term of the RHS of 
\eqref{eq4,25}. According to \eqref{eq4,24} and
\eqref{eq4,26}, we have
\begin{equation}\label{eq4,28}
\begin{split}
& \left\Vert g L_3(z) \textup{\textbf{Q}}
\right\Vert_\sqq^q \leq C \big( \vert z \vert + 
\vert z \vert^2 \big)^q \\
& \times \left( \left\Vert g \big( H^- + m^2 - z^2 
\big)^{-1}(I - P) \right\Vert_\sqq^q + \left\Vert g 
\big( H^+ + m^2 - z^2 \big)^{-1} \right\Vert_\sqq^q 
\right).
\end{split}
\end{equation}
One has
\begin{equation}\label{eq4,29}
\begin{split}
\Big\Vert g \big( H^- + m^2 - z^2 \big)^{-1} & (I - P) 
\Big\Vert_\sqq^q \leq \left\Vert g (H^- + 1)^{-1} 
\right\Vert_\sqq^q \\
& \times \left\Vert (H^- + 1) \big( H^- + m^2 - z^2 
\big)^{-1}(I - P) \right\Vert^q.
\end{split}
\end{equation}
The Spectral mapping theorem implies that
\begin{equation}\label{eq4,30}
\small{\left\Vert (H^- + 1)\big( H^- + m^2 - z^2 \big)^{-1}
(I - P) \right\Vert^q \leq \textup{sup}_{s \in 
[\zeta,+\infty)}^q \left\vert \frac{s + 1}{s + m^2 - 
z^2} \right\vert.}
\end{equation}
Exploiting the resolvent equation, the 
boundedness of $b$, and the diamagnetic inequality 
(see \cite[Theorem 2.3]{avr} and \cite[Theorem 
2.13]{sim}, which is only valid when $q$ is even), 
we obtain
\begin{equation}\label{eq4,31}
\begin{split}
\left\Vert g \bigl( H^- + 1 \bigr)^{-1} 
\right\Vert^q_\sqq & \leq \left\Vert I + 
(H^- + 1)^{-1}b \right\Vert^q \left\Vert g \big( 
(-i\nabla - \textbf{A})^{2} + 1 \big)^{-1} 
\right\Vert^q_\sqq \\ & \leq C \left\Vert g 
(-\Delta + 1)^{-1} \right\Vert^q_\sqq.
\end{split}
\end{equation}
The standard criterion \cite[Theorem 4.1]{sim} 
implies that
\begin{equation}\label{eq4,32}
\left\Vert g (-\Delta + 1 \vert)^{-1} 
\right\Vert^q_\sqq \leq C \Vert g \Vert_{L^q}^q 
\left\Vert \Bigl( \vert \cdot \vert^{2} + 1 
\Bigr)^{-1} \right\Vert_{L^q}^q.
\end{equation}
The bound \eqref{eq4,29} together with \eqref{eq4,30}, 
\eqref{eq4,31} and \eqref{eq4,32} give
\begin{equation}\label{eq4,33}
\small{\left\Vert g \big( H^- + m^2 - z^2 \big)^{-1}
(I - P) \right\Vert^q_\sqq
\leq C \Vert g \Vert_{L^{q}}^{q} \textup{sup}_{s 
\in [\zeta,+\infty)}^q \left\vert \frac{s + 1}
{s + m^2 - z^2} \right\vert.}
\end{equation}
Similarly, it can be shown that
\begin{equation}\label{eq4,34}
\left\Vert g \big( H^+ + m^2 - z^2 \big)^{-1} 
\right\Vert^q_\sqq \leq C \Vert g 
\Vert_{L^{q}}^{q} \textup{sup}_{s \in 
[\zeta,+\infty)}^q \left\vert \frac{s + 1}
{s + m^2 - z^2} \right\vert.
\end{equation}
This together with \eqref{eq4,28} and 
\eqref{eq4,33} give
\begin{equation}\label{eq4,35}
\left\Vert g L_3(z) \textup{\textbf{Q}}
\right\Vert_\sqq^q \leq C \Vert g 
\Vert_{L^{q}}^{q} \big( \vert z \vert + 
\vert z \vert^2 \big)^q \textup{sup}_{s \in 
[\zeta,+\infty)}^q \left\vert \frac{s + 1}
{s + m^2 - z^2} \right\vert.
\end{equation}

Now, we focus on the first term $g D_m(b,0)^{-1} 
\textup{\textbf{Q}}$ of the RHS of \eqref{eq4,25}. 
For $\gamma > 0$, as in \eqref{eq4,26}, we have
\begin{equation}\label{eq4,36}
\begin{split}
& \tiny{D_m(b,0)^{-\gamma} 
\textup{\textbf{Q}}} \\ 
& = \tiny{\left( \begin{smallmatrix}
 \big( H^- + m^2 \big)^{-\frac{\gamma}{2}}(I - P) 
 & 0 & 0 & 0 \\
 0 & \big( H^+ + m^2 \big)^{-\frac{\gamma}{2}} 
 & 0 & 0 \\
 0 & 0 & \big( H^- + m^2 \big)^{-\frac{\gamma}{2}}
 (I - P)  
 & 0 \\
 0 & 0 & 0 & \big( H^+ + m^2 
 \big)^{-\frac{\gamma}{2}}
\end{smallmatrix} \right).}
\end{split}
\end{equation}
Therefore, arguing as above 
\big(\eqref{eq4,28}-\eqref{eq4,34}\big), it 
can be proved that 
\begin{equation}\label{eq4,37}
\left\Vert g D_m(b,0)^{-\gamma} \textup{\textbf{Q}}
\right\Vert_\sqq^q \leq C(q,\gamma) \Vert g 
\Vert_{L^{q}}^{q} \textup{sup}_{s \in 
[\zeta,+\infty)}^q \left\vert \frac{s + 1}
{s + m^2} \right\vert^\frac{\gamma}{2}, \quad
\gamma q > 3.
\end{equation}
Then, for $q$ even, \eqref{eq4,20} follows 
by putting together \eqref{eq4,25}, 
\eqref{eq4,35}, and \eqref{eq4,37} with 
$\gamma = 1$.

We get the general case $q \geq 4$ with 
the help of interpolation methods. 

If $q$ satisfies $q > 4$, then, there exists 
even integers $q_{0} < q_{1}$ such that $q 
\in (q_{0},q_{1})$ with $q_{0} \geq 4$. 
Let $\beta \in (0,1)$ satisfy $\frac{1}{q} 
= \frac{1 - \beta}{q_{0}} + \frac{\beta}{q_{1}}$ 
and consider the operator
$$
L^{q_i} \big( \brrr^3 \big) \ni g \overset{T}
{\longmapsto} g \big( D_m(b,0) - z \big)^{-1} 
\textup{\textbf{Q}} \in \textup{\textbf{S}}_{q_{i}} 
\big( L^{2}(\brrr^3) \big), \qquad i = 0, 1.
$$
Let $C_{i} = C(q_{i})$, $i = 0$, $1$, 
denote the constant appearing in \eqref{eq4,20} 
and set
$$
C(z,q_{i}) := C_i^{\frac{1}{q_i}} 
\widetilde{M}(z,m).
$$
From \eqref{eq4,20}, we know that $\Vert T \Vert 
\leq C(z,q_{i})$, $i = 0$, $1$. Now, we use 
the Riesz-Thorin Theorem (see for instance 
\cite[Sub. 5 of Chap. 6]{fol}, \cite{rie,tho}, 
\cite[Chap. 2]{lun}) to interpolate between 
$q_{0}$ and $q_{1}$. We obtain the extension 
$T : L^{q}(\brrr^2) \longrightarrow \sqq \big( 
L^{2}(\brrr^3) \big)$ with
$$
\Vert T \Vert \leq C(z,q_{0})^{1-\beta} 
C(\gamma,q_{1})^{\beta} \leq C(q)^{\frac{1}{q}} 
\widetilde{M}(z,m).
$$
In particular, for any $g \in L^q(\brrr^3)$, 
we have
$$
\Vert T(g) \Vert_\sqq \leq 
C(q)^{\frac{1}{q}} \widetilde{M}(z,m)
\Vert g \Vert_{L^q},
$$
which is equivalent to \eqref{eq4,20}. This 
completes the proof.
\end{proof}

\noindent
Assumption $1.1$ ensures the existence of 
$\mathscr{V} \in \mathscr{L} \big( L^{2}(\brrr^3) 
\big)$ such that for any $x \in \brrr^3$,
\begin{equation}\label{eqv}
\vert V \vert^\frac{1}{2} (x)= \mathscr{V} 
F_\perp^\frac{1}{2} (x_1,x_2) G^\frac{1}{2}(x_3).
\end{equation}
Therefore, the boundedness of $V$ together 
with Lemmas \ref{l4,1}-\ref{l4,2}, 
\eqref{eq4,2}, and \eqref{eqv}, imply that $V$ is 
relatively compact with respect to $D_m(b,0)$.
\\

\noindent
Since for $k \in \mathcal{D}_\pm^\ast (\epsilon)$
we have $z_{\widetilde m}(k) = \frac{\widetilde 
m (1 + k^2)}{1 - k^2} \in \bc \setminus \big\lbrace 
(-\infty,-m] \cup [m,+\infty) \big\rbrace$, where
$\widetilde m \in \lbrace \pm m \rbrace$,
then this together with Lemmas \ref{l4,1}-\ref{l4,2}, 
\eqref{eq4,2} and \eqref{eqv} give the 
following 

\begin{lem}\label{l4,3} 
For $\widetilde m \in \lbrace \pm m \rbrace$ 
and $z_{\widetilde m}(k) = \frac{\widetilde 
m (1 + k^2)}{1 - k^2}$, the operator-valued 
functions
$$
\mathcal{D}_\pm^\ast (\epsilon) \ni k \longmapsto 
\mathcal{T}_{V} \big( z_{\widetilde m}(k) \big) := 
\Tilde{J} \vert V \vert^{\frac{1}{2}} \big( D_m(b,0) - 
z_{\widetilde m}(k) \big)^{-1} \vert V 
\vert^{\frac{1}{2}}
$$ 
are holomorphic with values in $\sqq \big( 
L^{2}(\brrr^3) \big)$, $\Tilde{J}$ being defined 
by the polar decomposition $V = \Tilde{J} \vert 
V \vert$.
\end{lem}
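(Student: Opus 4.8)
The plan is to reduce the statement to Lemmas~\ref{l4,1} and~\ref{l4,2} via the resolvent splitting \eqref{eq4,2} together with the factorisation \eqref{eqv} of $\vert V\vert^{\frac{1}{2}}$. For $z\in\bc\setminus{\rm sp}\,\big(D_m(b,0)\big)$ I would write
\[
\mathcal{T}_V(z)=\Tilde{J}\,\vert V\vert^{\frac{1}{2}}\big(D_m(b,0)-z\big)^{-1}\textup{\textbf{P}}\,\vert V\vert^{\frac{1}{2}}+\Tilde{J}\,\vert V\vert^{\frac{1}{2}}\big(D_m(b,0)-z\big)^{-1}\textup{\textbf{Q}}\,\vert V\vert^{\frac{1}{2}}
\]
and treat the two summands separately. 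Throughout I would use that $\vert V\vert^{\frac{1}{2}}$ is a bounded multiplication operator (because $V\in L^\infty$), that $\Tilde{J}$ is a partial isometry, and that $\sqq$ is a two-sided operator ideal (Appendix~A), so that composing an $\sqq$-valued holomorphic operator function with fixed bounded operators on either side preserves both $\sqq$-membership and holomorphy.

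As a preparatory step I would record two ways of reading \eqref{eqv}. Set $\tau:=\beta/2$, so that $\tau>3/2>1/2$ by Assumption~$1.1$. Writing $G^{\frac{1}{2}}(x_3)=\langle x_3\rangle^{-\tau}\big(\langle x_3\rangle^{\tau}G^{\frac{1}{2}}(x_3)\big)$, the second factor being bounded since $G(x_3)\lesssim\langle x_3\rangle^{-\beta}$, and rearranging the mutually commuting multiplications in the variables $(x_1,x_2)$ and $x_3$, the identity \eqref{eqv} turns into an operator identity $\vert V\vert^{\frac{1}{2}}=B\,\big(F_\perp^{\frac{1}{2}}(x_1,x_2)\,\langle x_3\rangle^{-\tau}\big)$ with $B:=\mathscr{V}\,\langle x_3\rangle^{\tau}G^{\frac{1}{2}}(x_3)\in\mathscr{L}\big(L^2(\brrr^3)\big)$. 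On the other hand, \eqref{eqv} also reads $\vert V\vert^{\frac{1}{2}}=\mathscr{V}\,g$ with $g(x):=F_\perp^{\frac{1}{2}}(x_1,x_2)\,G^{\frac{1}{2}}(x_3)\in L^q(\brrr^3)$ for $q\geq4$, since $\Vert g\Vert_{L^q}^q=\Vert F_\perp^{\frac{1}{2}}\Vert_{L^q(\brrr^2)}^q\,\Vert G^{\frac{1}{2}}\Vert_{L^q(\brrr)}^q$, where $F_\perp\in L^{q/2}(\brrr^2)$ and $\int_\brrr G^{q/2}\lesssim\int_\brrr\langle x_3\rangle^{-\beta q/2}\,dx_3<\infty$ because $\beta q/2>1$; moreover $F_\perp^{\frac{1}{2}}\in L^q(\brrr^2)$.

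With these preparations I turn to the two summands. In the $\textup{\textbf{P}}$-summand I would substitute $\vert V\vert^{\frac{1}{2}}=B\big(F_\perp^{\frac{1}{2}}\langle x_3\rangle^{-\tau}\big)$ on the left and apply Lemma~\ref{l4,1} with $U=F_\perp^{\frac{1}{2}}$: the factor $F_\perp^{\frac{1}{2}}\langle x_3\rangle^{-\tau}\big(D_m(b,0)-z\big)^{-1}\textup{\textbf{P}}$ is holomorphic on $\bc\setminus{\rm sp}\,\big(D_m(b,0)\big)$ with values in $\sqq\big(L^2(\brrr^3)\big)$, and flanking it by the bounded operators $\Tilde{J}B$ and $\vert V\vert^{\frac{1}{2}}$ leaves this unchanged. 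In the $\textup{\textbf{Q}}$-summand I would substitute $\vert V\vert^{\frac{1}{2}}=\mathscr{V}g$ on the left and apply Lemma~\ref{l4,2}: the factor $g\big(D_m(b,0)-z\big)^{-1}\textup{\textbf{Q}}$ is holomorphic with values in $\sqq\big(L^2(\brrr^3)\big)$ on $\bc\setminus\bigl\{(-\infty,-\sqrt{m^2+\zeta}\,]\cup[\,\sqrt{m^2+\zeta},+\infty)\bigr\}$, a set containing $\bc\setminus{\rm sp}\,\big(D_m(b,0)\big)$ because $\zeta>0$, and the bounded flanking factors are again harmless. Adding the two summands, $z\mapsto\mathcal{T}_V(z)$ is holomorphic on $\bc\setminus{\rm sp}\,\big(D_m(b,0)\big)$ with values in $\sqq\big(L^2(\brrr^3)\big)$. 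Finally I would compose with $k\mapsto z_{\widetilde m}(k)=\widetilde m(1+k^2)/(1-k^2)$, which is holomorphic on $\mathcal{D}_\pm^\ast(\epsilon)$, its only poles $k=\pm1$ lying outside since $\epsilon<\eta/m<1$, and which by \eqref{eq2,11} maps $\mathcal{D}_\pm^\ast(\epsilon)$ into $\bc\setminus\big((-\infty,-m]\cup[m,+\infty)\big)=\bc\setminus{\rm sp}\,\big(D_m(b,0)\big)$. The composition is then the asserted $\sqq$-valued holomorphic function on $\mathcal{D}_\pm^\ast(\epsilon)$.

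I do not foresee a genuine obstacle, the lemma being essentially a repackaging of Lemmas~\ref{l4,1} and~\ref{l4,2}. The only points calling for some care are the bookkeeping of the preparatory step, i.e.\ matching the single factorisation \eqref{eqv} of $\vert V\vert^{\frac{1}{2}}$ to the two slightly different hypotheses (an $L^q(\brrr^2)$-factor together with the decay weight $\langle x_3\rangle^{-\tau}$, forced by the one-dimensional Laplacian $-\partial_{x_3}^2$ that governs the $\textup{\textbf{P}}$-channel, for Lemma~\ref{l4,1}, as against a single $L^q(\brrr^3)$-factor for the genuinely three-dimensional $\textup{\textbf{Q}}$-channel in Lemma~\ref{l4,2}), and the verification that $z_{\widetilde m}\big(\mathcal{D}_\pm^\ast(\epsilon)\big)$ stays inside the domains of holomorphy provided by both lemmas.
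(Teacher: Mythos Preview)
Your proposal is correct and follows precisely the route the paper itself indicates: the lemma is stated there as an immediate consequence of the splitting \eqref{eq4,2}, Lemmas~\ref{l4,1}--\ref{l4,2}, the factorisation \eqref{eqv}, and the fact that $z_{\widetilde m}(k)$ lands in the resolvent set for $k\in\mathcal{D}_\pm^\ast(\epsilon)$. You have simply spelled out the bookkeeping (the two readings of \eqref{eqv} adapted to the hypotheses of each lemma, and the domain inclusions) that the paper leaves implicit.
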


\subsection{Reduction of the problem}

We show how we can reduce the investigation of the 
discrete spectrum of $D_m(b,V)$ to that 
of zeros of holomorphic functions.
\\

\noindent
In the sequel, the definition of the $q$-regularized 
determinant $\textup{det}_{\lceil q \rceil}(\cdot)$ 
is recalled in Appendix A by \eqref{eq3,2}. 
As in Lemma \ref{l4,3}, the operator-valued function
$V \big( D_m(b,0) - \cdot \big)^{-1}$ is analytic 
on $\mathcal{D}^\pm_{\widetilde m} (\eta)$ with 
values in $\sqq \big( L^{2}(\brrr^3) \big)$. Hence, 
the following characterisation
\begin{equation}\label{eq4,38}
z \in {\rm sp}\,_{\textup{\textbf{disc}}} \big( 
D_m(b,V) \big) \Leftrightarrow f(z) := 
\textup{det}_{\lceil q \rceil} \left( I + V 
\big( D_m(b,0) - z \big)^{-1} \right) = 0
\end{equation}
holds; see for instance \cite[Chap. 9]{sim} 
for more details. The fact that the operator-valued 
function $V \big( D_m(b,0) - \cdot \big)$ 
is holomorphic on $\mathcal{D}^\pm_{\widetilde m} 
(\eta)$ implies that the same happens for the 
function $f(\cdot)$ by Property \textbf{d)} 
of Section \ref{s3,1}. Furthermore, the 
algebraic multiplicity of $z$ as discrete 
eigenvalue of $D_m(b,V)$ is equal to its order 
as zero of $f(\cdot)$ (this claim is a well 
known fact, see for instance 
\cite[Proof of Theorem 4.10 (v)]{hans} for an
idea of  proof).
\\

\noindent
In the next proposition, the quantity 
$Ind_{\mathscr{C}}( \cdot )$ in the RHS 
of \eqref{eq4,39} is recalled in Appendix B
by \eqref{eqa,2}.


\begin{prop}\label{p4,1}  
The following assertions are equivalent:

$\textup{\textbf{(i)}}$ $z_{\widetilde m}(k_0) = 
\frac{\widetilde m (1 + k_0^2)}{1 - k_0^2} \in 
\mathcal{D}^\pm_{\widetilde m} (\eta)$ is a 
discrete eigenvalue of $D_m(b,V)$,

$\textup{\textbf{(ii)}}$ 
$\textup{det}_{\lceil q \rceil} \left( I + 
\mathcal{T}_{V} \big( z_{\widetilde m}(k_0) \right) 
\big) = 0$,

$\textup{\textbf{(iii)}}$ $-1$ is an eigenvalue 
of $\mathcal{T}_{V} \big( z_{\widetilde m}(k_0) 
\big)$.
\\
Moreover,
\begin{equation}\label{eq4,39}
\textup{mult} \big( z_{\widetilde m}(k_0) \big) = 
Ind_{\mathscr{C}} \hspace{0.5mm} 
\Big( I + \mathcal{T}_{V}\big( z_{\widetilde m}
(\cdot) \big) \Big),
\end{equation}
$\mathscr{C}$ being a small contour positively 
oriented, containing $k_{0}$ as the unique point 
$k \in \mathcal{D}_\pm^\ast(\epsilon)$ verifying 
$z_{\widetilde m}(k) \in 
\mathcal{D}^\pm_{\widetilde m} (\eta)$ is a 
discrete eigenvalue of $D_m(b,V)$.
\end{prop}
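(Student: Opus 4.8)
The plan is to establish the three equivalences first and then the multiplicity formula, using the Birman--Schwinger type reduction together with the factorization $V = \tilde J \vert V\vert$ and $\vert V\vert^{1/2}$ already set up in Lemma \ref{l4,3}. First I would prove $\textbf{(i)} \Leftrightarrow \textbf{(ii)}$: by \eqref{eq4,38}, $z_{\widetilde m}(k_0)$ is a discrete eigenvalue of $D_m(b,V)$ if and only if $\textup{det}_{\lceil q\rceil}\big(I + V(D_m(b,0) - z_{\widetilde m}(k_0))^{-1}\big) = 0$. The standard fact that $\textup{det}_{\lceil q\rceil}(I + AB) = \textup{det}_{\lceil q\rceil}(I + BA)$ for $A, B$ with $AB, BA$ in the appropriate Schatten class (see \cite[Chap. 9]{sim}) lets me cycle the factors: writing $V(D_m(b,0) - z)^{-1} = \vert V\vert^{1/2}\cdot \tilde J\vert V\vert^{1/2}(D_m(b,0)-z)^{-1}$, I move $\vert V\vert^{1/2}$ from the front to the back and obtain that this determinant equals $\textup{det}_{\lceil q\rceil}\big(I + \tilde J\vert V\vert^{1/2}(D_m(b,0) - z)^{-1}\vert V\vert^{1/2}\big) = \textup{det}_{\lceil q\rceil}\big(I + \mathcal{T}_V(z_{\widetilde m}(k_0))\big)$. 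This gives the equivalence of \textbf{(i)} and \textbf{(ii)}.

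Next I would prove $\textbf{(ii)} \Leftrightarrow \textbf{(iii)}$. Since $\mathcal{T}_V(z_{\widetilde m}(k_0)) \in \sqq$ is compact, $\textup{det}_{\lceil q\rceil}\big(I + \mathcal{T}_V(z_{\widetilde m}(k_0))\big) = 0$ if and only if $-1 \in \sigma\big(\mathcal{T}_V(z_{\widetilde m}(k_0))\big)$, i.e. $\textrm{Ker}\big(I + \mathcal{T}_V(z_{\widetilde m}(k_0))\big) \neq \{0\}$; this is the defining property of the regularized determinant together with the fact that, for a compact operator, a nonzero point of the spectrum is an eigenvalue. The key point is that each $\mathcal{T}_V(z_{\widetilde m}(k))$ lies in $\sqq$ with $q \geq 4 > \lceil q\rceil$ issues handled by the $q$-regularization, so $\textup{det}_{\lceil q\rceil}$ is well defined and has the stated vanishing property.

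For the multiplicity formula \eqref{eq4,39}, the approach is to relate the order of vanishing of $f(\cdot)$ at $z_{\widetilde m}(k_0)$ — which by the remark preceding the proposition equals $\textup{mult}(z_{\widetilde m}(k_0))$ — to the index $Ind_{\mathscr{C}}$ of the finite-meromorphic operator-valued function $I + \mathcal{T}_V(z_{\widetilde m}(\cdot))$. Here I would use the general principle (recalled in Appendix B) that for a holomorphic operator-valued function $\mathcal{A}(\cdot)$ of the form identity-plus-trace-class-valued, the index along a contour $\mathscr{C}$ equals the winding number of $z \mapsto \textup{det}(\mathcal{A}(z))$, which in turn equals the sum of the orders of the zeros of that determinant inside $\mathscr{C}$; combining this with the invariance $\textup{det}_{\lceil q\rceil}\big(I + V(D_m(b,0) - z_{\widetilde m}(k))^{-1}\big) = \textup{det}_{\lceil q\rceil}\big(I + \mathcal{T}_V(z_{\widetilde m}(k))\big)$ from the first step and the chain rule for orders of zeros under the holomorphic change of variables $k \mapsto z_{\widetilde m}(k)$ (which is biholomorphic near $k_0$ since $z_{\widetilde m}'(k_0) \neq 0$ by Remark \ref{r2,1}\textbf{(ii)}), I get that $Ind_{\mathscr{C}}\big(I + \mathcal{T}_V(z_{\widetilde m}(\cdot))\big)$ equals the order of $f$ at $z_{\widetilde m}(k_0)$, hence equals $\textup{mult}(z_{\widetilde m}(k_0))$.

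The main obstacle is the bookkeeping around the regularized rather than ordinary determinant: the clean identity $\textup{det}(I+AB) = \textup{det}(I+BA)$ and the equality of winding number with sum of zero-orders are classical only for trace-class perturbations, and one must invoke the correct $\sqq$-version of the determinant-cycling property and verify that the $q$-regularization does not introduce spurious factors when the factors $\vert V\vert^{1/2}$ and $\tilde J \vert V\vert^{1/2}(D_m(b,0)-z)^{-1}$ are only in $\textbf{\textup{S}}_{2q}$-type classes. I expect this to be handled by citing \cite[Chap. 9]{sim} for the relevant properties of $\textup{det}_{\lceil q\rceil}$, together with Property \textbf{d)} of Section \ref{s3,1} and the index material of Appendix B; the holomorphy needed throughout is exactly what Lemma \ref{l4,3} provides.
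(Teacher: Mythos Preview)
Your proposal is correct and follows essentially the same route as the paper: the equivalence \textbf{(i)}$\Leftrightarrow$\textbf{(ii)} via \eqref{eq4,38} and the cycling identity $\textup{det}_{\lceil q\rceil}(I+AB)=\textup{det}_{\lceil q\rceil}(I+BA)$ (Property \textbf{b)} of Appendix~A), \textbf{(ii)}$\Leftrightarrow$\textbf{(iii)} via Property \textbf{c)}, and the multiplicity formula by identifying $\textup{mult}(z_{\widetilde m}(k_0))$ with $ind_{\mathscr C'}f$ and then passing to $Ind_{\mathscr C}\big(I+\mathcal T_V(z_{\widetilde m}(\cdot))\big)$. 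The only difference is cosmetic: where you sketch the last step via winding numbers and the biholomorphic change $k\mapsto z_{\widetilde m}(k)$, the paper simply invokes \cite[(2.6)]{bo}, which packages exactly that argument (including the $\sqq$-regularized case you flag as an obstacle).
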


\begin{proof}
The equivalence \textbf{(i)} $\Leftrightarrow$ 
\textbf{(ii)} follows obviously from 
\eqref{eq4,38} and the equality 
$$
\textup{det}_{\lceil q \rceil} \left( I + V 
\big( D_m(b,0) - z \big)^{-1} \right) = 
\textup{det}_{\lceil q \rceil} \left( I + 
\Tilde{J} \vert V \vert^{\frac{1}{2}} \big( 
D_m(b,0) - z \big)^{-1} \vert V 
\vert^{\frac{1}{2}} \right),
$$
see Property \textbf{b)} of Section \ref{s3,1}.

The equivalence \textbf{(ii)} $\Leftrightarrow$ 
\textbf{(iii)} is a direct consequence of 
Property \textbf{c)} of Section \ref{s3,1}.

It only remains to prove \eqref{eq4,39}. 
According to the discussion just after 
\eqref{eq4,38}, for $\mathscr{C}'$ a small 
contour positively oriented containing 
$z_{\widetilde m}(k_0)$ as the unique 
discrete eigenvalue of $D_m(b,V)$, we have
\begin{equation}\label{eq4,40}
\textup{mult} \big( z_{\widetilde m}(k_0) 
\big) = ind_{\mathscr{C}'} f,
\end{equation}
$f$ being the function defined by 
\eqref{eq4,38}. The RHS of \eqref{eq4,40} 
is the index defined by \eqref{eqa,1}, of 
the holomorphic function $f$ with respect to 
$\mathscr{C}'$. Now, \eqref{eq4,39} follows 
directly from the equality
$$
ind_{\mathscr{C}'} f = Ind_{\mathscr{C}} 
\hspace{0.5mm} \Big( I + \mathcal{T}_{V} 
\big( z_{\widetilde m}(\cdot) \big) \Big),
$$
see for instance \cite[(2.6)]{bo} for more 
details. This concludes the proof.
\end{proof}

\section{Study of the (weighted) free resolvent}\label{s4}

We split $\mathcal{T}_{V} \big( z_{\widetilde 
m}(k) \big)$ into a singular part at $k = 0$, 
and an analytic part in $\mathcal{D}_\pm^\ast
(\epsilon)$ which is continuous on 
$\overline{\mathcal{D}_\pm^\ast(\epsilon)}$,
with values in $\sqq \big( L^{2}(\brrr^3) \big)$. 
\\

\noindent
For $z := z_{\pm m}(k)$, set
\begin{equation}\label{eq5,1}
\small{\mathcal{T}_{1}^{V} \big( z_{\pm m}(k) 
\big) := \tilde{J} \vert V \vert^{1/2} 
\left[ p \otimes \mathscr{R} \left( k^{2} (z 
\pm m)^{2} \right) \right] 
\left( \begin{smallmatrix}
   z + m & 0 & 0 & 0\\
   0 & 0 & 0 & 0\\
   0 & 0 & z - m & 0\\
   0 & 0 & 0 & 0
\end{smallmatrix} \right) \vert V \vert^{1/2},}
\end{equation}
\begin{equation}\label{eq5,2}
\begin{split}
\small{\mathcal{T}_{2}^{V} \big( z_{\pm m}(k) 
\big) := \tilde{J} \vert} & \small{V \vert^{1/2} 
\left[ p \otimes (-i\partial_{x_3}) \mathscr{R} 
\left( k^{2} (z \pm m)^{2} \right) \right] 
\left( 
\begin{smallmatrix}
   0 & 0 & 1 & 0\\
   0 & 0 & 0 & 0\\
   1 & 0 & 0 & 0\\
   0 & 0 & 0 & 0
\end{smallmatrix} \right) \vert V \vert^{1/2}} \\
& \small{+ \tilde{J} \vert V \vert^{1/2} \big( D_m(b,0) - z 
\big)^{-1} \textup{\textbf{Q}} \vert V \vert^{1/2}.}
\end{split}
\end{equation}
Then, \eqref{eq4,2} combined with \eqref{eq4,4}
imply that
\begin{equation}\label{eq5,3}
\mathcal{T}_{V} \big( z_{\pm m}(k) \big) = 
\mathcal{T}_{1}^{V} \big( z_{\pm m}(k) \big) + 
\mathcal{T}_{2}^{V} \big( z_{\pm m}(k) \big).
\end{equation}

\begin{rem}\label{r5,1}
\textbf{(i)} For $z = z_m(k)$, we have 
\begin{equation}
\small{\im \big( k(z + m) \big) = 
\frac{2m (1 + \vert k \vert^2) 
\im(k)}{\vert 1 + k^2 \vert^2}.} 
\end{equation}
Therefore, according to the choice 
\eqref{eq2,7} of the complex square root, 
we have respectively 
\begin{equation}\label{eq5,4}
\small{\sqrt{k^2(z + m)^2} = \pm k(z + m) 
\quad for \quad k \in \mathcal{D}_\pm^\ast
(\epsilon).}
\end{equation}

\textbf{(ii)} In the case $z = z_{-m}(k)$,
we have
\begin{equation} 
\small{\im \big( k(z - m) \big) = -\frac{2m 
(1 + \vert k \vert^2) \im(k)}{\vert 1 + k^2 
\vert^2}},
\end{equation}
so that
\begin{equation}\label{eq5,5}
\small{\sqrt{k^2(z - m)^2} = \mp k(z - m) 
\quad for \quad k \in \mathcal{D}_\pm^\ast
(\epsilon).}
\end{equation}
\end{rem}

\noindent
In what follows below, we focus on the study
of the operator $\mathcal{T}_{V} \big( z_m(k) 
\big)$, i.e. near $m$. The same arguments yield 
that of the operator $\mathcal{T}_{V} \big( 
z_{-m}(k) \big)$ associated to $-m$, see Remark 
\ref{r5,2}.
\\

\noindent
Defining $G_\pm$ as the multiplication operators 
by the functions $G_\pm : \brrr \ni x_3 \mapsto 
G^{\pm \frac{1}{2}}(x_3)$, we have
\begin{equation}\label{eq5,6}
\begin{split}
\mathcal{T}_{1}^{V} \big( z_{m}(k) \big) = 
\tilde{J} \vert V \vert^{1/2} G_- 
\Big[ p \otimes & G_+ \mathscr{R} \big( k^{2} (z 
+ m)^{2} \big) G_+ \Big] \\
& \times \left( \begin{smallmatrix}
   z + m & 0 & 0 & 0\\
   0 & 0 & 0 & 0\\
   0 & 0 & z - m & 0\\
   0 & 0 & 0 & 0
\end{smallmatrix} \right) G_- \vert V \vert^{1/2}.
\end{split}
\end{equation}
Item \textbf{(i)} of Remark \ref{r5,1} 
together with \eqref{eq4,5} imply that 
$G_+ \mathscr{R} \left( k^{2} (z + m)^{2} 
\right) G_+$ admits the integral kernel
\begin{equation}\label{eq5,7}
\pm G^{\frac{1}{2}}(x_3) 
\frac{i e^{\pm i k(z+m) \vert x_3 - x_3' 
\vert}}{2 k(z+m)} G^{\frac{1}{2}}(x_3'), \quad k 
\in \mathcal{D}_\pm^\ast(\epsilon).
\end{equation}
Then, from \eqref{eq5,7} we deduce that
\begin{equation}\label{eq5,8}
G_+ \mathscr{R} \left( k^{2} (z + m)^{2} 
\right) G_+ = \pm \frac{1}{k(z + m)}a + b_m(k), 
\quad k \in \mathcal{D}_\pm^\ast(\epsilon),
\end{equation}
where $a : L^{2}(\mathbb{R}) \longrightarrow L^{2}
(\mathbb{R})$ 
is the rank-one operator given by 
\begin{equation}\label{eq5,9}
a(u) := \frac{i}{2} \big\langle u,G_+ \big\rangle 
G_+,
\end{equation}
and $b_m(k)$ is the operator with integral 
kernel
\begin{equation}\label{eq5,10}
\pm G^{\frac{1}{2}}(x_3) i \frac{ \textup{e}^{\pm 
i k (z + m)\vert x_3 - x_3' \vert} - 1}{2 k(z + m)} 
G^{\frac{1}{2}}(x_3').
\end{equation}
Note that $-2ia = c^\ast c$, where
$c : L^{2}(\mathbb{R}) \longrightarrow \mathbb{C}$ 
satisfies $c(u) := \langle u,G_+ \rangle$ and 
$c^{\ast} : \mathbb{C} \longrightarrow 
L^{2}(\mathbb{R})$ verifies $c^{\ast}(\lambda) = 
\lambda G_+$. Therefore, by combining 
\eqref{eq5,8}, \eqref{eq5,9} with \eqref{eq5,10}, 
we get for $k \in \mathcal{D}_\pm^\ast(\epsilon)$
\begin{equation}\label{eq5,11}
\begin{aligned}
p \otimes G_+ \mathscr{R} \left( k^{2} (z + m)^{2} 
\right) G_+ = \pm \frac{i}{2k(z + m)} p 
\otimes c^\ast c + p \otimes s_m(k),
\end{aligned}
\end{equation}
where $s_m(k)$ is the operator acting from 
$G^{\frac{1}{2}}(x_3) L^{2}(\mathbb{R})$ 
to $G^{-\frac{1}{2}}(x_3) L^{2}(\mathbb{R})$ with 
integral kernel
\begin{equation}\label{eq5,12}
\pm \frac{ 1 - \textup{e}^{\pm i k(z + m) \vert 
x_3 - x_3' \vert}}{2 i k(z + m)}.
\end{equation}
In Remark \ref{r5,2}, $s_{-m}(k)$ is the 
corresponding operator with $m$ replaced by 
$-m$ and $\pm$ replaced by $\mp$ in 
\eqref{eq5,12}. Now, putting together 
\eqref{eq5,6} and \eqref{eq5,11}, we get for 
$k \in 
\mathcal{D}_\pm^\ast(\epsilon)$
\begin{equation}\label{eq5,13}
\begin{split} 
\mathcal{T}_{1}^{V} \big( z_{m}(k) \big) & = 
\pm \frac{i\Tilde{J}}{2k(z + m)} \vert V 
\vert^{\frac{1}{2}} G_- (p \otimes c^\ast c) 
\left( \begin{smallmatrix}
   z + m & 0 & 0 & 0 \\
   0 & 0 & 0 & 0 \\
   0 & 0 & z - m & 0 \\
   0 & 0 & 0 & 0
\end{smallmatrix} \right) 
G_- \vert V \vert^{\frac{1}{2}} \\
& + \Tilde{J} \vert V \vert^{\frac{1}{2}} 
G_- p \otimes s_m(k) 
\left( \begin{smallmatrix}
   z + m & 0 & 0 & 0 \\
   0 & 0 & 0 & 0 \\
   0 & 0 & z - m & 0 \\
   0 & 0 & 0 & 0
\end{smallmatrix} \right) G_- 
\vert V \vert^{\frac{1}{2}}.
\end{split}
\end{equation}
Introduce the operator
\begin{equation}\label{eq5,14}
K_{\pm m} := \frac{1}{\sqrt{2}} (p \otimes c) 
\left( \begin{smallmatrix}
   1 - 1_\mp & 0 & 0 & 0 \\
   0 & 0 & 0 & 0 \\
   0 & 0 & 1 - 1_\pm & 0 \\
   0 & 0 & 0 & 0
\end{smallmatrix} \right)
G_{-} \vert V \vert^{\frac{1}{2}}, 
\qquad 1_- = 0, \quad 1_+ = 1.
\end{equation} 
It is well known from \cite[Theorem 2.3]{hal} 
that $p$ admits a continuous integral kernel 
$\mathcal{P}(\xp,\xp')$, $\xp$, 
$\xp' \in \brrr^{2}$. Then, we have
$K_{\pm m} : L^{2}(\brrr^{3}) \longrightarrow 
L^{2}(\brrr^{2})$ with 
\begin{align*}
\begin{split}
(K_{\pm m} \psi)(\xp) = \frac{1}{\sqrt{2}} 
\int_{\brrr^{3}} & {\mathcal P} 
(\xp,\xp^\prime) \\
& \times \left( \begin{smallmatrix}
   1 - 1_\mp & 0 & 0 & 0 \\
   0 & 0 & 0 & 0 \\
   0 & 0 & 1 - 1_\pm & 0 \\
   0 & 0 & 0 & 0
\end{smallmatrix} \right) \vert V \vert^{\frac{1}{2}} (\xp^\prime,
x_3^\prime) \psi (\xp^\prime,x_3^\prime)
d\xp^\prime dx_3^\prime.
\end{split}
\end{align*}
Obviously, the operator
$K_{\pm m}^{\ast} : L^{2}(\brrr^{2}) 
\longrightarrow L^{2}(\brrr^{3})$ satisfies
$$
(K_{\pm m}^{\ast}\varphi)(\xp,x_3) = 
\frac{1}{\sqrt{2}} \vert V \vert^{\frac{1}{2}} 
(\xp,x_3) 
\left( \begin{smallmatrix}
   1 - 1_\mp & 0 & 0 & 0 \\
   0 & 0 & 0 & 0 \\
   0 & 0 & 1 - 1_\pm & 0 \\
   0 & 0 & 0 & 0
\end{smallmatrix} \right)
(p \varphi)(\xp).
$$
Noting that
$K_{\pm m} K_{\pm m}^{\ast} : L^{2}(\brrr^{2}) 
\longrightarrow L^{2}(\brrr^{2})$ 
verifies
\begin{equation}\label{eq5,15}
K_{\pm m} K_{\pm m}^{\ast} = 
\left( \begin{smallmatrix}
   1 - 1_\mp & 0 & 0 & 0 \\
   0 & 0 & 0 & 0 \\
   0 & 0 & 1 - 1_\pm & 0 \\
   0 & 0 & 0 & 0
\end{smallmatrix} \right)
p \textbf{\textup{V}}_{\pm m} p,
\end{equation}
$\textbf{\text{V}}_{\pm m}$ being the 
multiplication operators by the functions
(also noted) $\textbf{\text{V}}_{\pm m}$ 
defined by \eqref{eq2,2}.
Thus, by combining \eqref{eq5,13} and 
\eqref{eq5,14} we obtain for $k \in 
\mathcal{D}_\pm^\ast(\epsilon)$
\begin{equation}\label{eq5,16}
\begin{split}
\mathcal{T}_{1}^{V} \big( z_{m}(k) \big) & = 
\pm \frac{i\Tilde{J}}{k} K_m^\ast K_m + 
i\Tilde{J} k K_{-m}^\ast K_{-m} + \\
& + \Tilde{J} \vert V \vert^{\frac{1}{2}} 
G_- p \otimes s_m(k) 
\left( \begin{smallmatrix}
   z + m & 0 & 0 & 0\\
   0 & 0 & 0 & 0\\
   0 & 0 & z - m & 0\\
   0 & 0 & 0 & 0
\end{smallmatrix} \right) G_- 
\vert V \vert^{\frac{1}{2}}.
\end{split}
\end{equation}
Now, for $\lambda \in \brrr_+^\ast$, we 
define $\big( -\partial_{x_3}^2 - \lambda 
\big)^{-1}$ as the operator with integral kernel
\begin{equation}\label{eq5,17}
\displaystyle I_\lambda(x_3,x_3') := 
\lim_{\delta \downarrow 0} I_{\lambda + i\delta} 
(x_3,x_3') = \frac{ie^{i\sqrt{\lambda} 
\vert x_3 - x_3' \vert}} {2\sqrt{\lambda}}.
\end{equation}
where $I_z(\cdot)$ is given by \eqref{eq4,5}. 
Therefore, it can be proved, using a limiting 
absorption principle, that the operator-valued 
functions $\overline{\mathcal{D}_\pm^\ast(\epsilon)} 
\ni k \mapsto G_+ s_m(k) G_+ \in \sd \big( L^{2}
(\brrr) \big)$ are well defined and continuous 
similarly to \cite[Proposition 4.2]{raik}. 
We thus arrive to the following

\begin{prop}\label{p5,1} 
Let $k \in \mathcal{D}_\pm^\ast(\epsilon)$. 
Then, 
\begin{equation}\label{eq5,18}
\mathcal{T}_{V} \big( z_m(k) \big) = 
\pm \frac{i\Tilde{J}}{k} \mathscr{B}_m
+ \mathscr{A}_m(k), \quad \mathscr{B}_m := 
K_m^\ast K_m,
\end{equation}
where $\mathscr{A}_m(k) \in 
\sqq \big( L^{2}(\brrr^3) \big)$ given by
\begin{align*}
\mathscr{A}_m(k) := i\Tilde{J} k K_{-m}^\ast K_{-m}
+ & \Tilde{J} \vert V \vert^{\frac{1}{2}} G_- p 
\otimes s_m(k) \\
& \times \left( \begin{smallmatrix}
   z + m & 0 & 0 & 0\\
   0 & 0 & 0 & 0\\
   0 & 0 & z - m & 0\\
   0 & 0 & 0 & 0
\end{smallmatrix} \right)
G_- \vert V \vert^{\frac{1}{2}}
 + \mathcal{T}_{2}^{V} \big( z_m(k) \big)
\end{align*}
is holomorphic in $\mathcal{D}_\pm^\ast(\epsilon)$ 
and continuous on $\overline{\mathcal{D}_\pm^\ast
(\epsilon)}$.
\end{prop}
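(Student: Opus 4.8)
The plan is to obtain the identity \eqref{eq5,18} directly from the computations already carried out and then to verify, summand by summand, the three structural properties claimed for $\mathscr{A}_m(k)$. Substituting \eqref{eq5,16} into \eqref{eq5,3} and setting $\mathscr{B}_m:=K_m^\ast K_m$, one reads off \eqref{eq5,18} with $\mathscr{A}_m(k)$ equal to the sum of the three operators displayed in the statement; it thus remains to check that each of them lies in $\sqq\big(L^2(\brrr^3)\big)$, is holomorphic in $k\in\mathcal{D}_\pm^\ast(\epsilon)$, and extends continuously to $\overline{\mathcal{D}_\pm^\ast(\epsilon)}$. The summand $i\tilde{J}k\,K_{-m}^\ast K_{-m}$ is linear, hence entire, in $k$, so the only point is $K_{-m}^\ast K_{-m}\in\sqq$; since $A^\ast A$ and $AA^\ast$ share the same non-zero singular values, it suffices by \eqref{eq5,15} to bound $\Vert p\,\textbf{\textup{V}}_{-m}\,p\Vert_\sqq=\Vert\textbf{\textup{V}}_{-m}^{1/2}p\Vert_{\textbf{\textup{S}}_{2q}}^{2}$, and \eqref{eq4,11} (the adaptation of \cite[Lemma 2.4]{raik}, used with $2q$ in place of $q$) together with Assumption 1.1, which gives $\textbf{\textup{V}}_{-m}\lesssim\sqrt{F_\perp}$ with $F_\perp\in L^{q/2}\cap L^\infty$, yields $\Vert\textbf{\textup{V}}_{-m}^{1/2}p\Vert_{\textbf{\textup{S}}_{2q}}^{2q}\lesssim\Vert\textbf{\textup{V}}_{-m}\Vert_{L^q}^{q}\lesssim\Vert F_\perp\Vert_{L^{q/2}}^{q/2}<\infty$. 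Hence this summand is $\sqq$-valued and entire, in particular continuous on $\overline{\mathcal{D}_\pm^\ast(\epsilon)}$.

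For the summand $\mathcal{T}_2^V\big(z_m(k)\big)$ I would use its expression \eqref{eq5,2}. The part $\tilde{J}\vert V\vert^{1/2}\big(D_m(b,0)-z\big)^{-1}\textup{\textbf{Q}}\vert V\vert^{1/2}$ is handled by Lemma \ref{l4,2} applied with $g$ a bounded multiple of $\sqrt{F_\perp}\,\sqrt{G}\in L^q(\brrr^3)$ (recall \eqref{eqv}): for $\vert k\vert$ small, $z=z_m(k)$ stays in a neighbourhood of $m$ disjoint from $(-\infty,-\sqrt{m^2+\zeta}\,]\cup[\,\sqrt{m^2+\zeta},+\infty)$, so this part is $\sqq$-valued and holomorphic---indeed real-analytic---up to $k=0$. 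The other part equals $\tilde{J}\vert V\vert^{1/2}L_2(z_m(k))\vert V\vert^{1/2}$ with $L_2$ as in \eqref{eq4,8} (using $k^2(z+m)^2=z^2-m^2$ for $z=z_m(k)$), and is treated as in \eqref{eq4,16}--\eqref{eq4,19}; here, however, the two-sided weight $\vert V\vert^{1/2}$ puts $\sqrt{G}(x_3)$-weights on both sides of $(-i\partial_{x_3})\mathscr{R}\big(k^2(z+m)^2\big)$, whose kernel is, by \eqref{eq5,4} and $\big\vert e^{\pm i k(z+m)\vert x_3-x_3'\vert}\big\vert\le 1$ for $k\in\overline{\mathcal{D}_\pm^\ast(\epsilon)}$, pointwise dominated by $\tfrac12\langle x_3\rangle^{-\beta/2}\langle x_3'\rangle^{-\beta/2}$; since $\beta>3$ this removes the factor $\big(\im\sqrt{z^2-m^2}\big)^{-1/2}$ of \eqref{eq4,18} and makes the $\sd$-bound uniform in $k$, which, together with $\sd\subset\sqq$, the estimate \eqref{eq4,11}, and dominated convergence, gives membership in $\sqq$, holomorphy in the interior, and continuity on $\overline{\mathcal{D}_\pm^\ast(\epsilon)}$.

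Finally, for the middle summand---the one built from $p\otimes s_m(k)$---I would invoke the limiting absorption statement recalled just before the Proposition: $\overline{\mathcal{D}_\pm^\ast(\epsilon)}\ni k\mapsto G_+s_m(k)G_+\in\sd\big(L^2(\brrr)\big)$ is well defined, holomorphic on $\mathcal{D}_\pm^\ast(\epsilon)$, and continuous on the closure (in analogy with \cite[Proposition 4.2]{raik}, the borderline integrability being governed precisely by $\beta>3$). Using \eqref{eqv} to factor $\vert V\vert^{1/2}$ as a bounded matrix multiplier times $\sqrt{F_\perp}(x_1,x_2)\sqrt{G}(x_3)$, one pairs the two transverse weights with $p$ to obtain---by \eqref{eq4,11} and H\"older's inequality for Schatten ideals, $\tfrac1{2q}+\tfrac1{2q}=\tfrac1q$---an $\sqq$ operator in $(x_1,x_2)$, and the two longitudinal weights with $s_m(k)$ to obtain $G_+s_m(k)G_+\in\sd$; up to the bounded $k$-analytic factors $z\pm m$, the middle summand is the tensor product of these in $L^2(\brrr^2)\otimes L^2(\brrr)$, whence it lies in $\sqq$ and, as $\sd\subset\sqq$ and $G_+s_m(k)G_+$ is continuous on $\overline{\mathcal{D}_\pm^\ast(\epsilon)}$, it depends continuously on $k$ there and holomorphically in the interior. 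I expect this last input---the continuity of $k\mapsto G_+s_m(k)G_+$ across the threshold $k=0$, i.e. a limiting absorption principle for $-\partial_{x_3}^2$ at the bottom of its spectrum---to be the main obstacle; all the rest is bookkeeping with the explicit formulas \eqref{eq5,1}--\eqref{eq5,17} and the Schatten estimates of Lemmas \ref{l4,1}--\ref{l4,3}.
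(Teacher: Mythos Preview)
Your proposal is correct and follows essentially the same route as the paper: the identity \eqref{eq5,18} is read off from \eqref{eq5,3} and \eqref{eq5,16}, and the regularity of $\mathscr{A}_m(k)$ rests on the limiting absorption input $k\mapsto G_+s_m(k)G_+\in\sd$ stated just before the Proposition, together with Lemma~\ref{l4,2} for the $\textup{\textbf{Q}}$-part. You supply considerably more detail than the paper (which merely refers to \cite[Proposition~4.2]{raik} and the preceding computations), in particular the observation that the two-sided $G^{1/2}$-weight on $(-i\partial_{x_3})\mathscr{R}$ removes the singular factor $(\im\sqrt{z^2-m^2})^{-1/2}$ of \eqref{eq4,18}, and the Schatten bookkeeping via \eqref{eq4,11} and \eqref{eq5,15}; these additions are welcome and do not depart from the paper's strategy.
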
 

\begin{rem}\label{r5,2}
\textbf{(i)} Identity \eqref{eq5,15} implies that
for any $r > 0$, we have
\begin{equation}\label{eq5,19}
\textup{Tr} \hspace{0.4mm} \one_{(r,\infty)} 
\left( K_{\pm m}^\ast K_{\pm m} \right) 
= \textup{Tr} \hspace{0.4mm} \one_{(r,\infty)} 
\left( K_{\pm m} K_{\pm m}^\ast \right)
= \textup{Tr} \hspace{0.4mm} \one_{(r,\infty)} 
\big( p \textbf{\textup{V}}_{\pm m} p \big).
\end{equation}

\textbf{(ii)} For $V$ verifying Assumption $2.1$,
Proposition \ref{p5,1} holds with $\Tilde{J}$ 
replaced by $J\Phi$, $J := sign(W)$, and in 
\eqref{eq5,19} $\textbf{\textup{V}}_{\pm m}$ 
replaced by $\textbf{\textup{W}}_{\pm m}$.

\textbf{(iii)} Near $-m$, take in account 
item \textbf{(ii)} of Remark \ref{r5,1}, Proposition 
\ref{p5,1} holds with 
\begin{equation}\label{eq5,20}
\mathcal{T}_{V} \big( z_{-m}(k) \big) = 
\mp \frac{i\Tilde{J}}{k} \mathscr{B}_{-m}
+ \mathscr{A}_{-m}(k), \quad \mathscr{B}_{-m} := 
K_{-m}^\ast K_{-m},
\end{equation}
and
\begin{align*}
\mathscr{A}_{-m}(k) := i\Tilde{J} k K_{m}^\ast K_{m}
+ & \Tilde{J} \vert V \vert^{\frac{1}{2}} G_- p 
\otimes s_{-m}(k) \\
& \times \left( \begin{smallmatrix}
   z + m & 0 & 0 & 0\\
   0 & 0 & 0 & 0\\
   0 & 0 & z - m & 0\\
   0 & 0 & 0 & 0
\end{smallmatrix} \right)
G_- \vert V \vert^{\frac{1}{2}}
 + \mathcal{T}_{2}^{V} \big( z_{-m}(k) \big).
\end{align*}
\end{rem}

\section{Proof of the main results}\label{s5}

\subsection{Proof of Theorem \ref{t2,1}}\label{s6}

It suffices to prove that both sums in the LHS of 
\eqref{eq2,15} are bounded by the RHS. We only give 
the proof for the first sum. For the second 
one, the estimate follows similarly by using
Remark \ref{r2,1}-\textbf{(iii),(iv)}, Proposition 
\ref{p5,1}, and Remark \ref{r5,2}-\textbf{(i),(iii)}.
\\

\noindent
In what follows below,
$$
N \big( D_m(b,V) \big) := \big\lbrace \langle 
D_m(b,V)f,f \rangle : f \in Dom \big( D_m(b,V) \big), 
\Vert f \Vert_{L^{2}} = 1 \big\rbrace
$$ 
denotes the numerical range of the operator 
$D_m(b,V)$. It satisfies the inclusion ${\rm sp}\, 
\big( D_m(b,V) \big) \subseteq \overline{N \big( 
D_m(b,V) \big)}$, see e.g. \cite[Lemma 9.3.14]{dav}. 
\\

\noindent
The proof of the theorem uses the following 

\begin{prop}\label{p6,1} 
Let $0 < s_{0} < \epsilon$ be small enough. 
Then, for any $k \in \lbrace 0 < s < \vert k 
\vert < s_{0} \rbrace \cap \mathcal{D}_\pm^\ast
(\epsilon)$, the following properties hold:
 
$\textup{\textbf{(i)}}$ $z_{\pm m}(k) \in 
{\rm sp}\,_{\textup{\textbf{disc}}}^+ \big( 
D_m(b,V) \big)$ near $\pm m$ if and only if 
$k$ is a zero of the determinants
\begin{equation}\label{eq6,1}
\mathscr{D}_{\pm m}(k,s) := \det \big( I + 
\mathscr{K}_{\pm m}(k,s) \big),
\end{equation}
where $\mathscr{K}_{\pm m}(k,s)$ are 
finite-rank operators analytic with respect 
to $k$ such that
\begin{equation}
\small{\textup{rank} \hspace{0.6mm} 
\mathscr{K}_{\pm m}(k,s) = 
\mathcal{O} \Big( \textup{Tr} \hspace{0.4mm} 
\one_{(s,\infty)} \big( p 
\textbf{\textup{V}}_{\pm m} p \big) + 1 \Big)},
\quad \left\Vert \mathscr{K}_{\pm m}(k,s) 
\right\Vert = \mathcal{O} \left( s^{-1} \right),
\end{equation}
uniformly with respect to $s < \vert k \vert < 
s_{0}$. 

$\textup{\textbf{(ii)}}$ Moreover, if 
$z_{\pm m}(k_{0}) \in 
{\rm sp}\,_{\textup{\textbf{disc}}}^+ \big( 
D_m(b,V) \big)$ near $\pm m$, then
\begin{equation}\label{eq6,2}
\textup{mult} \big( z_{\pm m}(k_{0}) \big) = 
Ind_{\mathscr{C}} \hspace{0.5mm} \big( I + 
\mathscr{K}_{\pm m}(\cdot,s) \big) 
= \textup{mult}(k_{0}),
\end{equation}
where $\mathscr{C}$ is chosen as in \eqref{eq4,39},
and $\textup{mult}(k_{0})$ is the multiplicity 
of $k_{0}$ as zero of $\mathscr{D}_{\pm m}
(\cdot,s)$.

$\textup{\textbf{(iii)}}$ If $z_{\pm m}(k)$ 
verifies $\textup{dist} \big( z_{\pm m}(k),
\overline{N \big( D_m(b,V) \big)} \big) > 
\varsigma > 0$, $\varsigma = \mathcal{O}(1)$, then $I + 
\mathscr{K}_{\pm m}(k,s)$ are invertible and 
verify
$
\left\Vert \big( I + \mathscr{K}_{\pm m}(k,s) 
\big)^{-1} \right\Vert = \mathcal{O} \left( 
\varsigma^{-1} \right)
$
uniformly with respect to $s < \vert k \vert < 
s_{0}$.
\end{prop}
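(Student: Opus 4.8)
The plan is to exploit the splitting of $\mathcal{T}_{V}\big(z_{\pm m}(k)\big)$ furnished by Proposition \ref{p5,1} (and its $-m$ analogue in Remark \ref{r5,2}-\textbf{(iii)}), namely $\mathcal{T}_{V}\big(z_{m}(k)\big) = \pm\frac{i\Tilde{J}}{k}\mathscr{B}_m + \mathscr{A}_m(k)$ with $\mathscr{B}_m = K_m^\ast K_m \geq 0$ compact --- its non-zero spectrum coinciding with that of $p\textbf{\textup{V}}_{\pm m}p$ by \eqref{eq5,19} --- and $\mathscr{A}_m(\cdot)$ holomorphic in $\mathcal{D}_\pm^\ast(\epsilon)$, continuous up to $k = 0$, with values in $\sqq$. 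Under the standing hypotheses of Theorem \ref{t2,1} (in particular $\Vert V \Vert$ small), the continuity up to $0$ together with Lemmas \ref{l4,1}--\ref{l4,2} gives $\sup_{k \in \overline{\mathcal{D}_\pm^\ast(s_0)}} \Vert \mathscr{A}_m(k) \Vert = \mathcal{O}(\Vert V \Vert)$. First I would perform a spectral cut-off $\mathscr{B}_m = \mathscr{B}_m^{(s)} + \mathscr{B}_{m,(s)}$, with $\mathscr{B}_m^{(s)} := \mathscr{B}_m \one_{(s,\infty)}(\mathscr{B}_m)$ finite-rank of rank $\textup{Tr}\,\one_{(s,\infty)}\big( p\textbf{\textup{V}}_{\pm m}p \big)$ and $\mathscr{B}_{m,(s)} := \mathscr{B}_m \one_{(0,s]}(\mathscr{B}_m)$ of norm $\leq s$, so that, setting $\mathscr{E}_{\pm m}(k,s) := I + \mathscr{A}_m(k) \pm \frac{i\Tilde{J}}{k} \mathscr{B}_{m,(s)}$, one has $I + \mathcal{T}_{V}\big(z_m(k)\big) = \mathscr{E}_{\pm m}(k,s) \pm \frac{i\Tilde{J}}{k} \mathscr{B}_m^{(s)}$ (and by Remark \ref{r2,1}-\textbf{(iii)},\textbf{(iv)} the relevant $k$ are exactly those in $\mathcal{D}_\pm^\ast(\epsilon)$).

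The heart of the matter is that $\mathscr{E}_{\pm m}(k,s)$ is invertible with $\Vert \mathscr{E}_{\pm m}(k,s)^{-1} \Vert = \mathcal{O}(1)$, uniformly for $s < \vert k \vert < s_0$: its non-identity part has norm at most $\Vert \mathscr{A}_m(k) \Vert + s/\vert k \vert$, which is $< 1$ once $\Vert V \Vert$ is small and $\vert k \vert$ stays comfortably above $s$ (one may take the cut-off level a fixed fraction of $s$, which inflates the rank only by a bounded factor absorbed into the $\mathcal{O}(\cdot)$'s), so a Neumann series applies. Hence
\[
I + \mathcal{T}_{V}\big(z_m(k)\big) = \mathscr{E}_{\pm m}(k,s)\big( I + \mathscr{K}_{\pm m}(k,s) \big), \qquad \mathscr{K}_{\pm m}(k,s) := \pm \frac{i}{k}\, \mathscr{E}_{\pm m}(k,s)^{-1}\Tilde{J}\mathscr{B}_m^{(s)},
\]
where $\mathscr{K}_{\pm m}(\cdot,s)$ is holomorphic, finite-rank with $\textup{rank}\,\mathscr{K}_{\pm m}(k,s) \leq \textup{rank}\,\mathscr{B}_m^{(s)} = \mathcal{O}\big( \textup{Tr}\,\one_{(s,\infty)}( p\textbf{\textup{V}}_{\pm m}p ) + 1 \big)$ and $\Vert \mathscr{K}_{\pm m}(k,s) \Vert \leq \vert k \vert^{-1}\Vert \mathscr{E}_{\pm m}(k,s)^{-1}\Vert\, \Vert \mathscr{B}_m \Vert = \mathcal{O}(s^{-1})$. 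Since $\mathscr{E}_{\pm m}(k,s)$ is invertible and differs from $I$ by an element of $\sqq$, $\textup{det}_{\lceil q \rceil}\big( \mathscr{E}_{\pm m}(k,s) \big) \neq 0$; then the multiplicativity of the $q$-regularized determinant and its relation to the ordinary determinant on finite-rank perturbations (Appendix A) show that $\textup{det}_{\lceil q \rceil}\big( I + \mathcal{T}_{V}(z_m(k)) \big)$ and $\mathscr{D}_{\pm m}(k,s) := \det\big( I + \mathscr{K}_{\pm m}(k,s) \big)$ vanish at the same points with the same orders. Combined with \eqref{eq4,38} and Proposition \ref{p4,1} this proves \textbf{(i)}; the case of $-m$ follows \emph{verbatim} from Remark \ref{r5,2}-\textbf{(iii)}.

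For \textbf{(ii)}, Proposition \ref{p4,1} gives $\textup{mult}\big( z_{\pm m}(k_0) \big) = Ind_{\mathscr{C}}\big( I + \mathcal{T}_{V}(z_{\pm m}(\cdot)) \big)$; factoring through $\mathscr{E}_{\pm m}(\cdot,s)$, which is holomorphic and invertible on a neighbourhood of $\mathscr{C}$ and so contributes nothing to the index, yields $\textup{mult}\big( z_{\pm m}(k_0) \big) = Ind_{\mathscr{C}}\big( I + \mathscr{K}_{\pm m}(\cdot,s) \big)$, and for a finite-rank perturbation this index equals $ind_{\mathscr{C}}\mathscr{D}_{\pm m}(\cdot,s) = \textup{mult}(k_0)$ (Appendix B, \eqref{eqa,1}--\eqref{eqa,2}). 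For \textbf{(iii)}, the hypothesis $\textup{dist}\big( z_{\pm m}(k), \overline{N\big( D_m(b,V) \big)} \big) > \varsigma$ places $z_{\pm m}(k)$ in the resolvent set with $\big\Vert \big( D_m(b,V) - z_{\pm m}(k) \big)^{-1} \big\Vert \leq \varsigma^{-1}$ (see \cite[Lemma 9.3.14]{dav}); by \eqref{eq4,38} and Property \textbf{b)} of Section \ref{s3,1} this forces $\textup{det}_{\lceil q \rceil}\big( I + \mathcal{T}_{V}(z_{\pm m}(k)) \big) \neq 0$, hence $I + \mathcal{T}_{V}(z_{\pm m}(k))$ invertible (being $I$ plus a compact operator), and a Birman--Schwinger-type manipulation expressing $\big( I + \mathcal{T}_{V}(z_{\pm m}(k)) \big)^{-1}$ through the resolvent of $D_m(b,V)$ at $z_{\pm m}(k)$ gives $\big\Vert \big( I + \mathcal{T}_{V}(z_{\pm m}(k)) \big)^{-1} \big\Vert = \mathcal{O}(\varsigma^{-1})$; since $I + \mathscr{K}_{\pm m}(k,s) = \mathscr{E}_{\pm m}(k,s)^{-1}\big( I + \mathcal{T}_{V}(z_{\pm m}(k)) \big)$ with $\Vert \mathscr{E}_{\pm m}(k,s) \Vert = \mathcal{O}(1)$, we conclude $\big\Vert \big( I + \mathscr{K}_{\pm m}(k,s) \big)^{-1} \big\Vert = \mathcal{O}(\varsigma^{-1})$.

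The main obstacle is precisely the uniform invertibility together with the $\mathcal{O}(1)$ bound on $\mathscr{E}_{\pm m}(k,s)^{-1}$ over the whole punctured region $\{ s < \vert k \vert < s_0 \} \cap \mathcal{D}_\pm^\ast(\epsilon)$: this rests on the uniform smallness of $\Vert \mathscr{A}_m(k) \Vert$ up to $k = 0$ --- which ultimately comes from the limiting-absorption estimates underlying Proposition \ref{p5,1}, hence from the decay conditions $q \geq 4$, $\beta > 3$ in Assumption $1.1$ --- and on a slightly delicate balancing of $\Vert \mathscr{B}_{m,(s)} \Vert / \vert k \vert$ against $1$ near the inner radius $\vert k \vert = s$, which I would handle by placing the spectral cut-off at a level that is a fixed fraction of $s$.
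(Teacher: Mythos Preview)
Your argument is correct and structurally the same as the paper's, but there is one substantive difference. You obtain the uniform invertibility of $\mathscr{E}_{\pm m}(k,s)=I+\mathscr{A}_{\pm m}(k)\pm\frac{i\Tilde{J}}{k}\mathscr{B}_{m,(s)}$ by forcing $\Vert\mathscr{A}_{\pm m}(k)\Vert$ to be uniformly small through the hypothesis $\Vert V\Vert$ small, which you borrow from Theorem~\ref{t2,1}. The paper does not need this: using only the continuity of $k\mapsto\mathscr{A}_{\pm m}(k)$ in $\sqq$ on $\overline{\mathcal{D}_\pm^\ast(s_0)}$, it first writes $\mathscr{A}_{\pm m}(k)=\mathscr{A}_{0,\pm m}+\widetilde{\mathscr{A}}_{\pm m}(k)$ with $\mathscr{A}_{0,\pm m}$ a \emph{fixed} finite-rank operator and $\Vert\widetilde{\mathscr{A}}_{\pm m}(k)\Vert<\tfrac14$ for $\vert k\vert\le s_0$, and then absorbs $\mathscr{A}_{0,\pm m}$ into $\mathscr{K}_{\pm m}(k,s)$ together with $\pm\frac{i\Tilde{J}}{k}\mathscr{B}_{\pm m}\one_{(\frac12 s,\infty)}(\mathscr{B}_{\pm m})$. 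The invertible factor then carries only $\widetilde{\mathscr{A}}_{\pm m}(k)$ and the low spectral piece of $\mathscr{B}_{\pm m}$, so the Neumann series runs without any constraint on the size of $V$. This is exactly where the ``$+\,1$'' in the rank estimate of the statement comes from, and it makes Proposition~\ref{p6,1} self-contained rather than dependent on the smallness assumption of Theorem~\ref{t2,1}. Your construction yields the slightly cleaner $\mathscr{K}_{\pm m}(k,s)=\pm\frac{i}{k}\mathscr{E}_{\pm m}(k,s)^{-1}\Tilde{J}\mathscr{B}_m^{(s)}$ of rank exactly $\textup{Tr}\,\one_{(s,\infty)}(p\textbf{\textup{V}}_{\pm m}p)$, which is of course sufficient for the application. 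Apart from this, and the inessential left/right order of the factorisation, your treatment of parts \textbf{(ii)} and \textbf{(iii)} matches the paper's.
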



\begin{proof}
\textbf{(i)-(ii)} By Proposition \ref{p5,1} 
and item \textbf{(iii)} of Remark 
\ref{r5,2}, the operator-valued functions 
\begin{equation}
k \mapsto \mathscr{A}_{\pm m}(k) \in 
\sqq \big( L^2(\brrr^3) \big)
\end{equation}
are continuous near zero. Then, for $s_{0}$ 
small enough, there exists $\mathscr{A}_{0,\pm m}$ 
finite-rank operators which do not depend on 
$k$, and $\widetilde{\mathscr{A}}_{\pm m}(k) \in 
\sqq \big( L^2(\brrr^3) \big)$ continuous near zero 
satisfying $\Vert \widetilde{\mathscr{A}}_{\pm m}(k) 
\Vert < \frac{1}{4}$ for $\vert k \vert \leq s_{0}$, 
such that
\begin{equation}
\mathscr{A}_{\pm m}(k) = \mathscr{A}_{0,\pm m} + 
\widetilde{\mathscr{A}}_{\pm m}(k).
\end{equation}
Let $\mathscr{B}_{\pm m}$ be the operators defined
respectively by \eqref{eq5,18} and \eqref{eq5,20}.
Then, with the help of the decomposition 
\begin{equation}\label{eq6,3}
\mathscr{B}_{\pm m} = \mathscr{B}_{\pm m} 
\one_{[0,\frac{1}{2}s]} (\mathscr{B}_{\pm m}) 
+ \mathscr{B}_{\pm m} \one_{]\frac{1}{2}s,\infty[} 
(\mathscr{B}_{\pm m}),
\end{equation}
and using that $\left\Vert \pm \frac{i\Tilde{J}}
{k} \mathscr{B}_{\pm m} \one_{[0,\frac{1}{2}s]} 
(\mathscr{B}_{\pm m}) + 
\widetilde{\mathscr{A}}_{\pm m}(k) \right\Vert < 
\frac{3}{4}$ for $0 < s < \vert k \vert < s_{0}$, 
we obtain
\begin{equation}\label{eq6,4}
\begin{split}
\Big( I + \mathcal{T}_{V}\big( 
z_{\pm m}(k) \big) \Big) = \big( I & + 
\mathscr{K}_{\pm m}(k,s) \big) \\
& \times \left( I \pm 
\frac{i\Tilde{J}}{k} 
\mathscr{B}_{\pm m} \one_{[0,\frac{1}{2}s]} 
(\mathscr{B}_{\pm m}) + 
\widetilde{\mathscr{A}}_{\pm m}(k) \right),
\end{split}
\end{equation}
where
\begin{equation}\label{eq6,5}
\begin{split}
\mathscr{K}_{\pm m}(k,s) := \Bigg( \pm 
\frac{i\Tilde{J}}{k} & \mathscr{B}_{\pm m} 
\one_{]\frac{1}{2}s,\infty[} (\mathscr{B}_{\pm m}) 
+ \mathscr{A}_{0,\pm m} \Bigg) \\
& \left( I \pm 
\frac{i\Tilde{J}}{k} \mathscr{B}_{\pm m} 
\one_{[0,\frac{1}{2}s]} (\mathscr{B}_{\pm m}) 
+ \widetilde{\mathscr{A}}_{\pm m}(k) \right)^{-1}.
\end{split}
\end{equation}
Observe that $\mathscr{K}_{\pm m}(k,s)$ are 
finite-rank operators with ranks of order
\begin{equation}
\small{\mathcal{O} \Big( \textup{Tr} \hspace{0.4mm} 
\one_{(\frac{1}{2}s,\infty)} (\mathscr{B}_{\pm m}) 
+ 1 \Big) = \mathcal{O} \Big( \textup{Tr} 
\hspace{0.4mm} \one_{(s,\infty)} 
\big( p \textbf{\textup{V}}_{\pm m} p \big) + 1 
\Big)},
\end{equation}
according to \eqref{eq5,19}. Moreover, their norms 
are of order
$\mathcal{O} \big( \vert k \vert^{-1} \big) = 
\mathcal{O} \big( s^{-1} \big)$. Since from 
above we know that for $0 < s < \vert k \vert < 
s_{0}$ we have the bound $\Vert \pm \frac{i\Tilde{J}}{k} 
\mathscr{B}_{\pm m} \one_{[0,\frac{1}{2}s]} 
(\mathscr{B}_{\pm m}) + 
\widetilde{\mathscr{A}}_{\pm m}(k)\Vert < 
\frac{3}{4} < 1$, then we obtain
\begin{equation}
\small{Ind_{\mathscr{C}} \hspace{0.5mm} \left(I 
\pm \frac{i\Tilde{J}}{k} \mathscr{B}_{\pm m} 
\one_{[0,\frac{1}{2}s]} (\mathscr{B}_{\pm m}) + 
\widetilde{\mathscr{A}}(k) \right) = 0}
\end{equation}
from \cite[Theorem 4.4.3]{goh}. Therefore, 
equalities \eqref{eq6,2} follow by applying to 
\eqref{eq6,4} the properties of the index 
of a finite meromorphic operator-valued 
function recalled in the Appendix B. Proposition 
\ref{p4,1} together with \eqref{eq6,4} 
imply that $z_{\pm m}(k)$ belongs to 
$\sigma_{\textup{\textbf{disc}}}^+ \big( 
D_m(b,V) \big)$ near $\pm m$ if and only if 
$k$ is a zero of the determinants
$\mathscr{D}_{\pm m}(k,s)$ defined by 
\eqref{eq6,1}.
\\

\textbf{(iii)} From \eqref{eq6,4}, we deduce 
that
\begin{equation}\label{eq6,6}
\begin{split}
I + \mathscr{K}_{\pm m}(k,s) = \Big( 
I & + \mathcal{T}_{V}\big( z_{\pm m}(k) \big) 
\Big)  \\
& \times \left( I + \frac{\Tilde{J}}{k} 
\mathscr{B}_{\pm m} \one_{[0,\frac{1}{2}s]} 
(\mathscr{B}_{\pm m}) + 
\widetilde{\mathscr{A}}_{\pm m}(k) \right)^{-1},
\end{split}
\end{equation}
for $0 < s < \vert k \vert < s_{0}$.
With the help of the resolvent equation, it can
be shown that
\begin{equation}
\begin{split}
\Big( I + & \Tilde{J} \vert V \vert^{1/2} 
\big( D_m(b,0) - z \big)^{-1} \vert V \vert^{1/2} 
\Big) \\
& \times \Big( I - \Tilde{J} \vert V \vert^{1/2} 
\big( D_m(b,V) - z \big)^{-1} \vert V \vert^{1/2} 
\Big) = I.
\end{split}
\end{equation}
Then, for $z_{\pm m}(k) \in \rho \big( D_m(b,V) \big)$, 
obviously we have
\begin{equation}
\Big( I + \mathcal{T}_{V} \big( z_{\pm m}(k) \big) 
\Big)^{-1} = I - \Tilde{J} \vert V \vert^{1/2} 
\big( D_m(b,V) - z_{\pm m}(k) 
\big)^{-1} \vert V \vert^{1/2}.
\end{equation}
This together with \eqref{eq6,6} imply the 
invertibility of $I + \mathscr{K}_{\pm m}(k,s)$ 
for $0 < s < \vert k \vert < s_{0}$, and according
to \cite[Lemma 9.3.14]{dav} its verifies
\begin{align*}
\left\Vert \big( I + \mathscr{K}_{\pm m}(k,s) 
\big)^{-1} \right\Vert & = \mathcal{O} \Big( 1 + 
\left\Vert \vert V \vert^{1/2} \big( D_m(b,V) - 
z_{\pm m}(k) \big)^{-1} \vert W \vert^{1/2} 
\right\Vert \Big) \\
& = \mathcal{O} \left( 1 + \textup{dist} \big( 
z_{\pm m}(k),\overline{N \big( D_m(b,V) \big)} 
\big)^{-1} \right) \\
& = \mathcal{O} \left( \varsigma^{-1} \right),
\end{align*}
for $\textup{dist} \big( z_{\pm m}(k),
\overline{N \big( D_m(b,V) \big)} \big) > 
\varsigma > 0$, $\varsigma = \mathcal{O}(1)$. This 
completes the proof.
\end{proof}

\noindent
\textit{End of the proof of Theorem \ref{t2,1}:} 
Now, from item \textbf{(i)} of Proposition \ref{p6,1}, 
we obtain for $0 < s < \vert k \vert < s_{0}$
\begin{equation}\label{eq6,7}
\begin{aligned}
\mathscr{D}_{\pm m}(k,s) & = 
\prod_{j=1}^{\mathcal{O} \big( \textup{Tr} 
\hspace{0.4mm} \one_{(s,\infty)} 
(p \textbf{\textup{V}}_{\pm m} p) + 1 \big)} 
\big{(} 1 + \lambda_{j,\pm m}(k,s) \big{)} \\
& = \mathcal{O}(1) \hspace{0.5mm} \textup{exp} 
\hspace{0.5mm} \Big( \mathcal{O} \big( 
\textup{Tr} \hspace{0.4mm} \one_{(s,\infty)} 
\big( p \textbf{\textup{V}}_{\pm m} p \big) 
+ 1 \big) \vert \ln s \vert \Big),
\end{aligned}
\end{equation}
where the $\lambda_{j,\pm m}(k,s)$ are the 
eigenvalues of $\mathscr{K}_{\pm m} := 
\mathscr{K}_{\pm m}(k,s)$ which satisfy 
$\vert \lambda_{j,\pm m}(k,s) \vert = 
\mathcal{O} \left( s^{-1} \right)$ so that
$\ln \big \vert 1 + \lambda_{j,\pm m}(k,s) \big 
\vert = \mathcal{O} \big( \vert \ln s \vert \big)$ 
(for $s$ small enough). Otherwise, 
we have for $0 < s < \vert k \vert < s_{0}$
$$
\mathscr{D}_{\pm m}(k,s)^{-1} = \det \big( 
I + \mathscr{K}_{\pm m} \big)^{-1} = \det 
\left( I - \mathscr{K}_{\pm m} ( I + 
\mathscr{K}_{\pm m})^{-1} \right)
$$
if $\textup{dist} \big( z_{\pm m}(k),\overline{N 
\big( D_m(b,V) \big)} \big) > \varsigma > 0$. 
Then, as in \eqref{eq6,7}, it can be shown 
that
\begin{equation}\label{eq6,8}
\small{\vert \mathscr{D}_{\pm m}(k,s) \vert 
\geq C \hspace{0.5mm} \textup{exp} \hspace{0.5mm} 
\Big( - C \big( \textup{Tr} \hspace{0.4mm} 
\one_{(s,\infty)} 
\big( p \textbf{\textup{V}}_{\pm m} p \big) + 
1 \big) \big( \vert \textup{ln} \hspace{0.5mm} 
\varsigma \vert + \vert \textup{ln} \hspace{0.5mm} 
s \vert \big) \Big)},
\end{equation}
so that for $ms^2 < \varsigma < 16ms^2$, $0 < s \ll 1$, we obtain
\begin{equation}\label{eq5,70}
- \ln \, \vert \mathscr{D}_{\pm m}(k,s) \vert \leq
C \, \textup{Tr} \hspace{0.4mm} \one_{(s,\infty)} 
\big( p \textbf{\textup{V}}_{\pm m} p \big) \vert \textup{ln} \hspace{0.5mm} s \vert
+ \mathcal{O}(1).
\end{equation}
To conclude, we need the following Jensen 
lemma (see for instance \cite[Lemma 6]{bon} for
a simple proof).

\begin{lem}\label{la,1} 
Let $\Delta$ be a simply connected 
sub-domain of $\mathbb{C}$ and let $g$ be holomorphic 
in $\Delta$ with continuous extension 
to $\overline{\Delta}$. Assume that there exists 
$\lambda_{0} \in \Delta$ such that $g(\lambda_{0}) \neq 0$ 
and $g(\lambda) \neq 0$ for $\lambda\in \partial \Delta$
(the boundary of $\Delta$). 
Let $\lambda_{1}, \lambda_{2}, \ldots, \lambda_{N} \in \Delta$ 
be the zeros of $g$ repeated according to their multiplicity. 
For any domain $\Delta' \Subset \Delta$, there exists 
$C' > 0$ such that $N(\Delta',g)$, the number of zeros 
$\lambda_{j}$ of $g$ contained in $\Delta'$ satisfies
\begin{equation}\label{eqa,5}
N(\Delta',g) \leq C' \left( \int_{\partial \Delta} 
\textup{ln} \vert g(\lambda) \vert d\lambda 
- \textup{ln} \vert g(\lambda_{0}) \vert  \right).
\end{equation}
\end{lem}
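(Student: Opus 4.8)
The plan is to obtain Lemma~\ref{la,1} from the Poisson--Jensen formula on $\Delta$ together with the positivity of the Green's function. First I would fix the analytic set-up. Since $g(\lambda_0)\neq 0$, the function $g$ does not vanish identically, so its zeros $\lambda_1,\lambda_2,\dots$ are isolated and can accumulate only on $\partial\Delta$; in particular only finitely many of them lie in a given $\Delta'\Subset\Delta$, so $N(\Delta',g)<\infty$ and what has to be proved is the quantitative estimate \eqref{eqa,5}. The function $u:=\ln|g|$ is subharmonic on $\Delta$ and, because $g$ is continuous and non-vanishing on $\partial\Delta$, extends continuously to $\overline{\Delta}$; its Riesz measure is $\mu=\sum_{j}\delta_{\lambda_j}$, the zeros being repeated according to multiplicity.

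The key step is the Poisson--Jensen identity for the domain $\Delta$. Denoting by $G_{\Delta}(\cdot,\cdot)\ge 0$ the Green's function of $\Delta$ and by $\omega_{\lambda_0}$ the harmonic measure of $\Delta$ viewed from $\lambda_0$, the Riesz decomposition of $u$ (namely $u=-\int_{\Delta}G_{\Delta}(\cdot,w)\,d\mu(w)+h_u$, where $h_u$ is the least harmonic majorant of $u$, whose boundary values are $u|_{\partial\Delta}$), evaluated at $\lambda_0$, gives
\begin{equation}
\sum_{j}G_{\Delta}(\lambda_0,\lambda_j)=\int_{\partial\Delta}\ln|g(\lambda)|\,d\omega_{\lambda_0}(\lambda)-\ln|g(\lambda_0)|.
\end{equation}
If one prefers to avoid potential theory on a general domain, the same identity follows by choosing a Riemann map $\psi$ of $\Delta$ onto the unit disc with $\psi^{-1}(\lambda_0)=0$, transporting $g$ to $h:=g\circ\psi$, and applying the classical Jensen formula on $\mathbb{D}$; this is legitimate here since in all the applications $\partial\Delta$ is a piecewise-smooth Jordan curve, so $\psi$ and $\psi^{-1}$ extend continuously to the closures.

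Finally I would extract the counting function from the left-hand side. The function $G_{\Delta}(\lambda_0,\cdot)$ is positive and continuous on $\Delta\setminus\{\lambda_0\}$ and tends to $+\infty$ at $\lambda_0$, while $\overline{\Delta'}$ is a compact subset of $\Delta$; hence $c_0:=\inf_{\overline{\Delta'}\setminus\{\lambda_0\}}G_{\Delta}(\lambda_0,\cdot)>0$ (choose a small ball $B$ around $\lambda_0$ on which $G_{\Delta}(\lambda_0,\cdot)\ge 1$, and use continuity and positivity on the compact set $\overline{\Delta'}\setminus B$). Since every $\lambda_j$ differs from $\lambda_0$, it follows that $c_0\,N(\Delta',g)\le\sum_{\lambda_j\in\Delta'}G_{\Delta}(\lambda_0,\lambda_j)\le\sum_{j}G_{\Delta}(\lambda_0,\lambda_j)$, and combining with the displayed identity yields $N(\Delta',g)\le c_0^{-1}\bigl(\int_{\partial\Delta}\ln|g|\,d\omega_{\lambda_0}-\ln|g(\lambda_0)|\bigr)$, which is \eqref{eqa,5} once the boundary integral is read against the harmonic measure $\omega_{\lambda_0}$ (equivalently, against normalized arc length when $\Delta$ is a disc and $\lambda_0$ its centre, as in the intended use). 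I expect the one point genuinely requiring care to be this last identification — the precise reference measure on $\partial\Delta$ and, if literal arc length is wanted, the comparison of $\omega_{\lambda_0}$ with it, which needs some regularity of $\partial\Delta$; the quantitative gain $c_0>0$, and the fact that it persists even when $\lambda_0\in\overline{\Delta'}$, is the other place where the argument has to be run carefully.
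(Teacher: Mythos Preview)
The paper does not give its own proof of this lemma; it merely cites \cite[Lemma~6]{bon} (Bony--Bruneau--Raikov). Your argument---Poisson--Jensen on $\Delta$ (or, equivalently, Riemann-map to the disc and apply the classical Jensen formula), then bound the Green's function from below on $\overline{\Delta'}$---is precisely the approach taken there, so the proposal is correct and aligned with the cited proof. Your closing caveat is well placed: in \cite{bon} the boundary integral is written against the transported normalized measure on the circle (i.e.\ harmonic measure), which is how the $d\lambda$ in \eqref{eqa,5} should be read; comparison with literal arc length would indeed require boundary regularity and, as you note, separate constants in front of the two terms.
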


\noindent
Consider the sub-domains 
$$
\widetilde{\Delta}_\pm := 
\left\lbrace \frac{1}{2}r < \vert k \vert < 2r : \vert \re(k) \vert > \sqrt{\nu} : 
\vert \im(k) \vert > \sqrt{\nu} : 0 < \nu \ll 1\right\rbrace
\cap \mathcal{D}_\pm^\ast(\epsilon),
$$ 
with $0 < r < \sqrt{\frac{\Vert V \Vert(1 - \gamma)}{2m}} < \sqrt{\frac{3}{2}}r$, 
$\gamma = \frac{1}{2}$. Now, observe that the numerical range of $D_m(b,V)$ verifies
\begin{equation}
N \big( D_m(b,V) \big) \subseteq \big\lbrace z \in \bc : \vert \im(z) 
\vert \leq \Vert V \Vert \big\rbrace,
\end{equation}
so that there exists some value $k_{0} \in \widetilde{\Delta}_\pm /r$ 
which satisfies 
\begin{equation}
\textup{dist} \big( z_{\pm m}(rk_0),\overline{N \big( D_m(b,V) \big)} 
\big) \geq \varsigma > mr^2, \quad \varsigma < 16mr^2.
\end{equation}
Then, we get that the first sum in the LHS of \eqref{eq2,15} is bounded by 
the RHS by using Lemma \ref{la,1} with the functions
$g = g_{\pm m}(k) := \mathscr{D}_{\pm m}(rk,r)$, $k \in \widetilde{\Delta}_\pm /r$,
together with \eqref{eq6,7} and \eqref{eq5,70}.
This concludes the proof of Theorem \ref{t2,1}.

\subsection{Proof of Theorem \ref{t2,3}}\label{s8}

It will only be given for the case 
${\rm Arg}\, \Phi \in (0,\pi)$. To prove the 
case ${\rm Arg}\, \Phi \in -(0,\pi)$, it 
suffices to argue similarly by replacing 
$k$ by $-k$.
\\

\noindent
Remark \ref{r2,2}-\textbf{(i)}, together with 
Proposition \ref{p5,1} and Remark 
\ref{r5,2}-\textbf{(ii),(iii)}, imply that
\begin{equation}\label{eq8,1}
\mathcal{T}_{\varepsilon V} \big( z_{\pm m}(k) 
\big) = \frac{iJ\varepsilon \Phi}{k} 
\mathscr{B}_{\pm m} + \varepsilon 
\mathscr{A}_{\pm m}(k), \qquad k \in 
\mathcal{D}_\pm^\ast(\epsilon),
\end{equation}
where $\mathscr{B}_{\pm m}$ are positive 
self-adjoint operators which do not depend 
on $k$, and $\mathscr{A}_{\pm m}(k) \in 
\sqq \big( L^{2}(\brrr^3) \big)$ are 
holomorphic in $\mathcal{D}_{\pm}^\ast(\epsilon)$ 
and continuous on 
$\overline{\mathcal{D}_{\pm}^\ast(\epsilon)}$. 
Noting that
\begin{equation}
I + \frac{iJ \varepsilon \Phi}{k} 
\mathscr{B}_{\pm m} = \frac{iJ\Phi}{k} 
(\varepsilon \mathscr{B}_{\pm m} - iJk 
\Phi^{-1}),
\end{equation}
it is easy to see that $I + 
\frac{iJ \varepsilon \Phi}{k}\mathscr{B}_{\pm m}$ 
are invertible whenever $iJk \Phi^{-1} \notin 
{\rm sp}\, (\varepsilon \mathscr{B}_{\pm m})$.
Moreover, we have
\begin{equation}\label{eq7,2}
\small{\left\Vert \left( I + \frac{iJ \varepsilon 
\Phi}{k} \mathscr{B}_{\pm m} \right)^{-1} 
\right\Vert \leq \frac{\vert k \Phi^{-1} \vert}{\sqrt{\big( 
J\im(k\Phi^{-1}) \big)_+^2 + \vert \re(k\Phi^{-1}) 
\vert^2}}}.
\end{equation}
Therefore,
\begin{equation}\label{eq8,2}
\small{\left\Vert \left( I + \frac{iJ\varepsilon 
\Phi}{k} \mathscr{B}_{\pm m} \right)^{-1} 
\right\Vert \leq \sqrt{1 + \delta^{-2}}}
\end{equation}
for $k \in \Phi \mathcal{C}_\delta(J)$, uniformly 
with respect to $0 < \vert k \vert < r_0$. 
Then, we deduce from \eqref{eq8,1} that
\begin{equation}\label{eq8,3}
\small{I + \mathcal{T}_{\varepsilon V} 
\big( z_{\pm m}(k) \big) = \big( I + A_{\pm m}(k) 
\big) \left( I + \frac{iJ\varepsilon\Phi}{k} 
\mathscr{B}_{\pm m} \right),}
\end{equation}
where
\begin{equation}\label{eq8,4}
\small{A_{\pm m}(k) := \varepsilon \mathscr{A}_{\pm m} 
(k)\left( I + \frac{iJ\varepsilon \Phi}{k} 
\mathscr{B}_{\pm m} \right)^{-1}} \in \sqq 
\big( L^{2}(\brrr^3) \big).
\end{equation}
Now, by exploiting the continuity of 
$\mathscr{A}_{\pm m}(k) \in \sqq \big( L^{2}(\brrr^3) 
\big)$ near $k = 0$, it can be proved that 
$\Vert \mathscr{A}_{\pm m}(k) \Vert \leq C$ for some 
$C > 0$ constant (not depending on $k$). This together 
with \eqref{eq8,2} and \eqref{eq8,4} imply clearly 
the invertibility of 
$I + \mathcal{T}_{\varepsilon V} \big( z_{\pm m}(k) 
\big)$ for $k \in \Phi \mathcal{C}_\delta(J)$
and $\varepsilon < \big( C \sqrt{1 + \delta^{-2}} 
\big)^{-1}$. Thus, $z_{\pm m}(k)$ is not a discrete 
eigenvalue near $\pm m$.

\subsection{Proof of Theorem \ref{t2,4}}

Denote $(\mu_j^{\pm m})_j$ the sequences of 
the decreasing non-zero eigenvalues of 
$p\textbf{\textup{W}}_{\pm m}p$ taking 
into account the multiplicity. If Assumption 
$2.1$ holds, it can be proved that there 
exists a constant $\nu_{\pm m} > 0$ such 
that
\begin{equation}\label{eq8,16}
\# \big\lbrace j : \mu_j^{\pm m} - 
\mu_{j+1}^{\pm m} > \nu_{\pm m} \mu_j^{\pm m} 
\big\rbrace = \infty.
\end{equation}
Since $\mathscr{B}_{\pm m}$ and 
$p\textbf{\textup{W}}_{\pm m}p$ have the 
same non-zero eigenvalues, then, there exists 
a decreasing sequence $(r_\ell^{\pm m})_{\ell}$, 
$r_\ell^{\pm m} \searrow 0$
with $r_\ell^{\pm m} > 0$, such that
\begin{equation}\label{eq8,17}
\textup{dist} \big( r_\ell^{\pm m},
{\rm sp}\, (\mathscr{B}_{\pm m}) \big) \geq 
\frac{\nu r_\ell^{\pm m}}{2}, \quad \ell \in \bn.
\end{equation}
Furthermore, there exists paths
$\widetilde{\Sigma}_\ell^{\pm m} := \partial 
\Lambda_\ell^{\pm m}$ with
\begin{equation}\label{eq8,181}
\small{\Lambda_\ell^{\pm m} := \big\lbrace \Tilde{k} 
\in \bc : 0 < \vert \Tilde{k} \vert < r_0 : 
\vert \im(\Tilde{k}) \vert \leq \delta \re(\Tilde{k}) 
: r_{\ell + 1}^{\pm m} \leq \re(\Tilde{k}) 
\leq r_\ell^{\pm m} 
\big\rbrace},
\end{equation}
(see Figure 5.1) enclosing the eigenvalues 
of $\mathscr{B}_{\pm m}$ lying in
$[r_{\ell + 1}^{\pm m},r_\ell^{\pm m}]$.

\begin{figure}[h]\label{fig 4}
\begin{center}
\tikzstyle{+grisEncadre}=[dashed]
\tikzstyle{blancEncadre}=[fill=white!100]
\tikzstyle{grisEncadre}=[densely dotted]
\tikzstyle{dEncadre}=[dotted]

\begin{tikzpicture}[scale=1]

\draw [->] (0,-2) -- (0,4);
\draw [->] (-1.5,0) -- (6,0);

\draw (1.9,-0.95) -- (1.9,0.95) -- (3.5,1.75) -- (3.5,-1.75) -- cycle;

\draw [grisEncadre] (0,0) -- (1.9,0.95);
\draw [grisEncadre] (3.5,1.75) -- (5.5,2.75);

\node at (5.7,3) {$\im(\Tilde{k}) = \delta \hspace*{0.08cm} 
\re(\Tilde{k})$};
\node at (2.3,-1.5) {$\widetilde{\Sigma}_{\ell}^{\pm m}$};

\draw [->] (1.5,-0.5) -- (1.86,-0.05);
\node at (1.4,-0.7) {$r_{\ell + 1}^{\pm m}$};

\draw [->] (3.9,-0.5) -- (3.54,-0.04);
\node at (4,-0.7) {$r_{\ell}^{\pm m}$};

\node at (3.3,0) {\tiny{$\bullet$}};
\node at (3.3,0.2) {\tiny{$\mu_{j}^{\pm m}$}};

\node at (3.7,0) {\tiny{$\bullet$}};
\node at (3.9,0.2) {\tiny{$\mu_{j-1}^{\pm m}$}};

\node at (1.6,0) {\tiny{$\bullet$}};
\node at (2.2,0) {\tiny{$\bullet$}};

\node at (2.6,0) {\tiny{$\bullet$}};
\node at (2.7,0.2) {\tiny{$\mu_{j+1}^{\pm m}$}};

\node at (1.2,0) {\tiny{$\bullet$}};
\node at (1,0) {\tiny{$\bullet$}};
\node at (0.5,0) {\tiny{$\bullet$}};

\node at (4.3,0) {\tiny{$\bullet$}};
\node at (4.6,0) {\tiny{$\bullet$}};

\end{tikzpicture}
\caption{Representation of the paths 
$\widetilde{\Sigma}_\ell^{\pm m}  
= \partial \Lambda_\ell^{\pm m}$, 
$\ell \in \bn$.}
\end{center}
\end{figure}
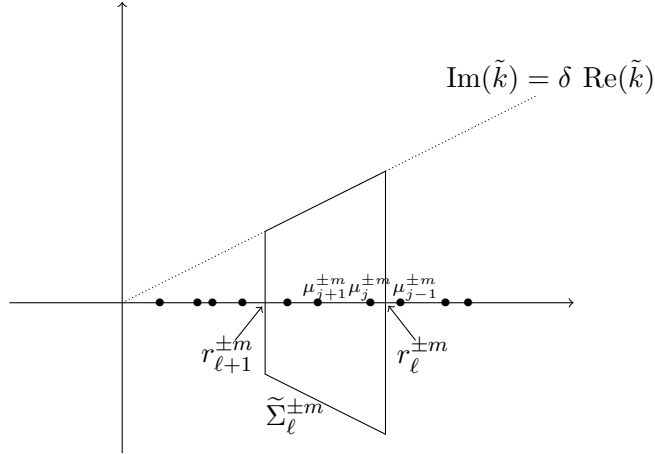

\noindent
Clearly, the operators $\Tilde{k} - 
\mathscr{B}_{\pm m}$ are invertible for 
$\Tilde{k} \in 
\widetilde{\Sigma}_{\ell}^{\pm m}$. Moreover, 
it can be easily checked that
\begin{equation}\label{eq8,18}
\big\Vert (\Tilde{k} - \mathscr{B}_{\pm m})^{-1} 
\big\Vert \leq \frac{\max \left( \delta^{-1}\sqrt{1 
+ \delta^2},(\nu/2)^{-1}\sqrt{1 + \delta^2} \right)}{\vert \Tilde{k} \vert}.
\end{equation}
Set $\Sigma_{\ell}^{\pm m} := -iJ\varepsilon 
\Phi \widetilde{\Sigma}_{\ell}^{\pm m}$.
The construction of the paths $\Sigma_{\ell}^{\pm m}$ 
together with \eqref{eq8,18} imply that
$I + \frac{iJ\varepsilon \Phi}{k} 
\mathscr{B}_{\pm m}$ are invertible for $k \in 
\Sigma_{\ell}^{\pm m}$ with
\begin{equation}\label{eq8,20}
\left\Vert \left( I + \frac{iJ\varepsilon 
\Phi}{k} \mathscr{B}_{\pm m} \right)^{-1} 
\right\Vert \leq \max \left( \delta^{-1}\sqrt{1 
+ \delta^2},(\nu/2)^{-1}\sqrt{1 + \delta^2} \right).
\end{equation}
Hence, we have
\begin{equation}\label{eq8,21}
\begin{split}
\small{I + \frac{iJ\varepsilon \Phi}{k} 
\mathscr{B}_{\pm m} + \varepsilon 
\mathscr{A}_{\pm m}(k) =} & \small{\bigg( I + 
\varepsilon \mathscr{A}_{\pm m}(k) \bigg( I + 
\frac{iJ \varepsilon \Phi}{k} 
\mathscr{B}_{\pm m} \bigg)^{-1} \bigg)} \\
& \small{\times \bigg( I + \frac{iJ\varepsilon 
\Phi}{k} \mathscr{B}_{\pm m} \bigg)},
\end{split}
\end{equation}
for $k \in \Sigma_{\ell}^{\pm m}$. Now, if we 
choose $0 < \varepsilon \leq \varepsilon_0$ small
enough and use Property \textbf{e)} given by 
\eqref{eq3,5}, we get for any $k \in 
\Sigma_{\ell}^{\pm m}$
\begin{equation}\label{eq8,22}
\small{\left\vert \textup{det}_{\lceil q \rceil} 
\left[ I + \varepsilon \mathscr{A}_{\pm m}(k) 
\left( I + \frac{iJ \varepsilon \Phi}{k} 
\mathscr{B}_{\pm m} \right)^{-1} \right] - 1 
\right\vert < 1.}
\end{equation}
Therefore, from the Rouché Theorem, we know that 
the number of zeros of $\textup{det}_{\lceil q 
\rceil} \big( I + \frac{iJ\varepsilon \Phi}{k} 
\mathscr{B}_{\pm m} + \varepsilon \mathscr{A}_{\pm m}
(k) \big)$ enclosed in $\big\lbrace z_{\pm m}(k) \in 
{\rm sp}\,_{\textup{\textbf{disc}}}^+ \big( D_m(b,
\varepsilon V) \big) : k \in \Lambda_{\ell}^{\pm m} 
\big\rbrace$ taking into account the multiplicity, 
coincides with that of 
$\textup{det}_{\lceil q \rceil} \big( I + \frac{iJ 
\varepsilon \Phi}{k} \mathscr{B}_{\pm m} \big)$
enclosed in $\big\lbrace z_{\pm m}(k) \in 
{\rm sp}\,_{\textup{\textbf{disc}}}^+ \big( D_m(b,
\varepsilon V) \big) : k \in \Lambda_{\ell}^{\pm m} 
\big\rbrace$ taking 
into account the multiplicity. The number of zeros 
of $\textup{det}_{\lceil q \rceil} \big( I + \frac{iJ 
\varepsilon \Phi}{k} \mathscr{B}_{\pm m} \big)$
enclosed in $\big\lbrace z_{\pm m}(k) \in 
{\rm sp}\,_{\textup{\textbf{disc}}}^+ \big( D_m(b,
\varepsilon V) \big) : k \in \Lambda_{\ell}^{\pm m} 
\big\rbrace$ taking into account the multiplicity 
is equal to $\textup{Tr} \hspace{0.4mm} 
\one_{[r_{\ell +1}^{\pm m},r_\ell^{\pm m}]} \big( 
p \textbf{\textup{W}}_{\pm m}p \big)$. The 
zeros of $\textup{det}_{\lceil q \rceil} \big( I + 
\frac{iJ\varepsilon \Phi}{k} \mathscr{B}_{\pm m} 
+ \varepsilon \mathscr{A}_{\pm m}(k) \big)$ are the 
discrete eigenvalues of $D_m(b,\varepsilon V)$ near 
$\pm m$ taking into account the multiplicity. Then, 
this together with Proposition \ref{p4,1} and 
Property \eqref{eqa,3} applied to \eqref{eq8,21} 
give estimate \eqref{eq2,30}. Since the sequences 
$(r_\ell^{\pm m})_\ell$ are infinite tending to zero, 
then the infiniteness of the number of the discrete 
eigenvalues claimed follows, which completes the 
proof of Theorem \ref{t2,4}.


\section{Appendix A: Reminder on Schatten-von Neumann ideals and regularized determinants}\label{s3,1}


Consider a separable Hilbert space $\mathscr{H}$. Let
$\sinf(\mathscr{H})$ denote the set of compact linear 
operators on $\mathscr{H}$, and $s_k(T)$ be the $k$-th 
singular value of $T \in \sinf(\mathscr{H})$. 
For $q \in [1,+\infty)$, the Schatten-von Neumann classes
are defined by 
\begin{equation}\label{eq3,1}
\sqq(\mathscr{H}) := \Big\lbrace T \in \sinf(\mathscr{H}) : 
\Vert T \Vert^q_\sqq := \sum_k s_k(T)^q < +\infty \Big\rbrace.
\end{equation}
When no confusion can arise, we write $\sqq$ for simplicity.
If $T \in \sqq$ with $\lceil q \rceil := \min \big\lbrace 
n \in \mathbb{N} : n \geq q \big\rbrace$, the $q$-regularized 
determinant is defined by
\begin{equation}\label{eq3,2}
\small{\textup{det}_{\lceil q \rceil} (I - T)
 := \prod_{\mu \hspace*{0.1cm} \in \hspace*{0.1cm} \sigma (T)} 
 \left[ (1 - \mu) \exp \left( \sum_{k=1}^{\lceil q \rceil-1} 
 \frac{\mu^{k}}{k} \right) \right]}.
\end{equation}
In particular, when $q = 1$, to simplify the notation 
we set
\begin{equation}
\textup{det} (I - T) := \textup{det}_{\lceil 1 \rceil}
(I - T). 
\end{equation}
Let us give (see for instance \cite{simo}) some 
elementary useful properties on this determinant.

\textbf{a)} We have $\textup{det}_{\lceil q \rceil} (I) = 1$.

\textbf{b)} If $A$, $B \in \mathscr{L} (\mathscr{H})$ with
$AB$ and $BA$ lying in $\sqq$, then
$\textup{det}_{\lceil q \rceil} (I - AB)
= \textup{det}_{\lceil q \rceil} (I - BA)$. Here, $\mathscr{L} 
(\mathscr{H})$ is the set of bounded linear operators 
on $\mathscr{H}$.

\textbf{c)} $I - T$ is an invertible operator if and only if
$\textup{det}_{\lceil q \rceil} (I - T) \neq 0$.

\textbf{d)} If $T : \Omega \longrightarrow \sqq$ is a 
holomorphic operator-valued function on a domain $\Omega$, 
then so is $\textup{det}_{\lceil q \rceil} \big( I - T(\cdot) 
\big)$ on $\Omega$.

\textbf{e)} As function on $\sqq$, $\textup{det}_{\lceil q 
\rceil} (I - T)$ is Lipschitz uniformly on balls. This means
that
\begin{equation}\label{eq3,5}
\begin{split}
\big\vert \textup{det}_{\lceil q \rceil} (I - T_1) & -
\textup{det}_{\lceil q \rceil} (I - T_2) \big\vert
\leq \Vert T_1 - T_2 \Vert_\sqq \\
& \times \exp \left( \Gamma_q \big( \Vert T_1 \Vert_\sqq + 
\Vert T_2 \Vert_\sqq + 1 \big)^{\lceil q \rceil} \right),
\end{split}
\end{equation}
\cite[Theorem 6.5]{simo}.

\section{Appendix B: On the index of a finite meromorphic operator-valued function}\label{sa,1}

We refer for instance to \cite{gohb} for the 
definition of a finite meromorphic 
operator-valued function.
\\

\noindent
Let $f : \Omega \rightarrow \bc $ be a holomorphic 
function in a vicinity of a contour $\mathscr{C}$ 
positively oriented. Then, it index with respect 
to $\mathscr{C}$ is given by
\begin{equation}\label{eqa,1}
ind_{\mathscr{C}} \hspace{0.5mm} f 
:= \frac{1}{2i\pi} \int_{\mathscr{C}}
\frac{f'(z)}{f(z)} dz.
\end{equation}
Observe that if 
$\partial \Omega = \mathscr{C}$, 
then $\textup{ind}_{\mathscr{C}} \hspace{0.5mm} f$ 
is equal to the number of zeros of $f$ lying in 
$\Omega$ taking into account their multiplicity 
(by the residues theorem).
\\

\noindent
If $D \subseteq \mathbb{C}$ is a connected domain, 
$Z \subset D$ a closed pure point subset, 
$A : \overline{D} \backslash Z \longrightarrow 
\textup{GL}(\mathscr{H})$ a finite meromorphic operator-valued 
function which is Fredholm at each point of $Z$, 
the index of $A$ with respect to the contour 
$\partial \Omega$ is given by 
\begin{equation}\label{eqa,2}
\small{Ind_{\partial \Omega} \hspace{0.5mm} A := 
\frac{1}{2i\pi} \textup{Tr} \int_{\partial \Omega} A'(z)A(z)^{-1} 
dz = \frac{1}{2i\pi} \textup{Tr} \int_{\partial \Omega} A(z)^{-1} 
A'(z) dz}.
\end{equation} 
We have the following well known properties: 
\begin{equation}\label{eqa,3}
Ind_{\partial \Omega} \hspace{0.5mm} A_{1} A_{2} = 
Ind_{\partial \Omega} \hspace{0.5mm} A_{1} + 
Ind_{\partial \Omega} \hspace{0.5mm} A_{2};
\end{equation} 
if $K(z)$ is of trace-class, then
\begin{equation}\label{eqa,4}
Ind_{\partial \Omega} \hspace{0.5mm} (I+K)= 
ind_{\partial \Omega} \hspace{0.5mm} \det \hspace{0.5mm} (I + K),
\end{equation} 
see for instance to \cite[Chap. 4]{goh} for more details.


\end{document}